\let\footnote=\endnote
\newtheorem{prop}{Proposition}[section]
\newtheorem{thm}{Theorem}[section]
\newtheorem{cor}{Corollary}[section]
\newtheorem{dfn}{Definition}[section]
\newtheorem{clm}{Claim}[section]
\newcommand{\exclude}[1]{}
\newcommand{\cB}{\mathcal B}
\newcommand{\cC}{\mathcal C}
\newcommand{\cF}{\mathcal F}
\newcommand{\cG}{\mathcal G}
\newcommand{\cL}{\mathcal L}
\newcommand{\cS}{\mathcal S}
\def\bred{\color{black}}
\begin{document}
% Outcomment only when entries are known. Otherwise leave as is and
%   default values will be used.
%\setcounter{page}{1}
%\VOLUME{00}%
%\NO{0}%
%\MONTH{Xxxxx}% (month or a similar seasonal id)
%\YEAR{0000}% e.g., 2005
%\FIRSTPAGE{000}%
%\LASTPAGE{000}%
%\SHORTYEAR{00}% shortened year (two-digit)
%\ISSUE{0000} %
%\LONGFIRSTPAGE{0001} %
%\DOI{10.1287/xxxx.0000.0000}%

% Author's names for the running heads
% Sample depending on the number of authors;
% \RUNAUTHOR{Jones}
% \RUNAUTHOR{Jones and Wilson}
% \RUNAUTHOR{Jones, Miller, and Wilson}
% \RUNAUTHOR{Jones et al.} % for four or more authors
% Enter authors following the given pattern:
%\RUNAUTHOR{Kocuk, Jeon, Dey, Linderoth, Luedtke, Sun}

% Title or shortened title suitable for running heads. Sample:
% \RUNTITLE{Bundling Information Goods of Decreasing Value}
% Enter the (shortened) title:
%\RUNTITLE{A Cycle-Based Formulation and Valid Inequalities for DC-OTS}

% Full title. Sample:
% \TITLE{Bundling Information Goods of Decreasing Value}
% Enter the full title:
\title{A Cycle-Based Formulation and Valid Inequalities for DC Power Transmission Problems with Switching}

% Block of authors and their affiliations starts here:
% NOTE: Authors with same affiliation, if the order of authors allows,
%   should be entered in ONE field, separated by a comma.
%   \EMAIL field can be repeated if more than one author
%\ARTICLEAUTHORS{%
%%	\AUTHOR{Burak Kocuk,  Hyemin Jeon, Santanu S. Dey, Jeff Linderoth, James Luedtke, Andy Sun}
%	\AUTHOR{Burak Kocuk}
%	\AFF{H. Milton Stewart School of Industrial and Systems Engineering, Georgia Institute of Technology, Atlanta, GA 30332, \EMAIL{burak.kocuk@gatech.edu}}
%	\AUTHOR{Hyemin Jeon}
%	\AFF{Department of Industrial and Systems Engineering, University of Wisconsin-Madison, Madison, WI 53706, \EMAIL{jeon5@wisc.edu}}
%	\AUTHOR{Santanu S. Dey}
%	\AFF{H. Milton Stewart School of Industrial and Systems Engineering, Georgia Institute of Technology, Atlanta, GA 30332, \EMAIL{santanu.dey@isye.gatech.edu}}
%	\AUTHOR{Jeff Linderoth, James Luedtke}
%	\AFF{Department of Industrial and Systems Engineering, University of Wisconsin-Madison, Madison, WI 53706, \EMAIL{linderoth@wisc.edu, jrluedt1@wisc.edu}}
%	\AUTHOR{Xu Andy Sun}
%	\AFF{H. Milton Stewart School of Industrial and Systems Engineering, Georgia Institute of Technology, Atlanta, GA 30332, \EMAIL{andy.sun@isye.gatech.edu}}

\author{Burak Kocuk,  Hyemin Jeon, Santanu S. Dey, \\ Jeff Linderoth, James Luedtke, Andy Sun}
	
%	\AFF{} %, \URL{}}
%	\AUTHOR{}
%	\AFF{}
	% Enter all authors
%} % end of the block

\maketitle

\abstract{%
It is well-known that optimizing network topology by switching on and
off transmission lines improves the efficiency of power delivery in
electrical networks.  In fact, the USA Energy Policy Act of 2005
(Section 1223) states that the U.S. should ``encourage, as
appropriate, the deployment of advanced transmission technologies''
including ``optimized transmission line configurations.''  As such,
many authors have studied the problem of determining an optimal set of
transmission lines to switch off to minimize the cost of meeting a
given power demand under the direct current (DC) model of power flow.
This problem is known in the literature as the {\it Direct-Current
Optimal Transmission Switching} Problem (DC-OTS).  Most research on
DC-OTS has focused on heuristic algorithms for generating quality
solutions or on the application of DC-OTS to crucial operational and
strategic problems such as contingency correction, real-time dispatch,
and transmission expansion.  The mathematical theory of the DC-OTS
problem is less well-developed.  In this work, we formally establish
that DC-OTS is NP-Hard, even if the power network is a series-parallel
graph with at most one load/demand pair.  Inspired by Kirchoff's
Voltage Law, we give a cycle-based formulation for DC-OTS, and we use
the new formulation to build a cycle-induced relaxation.  We
characterize the convex hull of the cycle-induced relaxation, and the
characterization provides strong valid inequalities that can be used
in a cutting-plane approach to solve the DC-OTS.  We give details of a
practical implementation, and we show promising computational results
on standard benchmark instances.
}%

% Sample
%\KEYWORDS{deterministic inventory theory; infinite linear programming duality;
%  existence of optimal policies; semi-Markov decision process; cyclic schedule}

% Fill in data. If unknown, outcomment the field
%\KEYWORDS{} \HISTORY{}

\section{Introduction}

An electric power grid is a complex engineered system whose control
and operation is driven by fundamental laws of physics.  The {\it
  optimal power flow} (OPF) problem is to determine a minimum cost
delivery of a given demand for power subject to the power flow
constraints implied by the network.  The standard mathematical model
of power flow uses alternating current (AC) power flow equations.  The
AC power flow equations are nonlinear and nonconvex, which has
prompted the development of a linear approximation known as the direct
current (DC) power flow equations.  DC power flow equations are widely
used in the current industry practice, and the DC OPF problem is a
building block of power systems operations planning.

One consequence of the underlying physical laws of electric power flow
is a type of ``Braess' Paradox'' where removing lines from a
transmission network may result in {\em improved} network efficiency.
\cite{oneill.et.al:05} propose exploiting this
well-known attribute of power transmission networks by switching off
lines in order to reduce generation costs.  
\cite{Fisher} formalized this notion into a mathematical optimization
problem known as the Optimal Transmission Switching (OTS) problem.  The OTS problem is the OPF problem augmented with
the additional flexibility of changing the network topology by removing transmission lines. While motivated in
\cite{oneill.et.al:05} by an operational problem in which lines may be switched off to improve efficiency, the same
mathematical switching structure appears also in longer-term transmission network expansion planning problems.

Because of the mathematical complexity induced by the AC power flow
equations, nearly all studies to date on the OTS problem have used the
DC approximation to power flow (e.g. see \cite{Fisher,Barrows12,Fuller12,Wu13}).
With this approximation, a mixed-integer linear programming (MILP)
model for DC-OTS can be created and input to existing MILP software.
In the MILP model, binary variables are used to model the changing
topology and enforce the network power flow constraints on a line if
and only if the line is present.  Models with many ``indicator
constraints'' of this form are often intractable for modern
computational integer programming software, since the linear
relaxations of the formulations are typically very weak.  Previous
authors have found this to be true for DC-OTS, and many heuristic
methods have been developed based on ranking lines (\cite{Barrows12,Fuller12,Wu13}), or by imposing an (artificial)
cardinality constraint on the number of lines that may be switched off
in a solution (\cite{Fisher}).  The solutions found from these
heuristics have demonstrated that significant efficiency
gains are possible via transmission switching.

\exclude{
As the modern transmission control and relay technologies evolve, switching on and off transmission lines has become a viable option in power system operation. It can be particularly beneficial in reducing system operational cost and improving power system reliability. One of the first models for transmission switching with DC power flow is proposed in \cite{Fisher}, where a mixed-integer linear programming (MILP) model is formulated. Off-the-shelf algorithm through CPLEX is used to solve the standard test case of IEEE 118-bus system. Cardinality constraints on the number of allowed lines to be switched are also considered, which is shown to improve the efficiency of the solution procedure.

As the MILP model proposed in the above literature is difficult to solve, different heuristic methods are developed to quickly generate feasible switching solutions. For example, in \cite{Barrows12}, the authors have proposed a pre-screening method, which estimates the effect of switching a line in terms of the congestion reduction in other transmission lines. They numerically show that there are a few such lines that can generate the most significant cost savings. The solution space thus can be considerably reduced. Another heuristic method in \cite{Fuller12} develops a metric that can help to find promising lines to switch off. In particular, they use the dual variables of the DC OPF model and a ranking procedure to find transmission lines on which energy flows from a bus with expensive locational marginal price to a bus with cheaper price. Then, the best lines which result in the largest cost reduction are found sequentially solving much simpler LP and MILPs. A recent work \cite{Wu13} also utilizes a ranking approach in a large-scale example by taking into account flow limit violations and congestion rents of transmission lines. A common feature of the above works is that the algorithms are heuristics and only suboptimal switching solutions are found.

}

The paper by \cite{coffrin.et.al:14} offers a criticism of the use
of the DC-approximation to model power flow for the OTS problem.  The
paper demonstrates that a direct application of the DC power flow
equations may {\it not} be accurate enough to recover useful AC
operation solutions in the context of the OTS.  They thus argue for
the use of the AC-power flow equations in the OTS problem.  The
authors employ a convex quadratic relaxation of the AC power flow
equations proposed in \cite{hijazi.coffrin.vanhentenryck:13} and embed the
relaxation in a branch-and-bound method to solve the AC-OTS problem.
The recent papers by \cite{barrow.blumsack.hines:14} and \cite{Soroush14} also develop
methods and heuristics for AC-OTS.

\exclude{
Transmission switch with the AC power flow model has also been studied. In \cite{Soroush14}, authors propose a new ranking heuristics based on the AC OPF solution and the corresponding dual variables. 
}

Despite the criticism of the DC power flow model for optimal
transmission switching, there are many planning problems where the DC
power flow model may be an acceptable approximation when combined with network topology
changes.  A survey in \cite{Hedman11} enumerates applications of
transmission switching to improve voltage profiles, reduce congestion
and losses in the system, and to increase reliability of the power
grid.  A standard reliability criterion for the power grid is that the
system must be able to withstand an ``$N-1$'' event---in an
interconnection network with $N$ elements, the system will operate
reliably following the failure of any one of them.  
\cite{Hedman09} augment the DC-OTS model to ensure that the
$N-1$ reliability criteria is satisfied.  Heuristics are 
used to
iteratively decide which lines to be switched off, while preserving
$N-1$ reliability for standard test cases.  The authors show that
significant cost savings from transmission switching is still possible
even if the $N-1$ contingency is required.  

Transmission switching is also an important subproblem in power grid capacity
expansion planning.  In \cite{Khodaei10}, a large MILP model is
constructed to solve an expansion planning problem with contingencies,
the DC power flow equations are used, 
and transmission switching is allowed in finding the best
configuration.  \exclude{The model is solved using Benders'
  decomposition where the master problem deals with the expansion
  decision, while two subproblems are used to generate feasibility and
  optimality cuts based on the contingency requirements and optimality
  conditions.} 
\cite{villumsen.philpott:12} develop a slightly
different model for expansion planning that also relies on the
DC-power flow approximation and transmission switching.  They employ
their method on a case study for a real power system expansion plan in
Denmark (see \cite{Villumsen13}).  In the context of expansion planning, it
is important to note that  the mathematical
structure of line addition is exactly the same as line removal.

\exclude{
In all studies for transmission expansion planning the DC equations
are used to approximate the power flow, and the mathematiocal
structure of line addition is exactly the same as in optimal
transmission switching}

\exclude{
A case study is provided in \cite{Villumsen13} about the capacity expansion problem in Denmark. The solution of a two-stage stochastic mixed-integer problem demonstrates that transmission switching not only reduces the generation cost in congested networks, but also alters the capacity expansion plan.
%, which is planning to extend wind power infrastructure considerably. 
%The problem is modelled as a two-stage stochastic mixed-integer problem, in which decisions about investments are made in the first stage and operational decisions such as power generation and switching in the second stage. The resulting model is solved using a column generation approach.  This work
}

\exclude{
Switching can be beneficial in terms of cost savings but one should
also consider the reliability side. For instance, it may happen that
when some of the lines are switched off, the remaining network may
become disconnected. This undesired situation can be prevented by
using  ``connectivity-ensuring constraints" as discussed in
\cite{Ostrowski14}. Using graph-theoretical results, they add valid
inequalities for each biconnected component and use a special
branching procedure to ensure connectivity.
}

Authors who have combined the DC-approximation to power flow with
the flexibility of modifying the network topology have found that the
resulting MILP programming model is very challenging and called for a
more systematic study on its underlying mathematical structure.   For
example,  the authors of \cite{Hedman10} state that 
\begin{quote}
``When solving the transmission switching problem, $\ldots$ the techniques for
closing the optimality gap, specifically improving the lower bound, are
largely ineffective.''
\end{quote}
A primary focus of our work is an attempt to change this reality by
developing strong classes of valid inequalities that may be applied to
power systems planning problems that involve the addition or removal
of transmission elements.  Our paper makes the following
contributions:
\begin{itemize}
\item Using a formulation based on Kirchoff's Voltage Law, we give a
cycle-based linear mixed integer programming (MIP) formulation for DC-OTS.
\item We formally establish that the DC-OTS problem is NP-Hard, even
  if the interconnection network is a series-parallel graph and there
  is only one generation/demand pair.  (Additional complexity results
  were recently independently established by
  \cite{lehmann.grastien.vanhentenryck:14}).  
\item Using the cycle-based formulation as inspiration, we derive
  classes of strong valid inequalities for a cycle--relaxation of
  the DC-OTS.  We additionally establish that inequalities define the
  convex hull of the cycle-relaxation, and we show how to separate the
  inequalities over a given cycle in polynomial time.
\item We perform computational experiments focusing on improving
  performance of integer programming-based methods for the OTS using
  the DC power flow approximation.   We show that the new 
  inequalities can help improve solution performance of commercial MIP
  software. 
\end{itemize}
{\bred Although our valid inequalities are new, they add to the wide body of literature that has made productive use of cycles in a graph to derive valid
inequalities for discrete optimization problems; see e.g., \citep{padberg:mp73,cutpolytope:86,ferreiraetal:96,cmix:05} for just a few
examples.}

\exclude{
This is a point that needs to be emphasized -- maybe put in abstract?

This formulation is used as a basis for deriving strong valid inequalities
that are applicable for any {\em any} power systems planning problem
(whether design or operations) for which a DC approximation to power
flow is sufficient for engineering purposes.

}

\exclude{  Not sure if we need the ``road map''

In this paper, we propose a new formulation for the DC OPF problem
based on Kirchoff's voltage law in Section \ref{sec:DCOPF}.  The
formulation is combined with transmission switching problem in Section
\ref{section:switch}.   We characterize the convex hull of a substructure in the formulation and develop valid inequalities that strengthen the LP relaxation of the proposed switching model.  In Section \ref{section:alg}, we give an efficient implementation of the proposed valid inequalities. We present the extensive computational results in Section \ref{section:comp}. Finally,  Section \ref{section:conc} concludes our paper with potential future research directions.

}

\exclude{
ATTIC:

The optimal power flow ሺOPFሻ problem finds the optimal solution to an
objective function subject to the power flow constraints. There are a
variety of OPF formulations with different constraints, different
objective functions, and different solution methods that have been
labeled optimal power flow.  The simplest optimal power flow model is
known as the “Direct Current ሺDCሻ OPF”. It uses a linearized
approximation of the AC power flow equations and linear constraints.
Formulations that use the AC power flow equations are known as “AC
OPF.”  The ACOPF formulations are continuous nonconvex optimization
problems without binary variables.

Calculating electrical network power flows and voltages from given input operating conditions is known as the {\it optimal power flow} (OPF) problem and is one of the fundamental numerical problems in electric power system analysis.

During the past half a century, much effort has been devoted to obtain reliable and accurate solutions to this nonconvex large-scale optimization problem. Due to the computational difficulty of the AC OPF problem, linear approximation models, the so-called DC power flow models, have been suggested and widely used in the current industry practice. The DC OPF model is a building block in many power system operation problems. One 

particular application that has recently gained considerable attention is the so-called transmission line switching problem.

}

\section{DC OPF and OTS Formulations} 
\label{sec:DCOPF}

\exclude{
}

A power network consists of a set of buses $\cB$, transmission lines $\cL$, and generators $\cG \subseteq \cB$.  We assume that the graph $G=(\cB,\cL)$ is connected.  Each line $(i,j) \in \cL$ is given an (arbitrary) orientation, with the convention that power flow in the direction from $i \rightarrow j$ is positive, while power that flows along line $(i,j)$ in the direction $j \rightarrow i$ is negative.   Each bus $i \in \cB$ has a set of adjacent buses $\delta(i) \subseteq \cB$.  We use the standard notation that $\delta^+(i) := \{j \in \cB : (i,j) \in \cL\}$ and $\delta^-(i) := \{j \in \cB : (j,i) \in \cL\}$.  The required load at each bus is given as $p_i^d$, $i \in \mathcal{B}$. In the DC model, power flow on a transmission line is proportional to the difference in phase angles of voltages at the two ends of the line.  The constant of proportionality is known as the {\it susceptance} of line $(i,j) \in \cL$, which we denote as $B_{ij}$.  Each transmission line $(i,j) \in \cL$ has an upper bound $\bar f_{ij}$ on the allowed power flow.  Finally, the power produced at each generator $i \in \cG$ is constrained to lie in the interval $[p_i^{\text{min}}, p_i^{\text{max}}]$ with an associated unit production cost of $c_i$. 
{\bred We assume that there is at most one generator at each bus.  This assumption is without loss of generality, since the models we present can be extended to the case with multiple generators by replacing the dispatch variable by the sum of different dispatch variables attached to a particular bus.} 
{\bred We also assume that the cost is a linear function of the production quantities at the generators.  This
assumption, while standard in the literature surrounding the DC-OTS \citep{Fisher,Fuller12}, ignores the convex
quadratic portion of generation costs. As we discuss in Section \ref{sec:valid}, our formulation and valid
inequalities can still be applied when the
cost is a nonlinear function.} 
% For a general convex quadratic minimization problem, the optimal solution may be interior to the feasible region.  One of our innovations is a polyhedral approach to solving DC-OTS, which one would expect to be most effective if the optimal solution occurs at the boundary of the feasible region.  Thus, the assumption of a linear objective function may appear to limit the applicability of our proposed method.  However, for the DC-OPF, the convex quadratic generation costs are typically monotone increasing in the interval  $[p_i^{\text{min}}, p_i^{\text{max}}]$, so the optimal solution will occur on the boundary of the feasible region, implying that approaches that attempt to approximate the convex hull of feasible solution may still be effective.

\subsection{Angle Formulation of DC-OPF}
The standard formulation of DC OPF has three classes of decision variables.  The variable $p_i^g$ is the power  generation at generator $i \in \cG$, and the variable $\theta_i$ is the voltage angle at bus $i \in \cB$.  The variable $f_{ij}$ represents the power flow along line $(i,j) \in \cL$.   With these decision variables, we can write a linear programming problem to minimize the generation cost of meeting power demands $p_i^d$ as follows:
\begin{subequations} \label{eq:dc-opf}
\begin{align}
  \min  &\hspace{0.5em}  \sum_{i \in \mathcal{G}} c_i p_i^g  \label{objective} \\
  \mathrm{s.t.}   &\hspace{0.5em} p_i^g-p_i^d = \sum_{j \in \delta^+(i)} f_{ij} - \sum_{j \in \delta^-(i)} f_{ji}   & i& \in \mathcal{B} \label{activeAtBus}\\
  & \hspace{0.5em} B_{ij}(\theta_i-\theta_j) = f_{ij} &(&i,j) \in \cL \label{eq:def-f}\\
  & \hspace{0.5em}   -\bar f_{ij} \le f_{ij}   \le \bar f_{ij}  &(&i,j) \in \mathcal{L} \label{powerOnArc} \\
  & \hspace{0.5em} p_i^{\text{min}}  \le p_i^g \le p_i^{\text{max}}    & i& \in \mathcal{G}. \label{activeAtGenerator}
\end{align}
\end{subequations}
By substituting the definition of power flow from \eqref{eq:def-f} into equations~\eqref{activeAtBus} and~\eqref{powerOnArc}, the $f$ variables may be projected out of the formulation. 

\exclude{
Here, we should point out  that lower and upper bounds on variables $\theta_i$ are put only for convenience. The actual restriction is on the difference of $\theta_i-\theta_j$, which cannot exceed $2\pi$. However, such a constraint is always redundant since the ratio ${ \bar f_{ij}} / {B_{ij}}$ is already much smaller than $2\pi$. 
%In fact, in an optimal solution, the angle difference for neighboring buses are near zero.
}

\subsection{Cycle Formulation of DC-OPF}
The DC power flow model gets its name from the fact that the equations describing the power flow in network are the same as those that describe current flow in a standard direct current electric network.  The constraints \eqref{activeAtBus} describe Kirchoff's Current Law (KCL) at each bus, and the equations \eqref{eq:def-f} that define the branch current follow from Ohm's Law.  With this analogy, it is natural to think about the alternative way to represent power flows in a DC circuit---using the branch current $f_{ij}$ and Kirchoff's Voltage Law (KVL).  Kirchoff's Voltage Law states that around any {\it directed} cycle $C$, the voltage differences must sum to zero:
\begin{equation}\label{eq:KVL}
\sum_{ (i,j) \in C} (\theta_i - \theta_j) = \sum_{(i,j) \in C} \frac{f_{ij}}{B_{ij}} = 0.
\end{equation}
If the directed cycle $C$ contains arc $(i,j)$, but the transmission line has been given the alternate orientation ($(j,i) \in \cL$), we adjust \eqref{eq:KVL} by flipping the sign of the susceptance:
\[ \bar{B}_{ij}^C = \left\{ \begin{array}{cl} 
B_{ij} & \mbox{if } (i,j) \in C, (i,j) \in \cL\\
-B_{ij} & \mbox{if } (i,j) \in C, (j,i) \in \cL
\end{array} \right. \]

{\bred
  Our formulation of the DC-OPF relies on the notion of a {\it cycle basis}.
\begin{dfn} \citep{hariharan}
Let $v \in {\{0,\pm1\}}^{|\cL|}$ be an incidence vector for a cycle $C$ in graph $G=(\cB, \cL)$, where
\[ v_{ij}= \left\{ \begin{array}{cl} 
1 & \mbox{if } (i,j) \text{ is traversed in the right direction by } C\\
-1 & \mbox{if } (i,j)\text{ is traversed in the opposite direction by } C \\
0 & \mbox{if } (i,j) \text{ is not in } C .
\end{array} \right. \]
The \textit{cycle space} of $G$ is the vector space that is spanned by the incidence vectors of its cycles. 
A set of cycles is called a \textit{cycle basis} if it forms a basis for this vector space.
%A set of cycles  $\mathcal{C}_b$ is called a  \textit{cycle basis} if it can be used to obtain any other cycle in the graph and minimal with this property.
\end{dfn}
A cycle basis of $G$ is then a minimal set of cycles of $G$ with the property that all cycles of $G$ are linear combinations of the cycles in the basis.
}

In this equivalent representation, the power flow should satisfy the KVL \eqref{eq:KVL} for each cyle.  Although the number of cycles in a network can be large, it is sufficient to enforce \eqref{eq:KVL} over any set of cycles that forms a cycle basis $\cC_b$ of the network.  if angle differences sum up to zero over the cycle basis, they also must sum up  to zero over any other cycle (e.g. see \cite{bollobas:02}).  Thus, the KVL-inspired formulation for the DC OPF is the following: 

\begin{subequations}
\label{eq:dc-opfkvl}
\begin{align}
  \min  &\hspace{0.5em}  \sum_{i \in \mathcal{G}} c_i p_i^g  \label{objectiveC} \\
  \mathrm{s.t.}  &\hspace{0.5em} {\bred   \eqref{activeAtBus},  \eqref{powerOnArc},  \eqref{activeAtGenerator}  } \\
  & \hspace{0.5em} \sum_{ (i,j) \in C} \frac{f_{ij}}{\bar B^C_{ij}} = 0  & C& \in \mathcal{C}_b \label{cycleBasis}.
\end{align}
\end{subequations}
The voltage angles may be recovered using the equations \eqref{eq:def-f}.

{\bred
\begin{prop}
Formulations \eqref{eq:dc-opf} and \eqref{eq:dc-opfkvl} are equivalent.
\end{prop}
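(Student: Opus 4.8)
The plan is to exploit the fact that the two formulations share the same objective \eqref{objective}, which depends only on the generation variables $p^g$, together with the same constraints \eqref{activeAtBus}, \eqref{powerOnArc}, and \eqref{activeAtGenerator}. Since \eqref{eq:dc-opf} additionally carries the angle variables $\theta$ and the defining equations \eqref{eq:def-f}, while \eqref{eq:dc-opfkvl} replaces these by the cycle constraints \eqref{cycleBasis}, it suffices to prove that the two problems have the same projection onto the $(p^g,f)$ space. Concretely, for any fixed flow vector $f$ I would show that angles $\theta$ satisfying \eqref{eq:def-f} exist if and only if $f$ satisfies \eqref{cycleBasis}. Once this equivalence of feasible $(p^g,f)$ pairs is established, equality of the optimal values is immediate because the objectives coincide, and an optimal $\theta$ for \eqref{eq:dc-opf} is obtained by the recovery step noted after \eqref{cycleBasis}.

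For the forward direction, suppose $(p^g,\theta,f)$ is feasible for \eqref{eq:dc-opf}. Dividing \eqref{eq:def-f} by $B_{ij}$ gives $f_{ij}/B_{ij}=\theta_i-\theta_j$, so along any directed cycle $C$ the quantity $\sum_{(i,j)\in C} f_{ij}/\bar B^C_{ij}$ reduces to the telescoping sum $\sum_{(i,j)\in C}(\theta_i-\theta_j)$, which vanishes; the sign bookkeeping built into $\bar B^C_{ij}$ is exactly what accounts for arcs traversed against their given orientation. In particular \eqref{cycleBasis} holds, so $(p^g,f)$ is feasible for \eqref{eq:dc-opfkvl} with identical cost.

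The backward direction is where the main obstacle lies, and where the cycle-basis machinery is needed. Assuming $(p^g,f)$ satisfies \eqref{cycleBasis}, I would first invoke the fact that the cycle-sum is a linear functional and that $\cC_b$ spans the cycle space, so that the identity \eqref{eq:KVL} then holds over \emph{every} cycle of $G$, not merely those in the basis (see \cite{bollobas:02}). To construct $\theta$, I would fix a spanning tree of the connected graph $G=(\cB,\cL)$, set the root potential to zero, and define $\theta_i$ as the signed sum of $f_{ij}/B_{ij}$ along the unique tree path from the root to bus $i$; this makes \eqref{eq:def-f} hold on every tree edge by construction. For a non-tree edge, the fundamental cycle it forms with the tree belongs to the cycle space, so \eqref{eq:KVL} applied to that cycle forces $f_{ij}/B_{ij}=\theta_i-\theta_j$ there as well. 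Hence $\theta$ satisfies \eqref{eq:def-f} on all of $\cL$, which completes the construction and the proof of equivalence.

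The delicate points to verify are that the potential $\theta$ is well defined, i.e.\ independent of the chosen path, which is precisely what \eqref{eq:KVL} holding over the entire cycle space guarantees, and that the sign convention encoded in $\bar B^C_{ij}$ (equivalently, in the incidence vector $v$ of the cycle definition) is tracked consistently between \eqref{eq:KVL} and the tree-path summation. Everything else is routine bookkeeping, since the generation and flow bounds are common to both models and play no role in the correspondence.
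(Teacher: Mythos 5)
Your proposal is correct and follows essentially the same route as the paper's proof: one direction by the telescoping sum $\sum_{(i,j)\in C}(\theta_i-\theta_j)=0$, and the other by extending KVL from the basis $\cC_b$ to all cycles, defining $\theta$ on a spanning tree, and verifying \eqref{eq:def-f} on each non-tree edge via its fundamental cycle. Your explicit root-potential construction is just a concrete instantiation of the paper's observation that the tree system \eqref{solve theta} is solvable, so the two arguments coincide in substance.
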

\begin{proof}
In order to prove the equivalence of the formulations, it suffices to show that 
\begin{equation}
  \sum_{ (i,j) \in C} \frac{f_{ij}}{\bar B^C_{ij}} = 0,  \ C \in \mathcal{C}_b\iff\text{there exists } \theta \text{ such that }f_{ij} = B_{ij}(\theta_i-\theta_j),  \ (i,j) \in \cL .
\end{equation}

($\Rightarrow$) First observe that because $\mathcal{C}_b$ is a cycle basis, equations \eqref{cycleBasis} imply  $\sum_{
(i,j) \in C} \frac{f_{ij}}{\bar B^C_{ij}} = 0$ for any cycle $C$.  Now let $\mathcal{T}
= (\mathcal{B}, \mathcal{L'})$ be a spanning tree of $G$. %which induces the cycle basis $ \mathcal{C}_b$. 
Clearly, the following system in $\theta$ has a solution:
\begin{equation}
 B_{ij}(\theta_i-\theta_j) = f_{ij},  \ (i,j) \in \cL' . \label{solve theta}
\end{equation}
Therefore, it suffices to check if a solution of \eqref{solve theta} satisfies $B_{ij}(\theta_i-\theta_j) = f_{ij}$ for $(i,j)\in\cL \setminus \cL'$. Note that for any $(k,l) \in \cL \setminus \cL' $, there exists a unique path  $\mathcal{P}_{kl}$  from $k$ to $l$ in $\mathcal{T}$ and a cycle $C = \mathcal{P}_{kl} \cup \{(l,k)\}$. Then, we have
\begin{equation}
 (\theta_k-\theta_l) =  \sum_{ (i,j) \in \mathcal{P}_{kl}}  (\theta_i-\theta_j) =  \sum_{ (i,j) \in \mathcal{P}_{kl}} \frac{f_{ij}}{\bar B^C_{ij}}  =   \frac{f_{kl}}{\bar B^C_{kl}},
\end{equation}
which implies $B_{kl}(\theta_k-\theta_l) = f_{kl}$. 
%Similarly, we can repeat this procedure for every element in $\cL \setminus \cL' $.

($\Leftarrow$) Suppose there exist $\theta$  such that $f_{ij} = B_{ij}(\theta_i-\theta_j)$, for each $(i,j) \in \cL$. Then, for each cycle $C$, we have
\begin{equation}  
\sum_{ (i,j) \in C} \frac{f_{ij}}{\bar B^C_{ij}} =   \sum_{ (i,j) \in C} \frac{ B_{ij}(\theta_i-\theta_j)  }{\bar B^C_{ij}} =   \sum_{ (i,j) \in C} (\theta_i-\theta_j) = 0,
\end{equation} 
which concludes the proof.
\end{proof}
}

\exclude{
Once the above LP is solved, a set of $\theta_i$ values can be found by solving the following feasibility problem:
 \begin{subequations}
\begin{align}
  & \hspace{0.5em}  \theta_i-\theta_j = \frac{f_{ij}}{\bar B_{ij}} &(&i,j) \in \mathcal{L} \label{angle diff} \\
  & \hspace{0.5em} -\pi  \le \theta_i \le \pi    & i& \in \mathcal{B}
\end{align}
\end{subequations}
Again, the bounds on $\theta_i$ are only for convenience. 
}
%Note that a solution to (\ref{angle diff}) exists due to (\ref{cycleBasis}), which can be proven using Farkas' Lemma.
%Assume that there does not exist a solution to  (\ref{angle diff}). Then, there exists nontrivial weights on edges, say $\lambda_{ij}$ such that ...

%

\exclude{
\subsection{DC OPF with Switching} 
\label{section:switch}

Observe that DC OPF is a special network flow problem such that a set of flows which satisfy the balance constraints may not be feasible because the flows are governed indirectly by the angles. In other words, there may not exist a set of angles for any given set of flows which respect flow balance. 

This structure of DC OPF brings an interesting consequence. For instance, switching off some lines may be beneficial in terms of the cost. This fact is counterintuitive since in regular network flow problems, deleting a line cannot improve the objective value. However, due to underlying physics, if there is a line connecting two buses, then there must be a power flow proportional to the difference of angles. If a line is congested, sometimes it is better to switch it off and \textit{redirect} flow using a different route.

Let us start with an illustrative example which demonstrates why switching is actually useful. Let us consider the instance case6ww from MATPOWER with transmission line limits are redefined as in Table \ref{New  transmission line limits for case6ww.}.
\begin{table}[h] 
\begin{center}
% Table generated by Excel2LaTeX from sheet 'Sheet1'
\begin{tabular}{|r|r|r|r|r|r|}
\hline
      $(i,j)$ &      $\bar f_{ij}$ &       $(i,j)$ &        $\bar f_{ij}$ &       $(i,j)$ &        $\bar f_{ij}$ \\
\hline
     (1,2) &        118 &      (2,4) &         94 &      (3,6) &         39 \\
\hline
     (1,4) &        111 &      (2,5) &         25 &      (4,5) &         83 \\
\hline
     (1,5) &         94 &      (2,6) &         22 &      (5,6) &        111 \\
\hline
     (2,3) &        101 &      (3,5) &         35 &            &            \\
\hline
\end{tabular} 
\caption{New  transmission line limits for case6ww.}\label{New  transmission line limits for case6ww.} 
\end{center} 
\end{table}
With this setting, DCOPF is infeasible whereas allowing switching provides a feasible solution with lines (1,2), (1,4), (2,6), (4,5), (3,6) disconnected. The optimal objective function value is \$2299.51.

If $\bar f_{ij}$ values are increased by 5 MW, then DC OPF becomes feasible with optimal cost of \$2305.90 whereas switching off lines (1,2), (1,4),  (4,5), (5,6) gives a feasible solution with \$2259.23.

If $\bar f_{ij}$ values are increased by 5 MW more, then DC OPF with and without switching (actually, all lines are active) give the same cost of \$2259.23. 
}

\subsection{Angle Formulation of DC-OTS}

The angle-based DC-OPF formulation \eqref{eq:dc-opf} can be easily
adapted to switching by introducing binary variables $x_{ij}$ that
takes the value $1$ if line $(i,j) \in \cL$ is on, and $0$ if the line is
disconnected.  A direct nonlinear formulation of DC-OTS is
\begin{subequations}\label{eq:switch-nonlinear}
\begin{align}
  \min  &\hspace{0.5em}  \sum_{i \in \mathcal{G}} c_i p_i^g \label{objS} \\
  \mathrm{s.t.}  &\hspace{0.5em} {\bred   \eqref{activeAtBus},  \eqref{powerOnArc},  \eqref{activeAtGenerator}  } \\
  & \hspace{0.5em} B_{ij}(\theta_i-\theta_j)x_{ij} = f_{ij}  &(&i,j) \in \cL \label{eq:def-fS}\\
  & \hspace{0.5em}  x_{ij} \in \{0,1\}  &(&i,j) \in \mathcal{L}. \label{binaryS}
\end{align}
\end{subequations}
The constraints \eqref{eq:def-fS} ensure both that Ohm's Law
\eqref{eq:def-f} is enforced if the line is switched on and that power flow
$f_{ij} = 0$ if the line is switched off.  However, these constraints 
\eqref{eq:def-fS} contain nonlinear, nonconvex terms of the form 
$\theta_i x_{ij}$.  The standard way to linearize the inequalities 
{\bred \eqref{eq:def-fS} }  is employed by \cite{Fisher} to produce the
following formulation:
\begin{subequations} \label{eq:Fisher-Form}
\begin{align} 
  \min  &\hspace{0.5em}  \sum_{i \in \mathcal{G}} c_ip_i^g \label{obj} \\
  \mathrm{s.t.}  &\hspace{0.5em} {\bred   \eqref{activeAtBus},    \eqref{activeAtGenerator},    \eqref{binaryS}  } \\
  & \hspace{0.5em}   B_{ij}(\theta_i-\theta_j) -M_{ij}(1-x_{ij})  \le f_{ij} \le B_{ij}(\theta_i-\theta_j) + M_{ij}(1-x_{ij}) &(&i,j) \in \mathcal{L} \label{power big M} \\
  & \hspace{0.5em}   -\bar f_{ij} x_{ij} \le f_{ij}   \le \bar f_{ij} x_{ij}  &(&i,j) \in \mathcal{L} \label{powerLog},
\end{align}
\end{subequations}
where $M_{ij}$ is chosen sufficiently large to make the inequalities \eqref{power
  big M} redundant if $x_{ij} = 0$. 

\subsection{Cycle Formulation of DC-OTS}
Inspired by the cycle formulation \eqref{eq:dc-opfkvl} for the DC-OPF, we
can formulate the DC-OTS problem without angle variables as well. The
full formulation enforces Kirchoff's Voltage Law only if all arcs in a
cycle are switched on.
\begin{subequations}	\label{Full Cycle}
\begin{align}
  \min  &\hspace{0.5em}  \sum_{i \in \mathcal{G}} c_ip_i^g  \label{objectiveCG} \\
  \mathrm{s.t.}  &\hspace{0.5em} {\bred   \eqref{activeAtBus},    \eqref{activeAtGenerator},    \eqref{binaryS}, \eqref{powerLog}  } \\
  & \hspace{0.5em} - M_C  \sum_{ (i,j) \in C}  (1- x_{ij}) \le \sum_{ (i,j) \in C} \frac{f_{ij}}{\bar B^C_{ij}}  \le  M_C  \sum_{ (i,j) \in C}  (1- x_{ij})    & C& \in \mathcal{C} \label{cycleG}.
\end{align}
\end{subequations}
The value $M_C$ must be selected so that the inequalites
\eqref{cycleG} are redundant if $\sum_{(i,j) \in C} (1-x_{ij}) \geq
1$.  In formulation \eqref{Full Cycle}, $\mathcal{C}$ is the set of
{\em all cycles} in the graph $G=(\cB,\cL)$.  The cardinality of $\cC$
is in general quite large, so we do not propose using \eqref{Full
  Cycle} directly.  Rather, we use the formulation \eqref{Full Cycle}
as the starting point for deriving strong valid inequalities in
Section~\ref{sec:valid}. Furthermore, the inequalities \eqref{cycleG} could be added as cuts within a branch-and-cut
algorithm. These inequalities are required to define the feasible region, so the branch-and-cut procedure would search for
a violated inequality from the class \eqref{cycleG} any time it identifies a
candidate solution with the $x$ components binary. (Inequalities added as cuts in this way are sometimes referred to as ``lazy cuts''.)

\section{Complexity of DC-OTS}
\label{sec:complexity}

%In this section, we discuss the complexity of DC switching problem in two simple cases, namely radial networks and series-parallel graphs. It turns out that the former is polynomially solvable while the latter is NP-complete as proven below.

In this section, we discuss the complexity of the DC optimal transmission
switching problem.  The input to the problem is a power network as
described at the beginning of Section~\ref{sec:DCOPF}.  In the {\em
  feasibility version} of DC-OTS, we ask if there {\em exists} a
subset of lines to switch off such that the DC-OPF is feasible for the
induced topology.  The feasibility version of DC-OTS with a
cardinality constraint has been proven to be NP-Complete
in~\cite{Bienstock} by reduction from the Exact 3-Cover Problem.
Recently, many complexity and approximability results on 
DC-Switching problems were given in 
\cite{lehmann.grastien.vanhentenryck:14}, including the result that
DC-OTS is NP-Hard, even if the underlying graph is a cactus.  
Our results were
established independently, and complement the results of
\cite{lehmann.grastien.vanhentenryck:14} by formally establishing that
the DC-OTS problem is easy if the graph is a tree and NP-Hard even if
there is one generation/load pair on series-parallel graphs.

\begin{prop} \label{connected}
In the DC-OTS, there exists an optimal solution in which the lines switched on form a connected network.
\end{prop}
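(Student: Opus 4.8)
The plan is to start from an arbitrary optimal solution and, whenever the switched-on lines are disconnected, to reconnect the components by switching additional lines back on without changing the generation, so that both feasibility and the objective value are preserved. Let $(p^g, f, x)$ be an optimal solution and let $H$ be the subgraph of $G=(\cB,\cL)$ induced by the arcs with $x_{ij}=1$. If $H$ is connected there is nothing to prove, so suppose $H$ has connected components $C_1,\dots,C_k$ with $k\ge 2$. The first key observation is that each component is power-balanced: summing the flow-balance constraints \eqref{activeAtBus} over the buses of a single component $C_p$, every internal flow cancels and every flow leaving $C_p$ vanishes, since arcs leaving $C_p$ are switched off and hence carry no flow by \eqref{powerLog}. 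This yields $\sum_{i\in C_p}(p_i^g-p_i^d)=0$ for every $p$.

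Next, since $G$ is connected, I would contract each component $C_p$ to a single node and take a spanning tree of the resulting (connected) contracted multigraph; this selects $k-1$ arcs of $\cL$, each joining two distinct components. I would switch these arcs on (set their $x$ to $1$) and assign them zero flow, leaving all other variables unchanged. Because these arcs form a spanning tree over the components, each of them is a bridge in the enlarged switched-on subgraph and therefore lies on no cycle. Feasibility is then checked directly: the flow-balance equations \eqref{activeAtBus} are unaffected, since each newly switched-on arc contributes a zero term exactly as it did while switched off; the bound constraints \eqref{powerLog} hold because $-\bar f_{ij}\le 0\le\bar f_{ij}$; and the generation variables, hence the objective \eqref{objS}, are untouched. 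Assigning zero flow on the bridges is moreover consistent with conservation precisely because each component has zero net injection by the observation above.

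Finally, I would argue that the remaining physical constraints are satisfied. In the cycle formulation \eqref{Full Cycle} this is immediate: the added arcs create no new cycle, so the KVL-type constraints \eqref{cycleG} are neither introduced nor altered. In the angle-based formulation \eqref{eq:Fisher-Form} one must instead exhibit voltage angles; the only requirement the bridges impose is $\theta_a=\theta_b$ at their endpoints, since their flow is $0$, and because angles within each component are determined only up to an additive constant, I would root the tree of bridges at one component and shift the angles of the remaining components outward along the tree to satisfy these equalities consistently. The main point to be careful about is exactly this last step together with the bridge property: one must verify that the reconnecting arcs genuinely form bridges, so that no new cycle, and hence no new KVL constraint, is created and so that per-component balance forces zero flow. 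The tree structure of the added arcs is what guarantees both the absence of cycles and the consistency of the angle shifts.
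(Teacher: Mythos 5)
Your proof is correct and follows essentially the same route as the paper's: starting from an optimal solution with $k$ components, you switch on $k-1$ reconnecting lines (a spanning tree over the contracted components) with zero flow, observe that no new cycles are created so the KVL constraints \eqref{cycleG} are untouched, and conclude feasibility and equal cost. The extra details you supply---the per-component power-balance observation and the explicit angle-shifting argument for the angle formulation \eqref{eq:Fisher-Form}---are sound but not needed in the paper's version, which argues directly in the cycle formulation \eqref{Full Cycle} where equivalence has already been established.
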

\begin{proof}
Consider the Cycle Formulation \eqref{Full Cycle} of the DC-OTS, and
let $\mathcal{L}'$ be the active lines in an optimal solution.  Assume
that the network corresponding to this solution has $k$ connected
components.  Since the original network $G(\cB,\cL)$ is connected, we
can find a set of transmission lines $\mathcal{L}''$ with cardinality
$k-1$ such that $G'=(\cB,\mathcal{L}' \cup \mathcal{L}'')$ is
connected.  Now, let $x_{ij}=1$ and $f_{ij}=0$ for all $(i,j) \in
\mathcal{L}''$.  By construction, no new cycles are created by
switching on lines in $\mathcal{L}''$.  Further, the balance
constraints {\bred \eqref{activeAtBus} } and bound
constraints {\bred \eqref{powerLog} } are satisfied.  Hence, we have
demonstrated a new solution with the same objective value where the
network formed by the active lines is connected.
\end{proof}

\begin{cor}
If $G=(\cB,\cL)$ is a tree, the DC-OTS problem is solvable in
polynomial time.
\end{cor}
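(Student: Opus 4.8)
The plan is to reduce the tree case of DC-OTS to a single instance of DC-OPF, which is a linear program solvable in polynomial time. The starting observation is that a tree contains no cycles, so the set $\mathcal{C}$ of all cycles is empty and the cycle constraints \eqref{cycleG} in the Cycle Formulation \eqref{Full Cycle} are vacuous. Thus the only role the binary variables $x_{ij}$ play is through the flow bounds \eqref{powerLog}, which merely force $f_{ij}=0$ on any switched-off line.

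Next I would invoke Proposition \ref{connected} to pin down the optimal topology. By that proposition there is an optimal solution whose active lines $\mathcal{L}'$ form a connected network spanning all of $\cB$. A tree on $|\cB|$ nodes has exactly $|\cB|-1$ edges, and every connected spanning subgraph must contain at least $|\cB|-1$ edges; hence $\mathcal{L}' = \cL$, that is, $x_{ij}=1$ for every $(i,j)\in\cL$. In words, on a tree it is never strictly beneficial to switch a line off, and we may fix all switches to the on position.

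With all $x_{ij}$ fixed to $1$, formulation \eqref{Full Cycle} collapses to the DC-OPF \eqref{eq:dc-opf} on the full tree, a linear program. I would additionally note that this LP is especially transparent on a tree: deleting any edge splits the tree into two components, so the balance constraints \eqref{activeAtBus} uniquely determine each line flow as the net injection on one side, and the angle variables can always be recovered from \eqref{eq:def-f} because there are no cycle constraints \eqref{eq:KVL} to violate. The only genuinely free decisions are the generation levels $p_i^g$, chosen to minimize \eqref{objective} subject to the line limits \eqref{powerOnArc} and generation bounds \eqref{activeAtGenerator}; this LP is solvable in polynomial time.

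Because the reduction is so direct, I do not anticipate a serious obstacle; the one point needing care is infeasibility. Proposition \ref{connected} presupposes that a feasible (optimal) solution exists, so one should verify the easy converse: if the full-tree DC-OPF is infeasible then DC-OTS itself is infeasible, since any feasible switching solution would, via the construction in Proposition \ref{connected} (add back the missing edges carrying zero flow, which creates no cycles in a tree), yield a feasible full-tree solution. Hence solving the single DC-OPF linear program decides feasibility and returns the optimal value, which establishes polynomial solvability.
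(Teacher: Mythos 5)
Your proposal is correct and follows essentially the same route as the paper: invoke Proposition \ref{connected}, note that removing any edge disconnects a tree so all lines are active in some optimal solution, and conclude that the problem reduces to the DC-OPF linear program. Your added care about the infeasible case (adding back zero-flow edges creates no cycles on a tree, so DC-OTS feasibility implies full-tree DC-OPF feasibility) is a sound refinement that the paper's terser proof leaves implicit.
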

\begin{proof}
Due to Proposition \ref{connected}, there exists an optimal solution
which induces a connected network. Since removing any line disconnects
the tree, there exists an optimal solution in which all lines are
active. But this is exactly the DC-OPF problem without switching, which
can be solved via linear programming, a problem known to be
polynomially solvable.
\end{proof}

Theorem~\ref{thm:np-hard} establishes that DC-OTS is NP-Complete even
if the power network is a series-parallel graph, and there is only one
demand-supply pair.
\begin{thm}
\label{thm:np-hard}
The feasibility version of DC-OTS is NP-complete even when
$G=(\cB,\cL)$ is a series-parallel graph, there is $|\cG| = 1$
generator, and one node $i \in \cB$ such that $p_i^d \neq 0$.
\end{thm}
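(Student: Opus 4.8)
The plan is to prove both membership in NP and NP-hardness. Membership is immediate: a certificate is the subset of lines switched off, and for a fixed topology the DC-OPF feasibility question is a linear program (project out $\theta$ using \eqref{eq:def-f}, or equivalently impose KCL, the flow bounds, and KVL on a cycle basis of the \emph{active} subgraph), which can be checked in polynomial time. The work is therefore in the hardness reduction, which I would carry out from SUBSET SUM: given positive integers $a_1,\dots,a_n$ and a target $T$, decide whether some $S\subseteq\{1,\dots,n\}$ satisfies $\sum_{i\in S}a_i=T$.

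First I would build the interconnection network. Introduce three buses $s,m,t$, place the single generator at $s$ and the only nonzero demand $p_t^d=D$ at $t$ (so by conservation the generator produces exactly $D$). Add a ``series'' line $s$--$m$ with susceptance $B_0$, a ``direct'' line $s$--$t$ with susceptance $B^\ast$, and, for each item $i$, a parallel line $m$--$t$ with susceptance $B_i=a_i$. Parallel-composing the item lines, series-composing the result with $s$--$m$, and then parallel-composing with the direct line $s$--$t$ shows that $G$ is two-terminal series-parallel with a single generation/demand pair, as required.

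The key observation I would exploit is that, once a switching $S$ (the set of active item lines, with $s$--$m$ and $s$--$t$ kept on) is fixed, the DC power flow is \emph{uniquely} determined, and the flows on the two reference lines depend on $S$ only through the subset sum $\sigma:=\sum_{i\in S}a_i$. Solving KCL together with the KVL/Ohm relations gives closed forms for $f_{sm}$ and $f^\ast$ that are strictly monotone in $\sigma$ ($f_{sm}$ increasing, $f^\ast=D-f_{sm}$ decreasing). Consequently the bound $f_{sm}\le\bar f_{sm}$ is equivalent to an \emph{upper} bound $\sigma\le\sigma_{\max}$, while $f^\ast\le\bar f^\ast$ is equivalent to a \emph{lower} bound $\sigma\ge\sigma_{\min}$; choosing the item-line bounds large makes them non-binding. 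I would then calibrate $B_0,B^\ast,\bar f_{sm},\bar f^\ast$ (explicit small rationals of size polynomial in the input) so that the feasible window for $\sigma$ is the open interval $(T-\tfrac12,\,T+\tfrac12)$. Since every subset sum $\sigma$ is an integer, the only attainable value inside this window is $T$, so a feasible switching exists if and only if some subset sums to exactly $T$.

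The main obstacle is precisely this step of manufacturing an \emph{exact} equality ($\sigma=T$) from \emph{inequality} flow bounds: for any fixed topology the set of feasible subset sums is an interval, never a single point, so the reduction must lean on the integrality of subset sums together with a sufficiently narrow window. A secondary subtlety I would need to dispatch is ruling out ``degenerate'' topologies that bypass the two reference constraints---switching off the direct line $s$--$t$, or the series line $s$--$m$, or all item lines. Setting both $\bar f_{sm}$ and $\bar f^\ast$ strictly below $D$ forces all of this flow to overload whichever single reference line would otherwise carry it, so every feasible switching must keep $s$--$m$, $s$--$t$, and at least one item line active; this guarantees that feasibility is governed exactly by the window on $\sigma$ and completes the equivalence.
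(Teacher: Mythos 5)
Your reduction is correct and starts from the same source problem (subset sum) as the paper, but it uses a genuinely different forcing mechanism. The paper's gadget places generation $2$ behind two unit-capacity lines $(0,n+2)$ and $(n+1,n+2)$ whose combined capacity exactly equals the demand, so both must be \emph{saturated}; this pins the voltage angles at buses $0$ and $n+1$ exactly, whence each surviving parallel pair $(0,i),(i,n+1)$ (two series lines of susceptance $2a_i$, i.e., effective susceptance $a_i$) carries exactly $a_i/b$, and KCL at bus $n+1$ forces $\sum_{i\in I}a_i=b$ on the nose. You instead keep the binding capacities slack at the target and extract exactness from \emph{integrality}: the effective susceptance of the active bundle, and hence the flow split between the series path and the direct line, is strictly monotone in $\sigma=\sum_{i\in S}a_i$, so the two reference-line bounds carve out the window $[T-\tfrac12,\,T+\tfrac12]$, which contains the unique integer $T$. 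Both work (your calibration does produce polynomial-size rationals; e.g., $B_0=B^\ast=D=1$ gives $f_{sm}=\sigma/(2\sigma+1)$, so $\bar f_{sm}=\tfrac{2T+1}{4T+4}$ and $\bar f^\ast=\tfrac{2T+1}{4T}$, both $<D$ for $T\ge 1$). The paper's tight-cut argument yields a shorter verification with no monotonicity computation, while your window argument makes the ``only if'' direction transparent and handles the degenerate topologies more cleanly (your condition $\bar f_{sm},\bar f^\ast<D$ versus the paper's implicit treatment of half-switched pairs, which create leaf buses carrying zero flow). One formal wrinkle: your network is a multigraph, since the $n$ item lines are parallel edges between $m$ and $t$, whereas the paper's notation ($\cL$ a set of ordered pairs, $\delta^+(i)=\{j:(i,j)\in\cL\}$) presumes a simple graph. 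This is easily repaired by subdividing each item line into two series lines of susceptance $2a_i$ through a new bus $i$ --- exactly the paper's device, which preserves series-parallelness and the effective susceptances --- at the cost of then also dispatching the half-active pairs as above. Finally, you supply the NP-membership argument (certificate plus LP feasibility check on the induced topology) that the paper asserts but leaves implicit.
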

\begin{proof}
We prove this result by a reduction from the subset sum problem, which is known to be NP-Complete (\cite{Garey}). Consider an instance of a subset problem as: Given $a_i \in \mathbb{Z}_{++}$ for $i \in \{1, \dots, n\}$ and $b \in \mathbb{Z}_{++}$, does there exist a subset $I \subseteq \{1, \dots, n\}$ such that $\sum_{i \in I} a_i   = b$?  We construct an instance of switching problem as follows:
\begin{enumerate}
\item There are $n + 3$ buses $\{0, 1, \dots, n, n +1, n +2\}$.
\item Following are the lines: $(0, i)$  for all $i\in \{1, \dots, n\}$; $(i, n+1)$ for all $i\in \{1, \dots, n\}$; $(n +1, n+2)$; $(0, n +2)$.
\item The capacities of the lines are: $\frac{a_i}{b}$ for the line $(0,i)$ and $(i, n + 1)$ for all $i\in \{1, \dots, n\}$; $1$ for $(n +1, n+2)$ and $(0, n +2)$.
\item The susceptances of the lines are: ${2a_i}$ for the line $(0,i)$ and $(i, n + 1)$ for all $i\in \{1, \dots, n\}$; $1$ for $(n +1, n +2)$; $ \frac{b}{b+1}$ for $(0, n+2)$.
\item There is a generation of $2$ at bus $0$ and load of $2$ at bus $n +2$.
\end{enumerate}
Clearly, the size of the instance of the switching problem is polynomial in the size of the given instance of the subset sum problem. Also note that the graph is a series parallel graph and there is only one  demand supply pair.

We now verify that the subset sum problem is feasible if and
only if the switching problem is feasible.

$( \Rightarrow  )$: Since the subset sum problem is feasible, let
$\sum_{i \in I}a_i = b$ where $I \subseteq \{1, \dots, n\}$. Then
construct a solution to the switching problem as follows: Switch off
the lines $(0,i), (i, n +1)$ for $i \in \{1, \dots, n\}\setminus I$.
It is straightforward to establish that a feasible solution to the
DC-OTS exists.  (In the solution, the angle at bus $0$ is
$1+\frac{1}{b}$, the angle at bus $i$ is $1+\frac{1}{2b}$ for all $i
\in I$, the angle at bus $n+1$ is $1$, and the angle at bus $n+2$ is 0).

$( \Leftarrow  )$: The subset sum problem is infeasible and assume by
contradiction that the switching problem is feasible. Then note that
the flow in arcs $(0, n+2)$ and $(n +1, n +2)$ are $1$ each (and these
lines are not switched off). WLOG, let the angle at bus $n +2 $ be $0$. This implies that the angle at bus $0$ is
$1+\frac{1}{b}$ and at bus $n +1$ is $1$. Then note that if a pair of lines $(0, i), (i , n +1)$ is not switched off,
this implies that the angle at bus $i$ is
$1+\frac{1}{2b}$ and the resulting flow is $\frac{a_i}{b}$ along the path $(0, i), (i , n +1)$. Therefore, as a switching solution exists, we have that there exists some $I \subseteq \{1, \dots, n\}$ such that the paths $(0, i), (i , n +1)$ for $i \in I$ are switched on (and others are switched off). Then $\sum_{i \in I} \frac{a_i}{b} = 1$ (by flow conservation at bus $n +1$), the required contradiction.
\end{proof}

%However, the resulting network is not parallel-series. Therefore, our result shows that switching problem is NP-Hard, even in the case of arguably the simplest electrical networks.

%
\section{Valid Inequalities} \label{sec:valid}

In this section, we give two (symmetric) classes of inequalities for
DC-OTS that are derived by considering a relaxation of the cycle
formulation \eqref{Full Cycle}.  The inequalities are derived by
projecting an extended formulation of our chosen relaxation.  We 
additionally show that the inequalities define the convex hull of the
relaxation and that each of the inequalities defines a facet of the
relaxation.  The separation problem for the new class of inequalities
is a knapsack problem, but we show in Section~\ref{sec:separation} how
to exploit the special structure of the knapsack to give a closed-form
solution.

{\bred 
We remind the reader that the objective function is assumed to be linear, as opposed to convex quadratic. %so that finding the convex hull of a substructure may strengthen the formulation.
In general, finding the convex hull of the feasible region may not be as useful algorithmically when the objective
function is nonlinear convex as the optimal solution may lie in the interior of the convex hull. However, the generation
cost functions are usually convex increasing functions of $p_i^g$ over the interval $[p_i^{\text{min}},
p_i^{\text{max}}]$ and, therefore, the optimal solutions of any convex relaxation will lie on the boundary of the relaxation. 
%In this case the optimal solution of a relaxation will then lie on the boundary of the relaxation. 
Thus, finding the convex hull of the feasible solutions may still be useful in improving bounds when we are working with a convex quadratic increasing objective function instead of a linear objective function.}
Nevertheless, we also point out that when the optimal solution is not an extreme point, the bound obtained by this approach might be weak. As a future work,  numerical experiments should be carried out to analyze this case empirically.

\subsection{Derivation}

Consider the constraints (\ref{cycleG}), {\bred (\ref{powerLog})
and (\ref{binaryS}) } in the cycle-based formulation for DC-OTS for one
specific cycle $C \in \cC$ and define the following relaxation of the
feasible region of \eqref{Full Cycle}. 
\begin{equation}
\begin{split}
\mathcal{S}_C  &=\{(f, x) :     - M_C   \sum_{ (i,j) \in C}  (1- x_{ij}) \le \sum_{ (i,j) \in C} \frac{f_{ij}}{\bar B_{ij}^C}  \le  M_C   \sum_{ (i,j) \in C}  (1- x_{ij}) ,  \\
    &-\bar f_{ij}x_{ij}  \le f_{ij}   \le \bar f_{ij}x_{ij}   \quad
(i,j) \in  {C}, \ x_{ij} \in \{0,1\}  \quad (i,j) \in  {C} \}.
\end{split}
\end{equation}
In the remainder of this section, we assume that $C$ is a directed cycle and hence, $\bar B_{ij}^C = B_{ij}$.
Our main result in this section concerns the inequalities
\begin{equation}
\begin{split}
-\Delta(S) (|C|-1)+\sum_{ (i,j) \in S}  [\Delta(S) - w_{ij}]x_{ij} +  \Delta(S) \sum_{ (i,j) \in C \setminus S} x_{ij} \\ 
\le \sum_{ (i,j) \in S}  \frac{f_{ij}}{ B_{ij}} \le \quad \quad\quad \quad\quad \quad\quad \quad  \quad \\  
\Delta(S) (|C|-1) -\sum_{ (i,j) \in S}  [\Delta(S) - w_{ij}]x_{ij} -  \Delta(S) \sum_{ (i,j) \in C \setminus S} x_{ij}  \\
  \quad S \subseteq {C}  \ \  \text{s.t.} \ \ \Delta (S) > 0 \label{project without y},
\end {split}
\end{equation}
where $w_{ij} := \frac{\bar f_{ij}}{ B_{ij}}$, and
\begin{alignat*}{2}
w(S) &:= \sum_{(i,j) \in S} w_{ij} \ &&\mbox{ for } S \subseteq C\\
\Delta(S) &:= w(S) - w(C \setminus S) = 2 w(S) - w(C) \ && \mbox{ for } S
\subseteq C .
\end{alignat*}
We show that the inequalities \eqref{project without y} are the
only non-trivial inequalities defining $\text{conv} (\mathcal{S}_C).$
\begin{thm} \label{convex hull}
\begin{equation}
\begin{split}
\text{conv} (\mathcal{S}_C)  = \{(f, x):  (\ref{project without y}),  
    -\bar f_{ij}x_{ij}  \le f_{ij}   \le \bar f_{ij}x_{ij},  \   x_{ij} \le 1  \quad (i,j) \in  {C} \}
\end{split}
\end{equation}
\end{thm}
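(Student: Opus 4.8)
The plan is to prove the two inclusions separately, working throughout in the rescaled flow variables $g_{ij} := f_{ij}/B_{ij}$, so that the bounds read $|g_{ij}| \le w_{ij}x_{ij}$ and the cycle constraint \eqref{cycleG} collapses (since $C$ is directed) to the single statement that $\sum_{(i,j)\in C} g_{ij}=0$ whenever $x=\mathbf 1$ and is vacuous otherwise. I would first dispatch the easy inclusion $\mathrm{conv}(\mathcal S_C)\subseteq Q$, where $Q$ denotes the polyhedron on the right-hand side of the theorem, by checking that each inequality in \eqref{project without y} is valid for every point of $\mathcal S_C$. Because negating $f$ maps $\mathcal S_C$ to itself and exchanges the two sides of \eqref{project without y}, it suffices to verify the upper bound. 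I would split on the value of $x$: when $x=\mathbf 1$ the right-hand side simplifies to $w(C\setminus S)$, and the cycle equation together with $g_{ij}\ge -w_{ij}$ gives $\sum_{(i,j)\in S} g_{ij}=-\sum_{(i,j)\in C\setminus S} g_{ij}\le w(C\setminus S)$; when $x\ne\mathbf 1$ the cycle equation is inactive, so $\sum_{(i,j)\in S} g_{ij}\le \sum_{(i,j)\in S} w_{ij}x_{ij}$, and the gap between this quantity and the right-hand side of \eqref{project without y} works out to $\Delta(S)\bigl(|C|-1-\sum_{(i,j)\in C}x_{ij}\bigr)$, which is nonnegative precisely because $x\ne\mathbf 1$ forces at least one $x_{ij}=0$. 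The box inequalities and $x_{ij}\le 1$ are immediate.

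For the reverse inclusion I would follow the route indicated in the text and build a disjunctive extended formulation for $\mathrm{conv}(\mathcal S_C)$. The natural two-term disjunction separates the ``all-on'' polytope $\{x=\mathbf 1,\ \sum_{(i,j)\in C}g_{ij}=0,\ |g_{ij}|\le w_{ij}\}$ from the convex hull of the ``some-line-off'' points. The second piece is a box-type polytope whose coordinates decouple, but it is genuinely smaller than the full box $\{|g_{ij}|\le w_{ij}x_{ij},\,0\le x\le 1\}$: determining its facet description (the box inequalities together with the constraints that exclude the corner $x=\mathbf 1$) is the first real task. Writing the Balas representation of the convex hull of this union introduces a multiplier $\lambda$ and one disaggregated copy $y$ of the flow vector, and the claim is precisely that projecting $(f,x,y,\lambda)$ onto $(f,x)$ returns $Q$. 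I would carry out this projection by Fourier--Motzkin elimination of $y$ and $\lambda$, track which combinations of the lifted bound constraints survive, and match the survivors to the family \eqref{project without y} indexed by subsets $S$, showing that a subset contributes a non-redundant facet exactly when $\Delta(S)>0$, while subsets with $\Delta(S)\le 0$ produce inequalities dominated by the box constraints.

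The main obstacle is this projection step: one must show not only that every inequality \eqref{project without y} is produced but also that no further inequalities are needed. As a hedge I would prepare the alternative finish of proving directly that every extreme point of the bounded polyhedron $Q$ has $x\in\{0,1\}^{|C|}$; any such vertex then lies in $\mathcal S_C$, since the box inequalities force $|f_{ij}|\le\bar f_{ij}x_{ij}$ with $x_{ij}$ integral, and at $x=\mathbf 1$ the two inequalities \eqref{project without y} for the choice $S=C$ (for which $C\setminus S=\emptyset$ and $\Delta(C)=w(C)>0$) read $0\le\sum_{(i,j)\in C}g_{ij}\le 0$, reinstating the cycle equation, whereas at $x\ne\mathbf 1$ the cycle constraint is slack. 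This integrality-of-vertices argument reduces to showing that a fractional coordinate $x^*_{ij}$ at a vertex can always be perturbed within $Q$, and it is there, as in the projection route, that I expect the real work to lie.
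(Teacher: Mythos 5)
Your plan follows essentially the same route as the paper's proof: the paper uses the identical two-term disjunction (all lines on versus at least one line off, with the second piece described by the box constraints plus $\sum_{(i,j)\in C}x_{ij}\le |C|-1$, exactly as you predict), forms the Balas-style extended formulation \eqref{disjunctive}, and then projects --- eliminating the disaggregated flows $f^1,f^0$ in Proposition \ref{prop:pc} via a feasibility argument (nonemptiness of the intersection of a hypercube with the hyperplane $\sum_{(i,j)\in C}f^1_{ij}=0$, checked through the min/max conditions \eqref{eq:minmaxf1}) rather than raw Fourier--Motzkin, and then eliminating the single scalar $y_C$ by explicit Fourier--Motzkin in the proof of Theorem \ref{convex hull}, where subsets with $\Delta(S)\le 0$ are shown to be dominated just as in your validity check. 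The ``main obstacle'' you flag --- carrying out the projection and certifying that no inequalities beyond \eqref{project without y} survive --- is precisely what those two steps accomplish, so your proposal is sound and matches the paper's argument, with your direct verification of the easy inclusion being a harmless addition that the paper's projection subsumes.
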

In proving this result, for ease of presentation, we assume without loss of generality that $B_{ij}=1$ for all $(i,j)\in\mathcal{L}$ by appropriately scaling $\bar f_{ij}$. 
%We have two main results listed in the following theorems:
%\begin{thm} \label{polynomial}
%It is polynomial time to optimize over the set $\mathcal{S}_C $.
%\end{thm}
The result is proven through a series of propositions using
disjunctive arguments. Let us start with the following linear system
%for a particular directed cycle $C$:  %% Jim comment: cycle C is fixed throughout section
\begin{subequations} \label{disjunctive}
\begin{align} 
  & \hspace{0.5em}   -\bar f_{ij} x_{ij}^1 \le f_{ij}^1  \le \bar f_{ij} x_{ij}^1  &(&i,j) \in {C}  \\
  & \hspace{0.5em} \sum_{ (i,j) \in C}   x_{ij}^1  = |C| y_C   \\
  & \hspace{0.5em} \sum_{ (i,j) \in C} f_{ij}^1  = 0  \\
  & \hspace{0.5em} 0 \le x_{ij}^1 \le y_C  &(&i,j) \in {C} \\
  & \hspace{0.5em}   -\bar f_{ij} x_{ij}^0 \le f_{ij}^0  \le \bar f_{ij} x_{ij}^0  &(&i,j) \in {C}  \\
  & \hspace{0.5em} \sum_{ (i,j) \in C}   x_{ij}^0  \le (|C| - 1)(1-y_C) \\
  & \hspace{0.5em} 0 \le x_{ij}^0 \le 1-y_C   &(&i,j) \in {C}  \\
  & \hspace{0.5em} x_{ij} = x_{ij}^1  + x_{ij}^0 &(&i,j) \in {C}  \\
  & \hspace{0.5em}  f_{ij} =  f_{ij}^1 +  f_{ij}^0    &(&i,j) \in {C} \\
  & \hspace{0.5em} 0 \le y_C \le 1,
\end{align}
\end{subequations}
and define the polytope 
\[ \mathcal{E}_C = \{(f,  f^1, f^0, x,x^1, x^0, y) :
(\ref{disjunctive})\}. \]

\begin{prop} \label{ cycle prop}
System (\ref{disjunctive}) is an extended
formulation for $\text{conv}(\mathcal{S}_C)$.
Furthermore, polytope $\mathcal{E}_C$ is integral 
so that we have  conv$(\mathcal{S}_C)$ = $\text{proj}_{f, x} \mathcal{E}_C$.
\end{prop}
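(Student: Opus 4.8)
The plan is to read system (\ref{disjunctive}) as the Balas extended formulation for the convex hull of a union of two polytopes, one modeling the case in which every line of $C$ is switched on and the other the case in which at least one line is off, and then to verify that each of these two polytopes is integral in the $x$-variables.

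First I would introduce
\[
P^1 = \Bigl\{(f,x) : x_{ij}=1,\ -\bar f_{ij}\le f_{ij}\le \bar f_{ij}\ \ (i,j)\in C,\ \sum_{(i,j)\in C} f_{ij}=0\Bigr\}
\]
and
\[
P^0 = \Bigl\{(f,x) : 0\le x_{ij}\le 1,\ -\bar f_{ij}x_{ij}\le f_{ij}\le \bar f_{ij}x_{ij}\ \ (i,j)\in C,\ \sum_{(i,j)\in C} x_{ij}\le |C|-1\Bigr\}.
\]
The first step is to check that $\mathcal{S}_C$ is exactly the set of points with binary $x$ lying in $P^1\cup P^0$: when all $x_{ij}=1$ the cycle inequality forces $\sum_{(i,j)\in C} f_{ij}=0$ and we land in $P^1$, while when some $x_{ij}=0$ the choice of $M_C$ (say $M_C\ge\sum_{(i,j)\in C}\bar f_{ij}$) makes the cycle inequality redundant, leaving only the box bounds and the cardinality restriction of $P^0$. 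Consequently $\mathrm{conv}(\mathcal{S}_C)=\mathrm{conv}(P^1\cup P^0)$, provided each $P^i$ equals the convex hull of its own integer points.

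Next I would observe that (\ref{disjunctive}) is precisely the Balas lifting of the disjunction $P^1\vee P^0$: the equation $\sum_{(i,j)\in C}x^1_{ij}=|C|y_C$ together with $x^1_{ij}\le y_C$ forces $x^1_{ij}=y_C$, so the first block is $y_C\,P^1$, the second block is $(1-y_C)\,P^0$, and the linking constraints $x=x^1+x^0$, $f=f^1+f^0$, $0\le y_C\le 1$ complete the standard construction. Since $P^1$ and $P^0$ are nonempty bounded polyhedra, Balas' theorem on the union of polytopes gives $\mathrm{proj}_{f,x}\mathcal{E}_C=\mathrm{conv}(P^1\cup P^0)$, and it further asserts that every vertex of $\mathcal{E}_C$ has $y_C\in\{0,1\}$ and arises either from a vertex of $P^1$ (with the $0$-block vanishing) or from a vertex of $P^0$ (with the $1$-block vanishing).

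The crux is therefore the integrality of $P^0$, the polytope $P^1$ being trivially integral since its $x$ is fixed at $\mathbf 1$. I would argue this by a direct extreme-point analysis: at any vertex of $P^0$ each $f_{ij}$ must sit at one of its $x$-dependent bounds, i.e.\ $f_{ij}=\pm\bar f_{ij}x_{ij}$ (otherwise $f_{ij}$ could be perturbed in both directions, contradicting extremality), with $f_{ij}=0$ whenever $x_{ij}=0$. Fixing these signs and substituting reduces the question to whether the $x$-component is a vertex of the cardinality polytope $Q_x=\{x:0\le x_{ij}\le 1,\ \sum_{(i,j)\in C}x_{ij}\le |C|-1\}$; if $x$ were a nontrivial convex combination in $Q_x$, copying the same signs would exhibit the vertex of $P^0$ as a nontrivial convex combination as well. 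Since a vertex of $Q_x$ makes $|C|$ independent constraints tight and only the single cardinality inequality is non-box, at least $|C|-1$ coordinates are pinned to $0$ or $1$ and the remaining one is then forced to an integer, so $x$ is binary and hence so is every vertex of $\mathcal{E}_C$. Combining these facts, each $P^i$ is the convex hull of its integer points, $\mathrm{conv}(P^1\cup P^0)=\mathrm{conv}(\mathcal{S}_C)$, and $\mathcal{E}_C$ is integral, which is exactly the claim. I expect the extreme-point/integrality argument for $P^0$ to be the only genuinely non-routine part; identifying (\ref{disjunctive}) with the Balas lifting and the redundancy bookkeeping for $M_C$ are mechanical.
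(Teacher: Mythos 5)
Your proposal is correct and follows essentially the same route as the paper: the same disjunction (all lines of $C$ active versus at least one inactive) followed by the standard Balas union-of-polytopes lifting, which is exactly how system (\ref{disjunctive}) is constructed there, with integrality of $\mathcal{E}_C$ inherited from the integrality in $x$ of the two pieces. The only difference is that you supply a full extreme-point argument for the integrality of the ``off'' polytope $P^0$ (the paper's $\mathcal{S}_C^0$), a step the paper asserts without proof, so your write-up is, if anything, slightly more complete.
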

\begin{proof}
 Let us first consider the following disjunction for  cycle $C$: Either every line is active or at least one line is disconnected. 
If all the lines are active, then we have
\begin{subequations} \label{cycle}
\begin{align} 
  & \hspace{0.5em}   -\bar f_{ij} x_{ij} \le f_{ij}  \le \bar f_{ij} x_{ij}  &(&i,j) \in {C}  \\
  & \hspace{0.5em} \sum_{ (i,j) \in C}   x_{ij}  = |C|  \label{cycle x = 1} \\
  & \hspace{0.5em} \sum_{ (i,j) \in C}  {f_{ij}}   = 0  \\
  & \hspace{0.5em} 0 \le x_{ij} \le 1  &(&i,j) \in {C}
\end{align}
\end{subequations}
Define polytope $\mathcal{S}_C^1 = \{(f, x): (\ref{cycle}) \}$. Note that $\mathcal{S}_C^1$ is integral in $x$ since  constraint (\ref{cycle x = 1}) forces 
$x_{ij}=1$ for all $(i,j) \in C$ in a feasible solution.

Otherwise, at least one of the lines is inactive and we have
\begin{subequations} \label{no cycle}
\begin{align} 
  & \hspace{0.5em}   -\bar f_{ij} x_{ij} \le f_{ij}  \le \bar f_{ij} x_{ij}  &(&i,j) \in {C}  \label{no cycle 1} \\
  & \hspace{0.5em} \sum_{ (i,j) \in C}   x_{ij}  \le |C| - 1 \\
  & \hspace{0.5em} 0 \le x_{ij} \le 1   &(&i,j) \in {C} \label{no cycle 2}
\end{align}
\end{subequations}
Define polytope $\mathcal{S}_C^0 = \{(f, x): (\ref{no cycle}) \}$
{\bred, which is again integral in $x$}. 
%[TO BE REMOVED Note that $\mathcal{S}_C^0$ is again integral in $x$ since the constraint matrix is totally unimodular when written equivalently as 
%\begin{subequations} \label{no cycle eq}
%\begin{align} 
%  & \hspace{0.5em} \sum_{ (i,j) \in C}   x_{ij}  \le |C| - 1 \\
%  & \hspace{0.5em}   - x_{ij}  + \frac{1}{\bar f_{ij}}f_{ij} \le   0 &(&i,j) \in {C}  \\
%  & \hspace{0.5em}    - x_{ij} -\frac{1}{\bar f_{ij}}f_{ij}  \le  0 &(&i,j) \in {C}  \\
%  & \hspace{0.5em} 0 \le x_{ij} \le 1   &(&i,j) \in {C} 
%\end{align}
%\end{subequations}
%TO BE REMOVED]
%}

By construction, we have $\text{conv}(\mathcal{S}_C) = \text{conv}(\mathcal{S}_C^1 \cup \mathcal{S}_C^0)$. Let us duplicate variables $(f, x)$ as $(f^1, x^1)$ and $(f^0, x^0)$ in the descriptions of $\mathcal{S}_C^1$ and $\mathcal{S}_C^0$, respectively. Then, by assigning a binary variable $y_C$ to $\mathcal{S}_C^1$ and $1-y_C$ to $\mathcal{S}_C^0$, we get system $(\ref{disjunctive})$. 
So, it is an extended formulation for $\text{conv}(\mathcal{S}_C)$.

Further, observe that $\mathcal{E}_C$ is the union of two polyhedra
that are integral in $x$:  $\mathcal{E}_C = \mathcal{S}_C^1 \cup
\mathcal{S}_C^0$.  Therefore, $\mathcal{E}_C$ must be integral in $x$
as well.  
%Finally, by construction, conv$(\mathcal{S}_C)$ = $\text{proj}_{f, x} \mathcal{E}_C$.
\end{proof}

%\jeffcomment{What does ``by construction'' mean in the last sentence up there.  It is
%  really just ``q.e.d,'' correct?  If you have an integral extended
%  formulation, then its projection is the convex hull in the original space.}

By noticing that $x_{ij}^1= y_C$ and $x_{ij}^0= x_{ij} - y_C$, we can
simplify the notation by immediately projecting out these variables.
Specifically, if we define the linear system
\begin{subequations} \label{Extended Weak}
\begin{align} 
  & \hspace{0.5em}   -\bar f_{ij} y_C \le f_{ij}^1 \le \bar f_{ij} y_C  &(&i,j) \in {C} \label{powerEW1} \\
  & \hspace{0.5em} \sum_{ (i,j) \in C} f_{ij}^1  = 0  \label{cycleEW} \\ 
  & \hspace{0.5em}   -\bar f_{ij} (x_{ij}-y_C) \le f_{ij}^0  \le \bar f_{ij}(x_{ij}- y_C)  &(&i,j) \in {C} \label{powerEW0} \\
  & \hspace{0.5em} \sum_{ (i,j) \in C}  x_{ij} -y_C \le |C| - 1 \label{cycleEWLogical} \\
  & \hspace{0.5em}   y_C  \le x_{ij} \le1   &(&i,j) \in {C} \label{extended logical} \\
  & \hspace{0.5em}  f_{ij} =  f_{ij}^1 +  f_{ij}^0    &(&i,j) \in {C}  \label{powerArcEW} \\
  & \hspace{0.5em}   0 \le y_C \le 1,   \label{yBound}
\end{align}
\end{subequations}
we have that $\mathcal{P}_C : =
\{(f,f^1,f^0,x,y) : (\ref{Extended Weak})\}= \text{proj}_{f,f^1,f^0,x,y} \mathcal{E}_C$.  In 
Proposition~\ref{prop:pc} we can further project out the $f^1$
and $f^0$ variables by defining
\begin{equation}
-\sum_{ (i,j) \in S}  w_{ij} x_{ij} +\Delta (S) y_C  \le \sum_{ (i,j) \in S} f_{ij} \le \sum_{ (i,j) \in S}  w_{ij} x_{ij} - \Delta (S) y_C  \quad S \subseteq {C}  \ \  \text{s.t.} \ \ \Delta (S) > 0. \label{project with y}
\end{equation}

\begin{prop} 
\label{prop:pc}
$\text{proj}_{f, x, y} \mathcal{P}_{C} =   \{ (f{}, x{}, y) : {\bred (\ref{powerLog}) }, (\ref{project with y}), (\ref{cycleEWLogical}),   (\ref{extended logical}), y_C \ge 0 \} $.
\end{prop}
\begin{proof}  
We begin by defining $\mathcal{Q} :=  \{ (f, x, y) :
{\bred (\ref{powerLog}) } , (\ref{project with y}), (\ref{cycleEWLogical}),
(\ref{extended logical}), y_C \ge 0\} $, and let $( f, f^1, f^0,x,y) \in  \mathcal{P}_{C}$. We claim that $(f, x, y) \in
\mathcal{Q}$. For each line $(i,j) \in {\bred C}$, summing (\ref{powerEW1}) and (\ref{powerEW0}) and using
\eqref{powerArcEW} yields {\bred(\ref{powerLog})}. So, it suffices to check constraint (\ref{project with y}). We have, for each $S \subseteq C$,
$$
0 = \sum_{ (i,j) \in C} f_{ij}^1 = \sum_{ (i,j) \in S} f_{ij}^1 + \sum_{ (i,j) \in C \setminus S} f_{ij}^1
$$
due to (\ref{cycleEW}). Recall that by scaling $\bar{f}_{ij}$, we have assumed $B_{ij}=1$, and thus $w_{ij} =
\bar{f}_{ij}$ for all $(i,j) \in \mathcal{L}$. Combined with (\ref{powerArcEW}), we have 
\begin{align}\begin{split}
\sum_{ (i,j) \in S} f_{ij} &= \sum_{ (i,j) \in S}  {f_{ij}^0}  - \sum_{ (i,j) \in C \setminus S} f_{ij}^1 \\ 
& \le \sum_{ (i,j) \in S} \bar f_{ij} (x_{ij} - y_C) + \sum_{ (i,j) \in C \setminus S} \bar f_{ij} y_C \quad\quad\quad \text{due to (\ref{powerEW0}) and (\ref{powerEW1})} \\
& = \sum_{ (i,j) \in S} w_{ij} x_{ij} - \left( \sum_{ (i,j) \in S}w_{ij} - \sum_{ (i,j) \in C \setminus S}w_{ij} \right ) y_C \\
& = \sum_{ (i,j) \in S} w_{ij} x_{ij} - \Delta(S) y_C.
\end{split}\end{align}
This is exactly the right inequality of (\ref{project with y}).
Note that although this inequality is valid for all $S \subseteq C$, the ones with $\Delta(S) \le 0$ are dominated. In fact, due to  {\bred (\ref{powerLog})} for a subset $\bar S$ with $\Delta(\bar S) \le 0$,  we have
\begin{equation}
\sum_{ (i,j) \in  \bar S}  f_{ij} \le \sum_{ (i,j) \in  \bar S} w_{ij} x_{ij} \le \sum_{ (i,j) \in \bar S} w_{ij} x_{ij}  - \Delta(\bar S) y_C.
\end{equation}
Using a symmetric argument, we can show the validity of the left inequality similarly. 
Hence, $\text{proj}_{f, x, y} \mathcal{P}_{C} \subseteq \mathcal{Q}$.

{\bred Next, we prove that any solution $(f, x, y) \in \mathcal{Q}$ can be extended by some $(f^1, f^0)$ such that it satisfies (\ref{Extended Weak}). First, we can eliminate $f^0$ variables by setting  $f_{ij}^0=f_{ij}-f_{ij}^1$. 
Therefore to show that $\text{proj}_{f,x,y} \mathcal{P}_C\supseteq\mathcal{Q}$, it suffices to show that, for any $(f,x,y)\in\mathcal{Q}$, the following system in $f_{ij}^1$ is always feasible,
\begin{subequations} \label{eq:f1}
	\begin{align}
	& \hspace{0.5em}   -w_{ij} y_C \le f_{ij}^1 \le w_{ij} y_C  &(&i,j) \in {C}  \label{eq:f1a}\\
	& \hspace{0.5em}   f_{ij}-w_{ij} (x_{ij}-y_C) \le  f_{ij}^1  \le f_{ij}+w_{ij}(x_{ij}- y_C)  &(&i,j) \in {C} \label{eq:f1b}\\
	& \hspace{0.5em} \sum_{ (i,j) \in C} f_{ij}^1  = 0, \label{eq:f1c}
	\end{align}
\end{subequations} 
which is the intersection of a hypercube defined by \eqref{eq:f1a}-\eqref{eq:f1b} and a hyperplane \eqref{eq:f1c}. First note that the hypercube is always nonempty due to \eqref{powerLog}. Then, due to the continuity of the function $\sum_{(i,j)\in C}{f_{ij}^1}$, it suffices to show the following inequalities:
\begin{align} \label{eq:minmaxf1}
\min\biggl\{\sum_{(i,j)\in C} f_{ij}^1 : \eqref{eq:f1a}, \eqref{eq:f1b}\biggr\}\leq 0 \leq \max\biggl\{\sum_{(i,j)\in C} f_{ij}^1 : \eqref{eq:f1a}, \eqref{eq:f1b}\biggr\}.
\end{align}
We show that \eqref{eq:minmaxf1} follows from \eqref{project with y}. To see this, let us look at the right inequality in \eqref{eq:minmaxf1}. The left one can be obtained symmetrically. Since \eqref{eq:f1a}-\eqref{eq:f1b} define a hypercube, the maximum in \eqref{eq:minmaxf1} is obtained by maximizing each $f_{ij}^1$ over its own interval, i.e. we have
\begin{align}
\max\biggl\{\sum_{(i,j)\in C} f_{ij}^1 : \eqref{eq:f1a}, \eqref{eq:f1b}\biggr\} &= \sum_{(i,j)\in C}\min\biggl\{w_{ij}y_C, f_{ij}+w_{ij}(x_{ij}-y_C)\biggr\}\notag \\
&= \sum_{(i,j)\in S} f_{ij}+w_{ij}(x_{ij}-y_C) + \sum_{(i,j)\in C\setminus S} w_{ij}y_C \notag \\
&=\sum_{(i,j)\in S} f_{ij} + w_{ij}x_{ij} - \Delta(S)y_C \geq 0, \label{eq:fS}
\end{align}
where $S=\{(i,j)\in C : f_{ij}+w(x_{ij}-y_C)\le w_{ij}y_C\}$ and \eqref{eq:fS} is exactly the left-hand side inequality in \eqref{project with y}.}
\end{proof}

\exclude{
\begin{proof}[Proof of Theorem \ref{polynomial}] We note that the claim of the theorem directly follows from the result in Proposition \ref{ cycle prop}, where it is shown that \eqref{disjunctive} is the extended formulation for $\mathcal{S}_C$ and its size is polynomial in the size of the cycle $C$. 
}

We complete the proof of Theorem~\ref{convex hull} by projecting out
the $y_C$ variable as well.

\begin{proof} [Proof of Theorem \ref{convex hull}]
By construction, we have that $\text{conv} (\mathcal{S}_C)  =  \text{proj}_{f, x} \mathcal{P}_{C}$.
It suffices to show that  $\text{conv}(\mathcal{S}_C) = \text{proj}_{f, x} \mathcal{P}_{C} =  \mathcal{R}$ where $ \mathcal{R}= \{ (f, x) : {\bred (\ref{powerLog})}, (\ref{project without y}),  x_{ij}  \le 1  \ \ (i,j) \in C \} $.

%First of all, consider a point $(f, x, y) \in \text{proj}_{f, x, y} \mathcal{P}_{C}$. For each $S \subseteq {C}$ with $\Delta (S) > 0 $, we have (\ref{project with y}) which gives
%\begin{align}\begin{split}
%\sum_{ (i,j) \in S} f_{ij} &\le \sum_{ (i,j) \in S}  w_{ij} x_{ij} - \Delta (S) y_C  \\
% & \le  \sum_{ (i,j) \in S}  w_{ij} x_{ij} +  \Delta (S)  ( -\sum_{ (i,j) \in C}  x_{ij}+  |C| - 1 ) \quad \text{due to (\ref{cycleEWLogical}) and  $\Delta(S) > 0$ } \\ 
% & =  \sum_{ (i,j) \in S}  w_{ij} x_{ij}   -\Delta (S)\sum_{ (i,j) \in S}  x_{ij} -\Delta (S)\sum_{ (i,j) \in S^c}  x_{ij} +  \Delta (S)  ( |C| - 1 )  \\ 
% & =  \sum_{ (i,j) \in S} [ w_{ij} -\Delta (S) ]x_{ij}  -\Delta (S)\sum_{ (i,j) \in S^c}  x_{ij} +  \Delta (S)  ( |C| - 1 ) 
%\end{split}\end{align}
%But, this is exactly right inequality in (\ref{project without y}).
%Using a symmetric argument, we can show the left inequality similarly. 
%Hence, $\text{proj}_{f, x} \mathcal{P}_{C} \subseteq \mathcal{R}$.

%Now, let us prove sufficiency of this description. 

First, let us rewrite inequalities involving $y_C$ in $ \text{proj}_{f, x, y}  \mathcal{P}_{C}$:
\begin{subequations}
\begin{align}
0 \le &y_C  \label{eq2_0} \\
\sum_{(i,j) \in C} x_{ij} - (|C|-1) \le &y_C \label{eq2_1} \\
&y_C \le x_{ij} & (&i,j) \in C  \label{eq2_2} \\
&y_C \le  \frac{1}{\Delta (S) } \sum_{ (i,j) \in S}  w_{ij} x_{ij}  + \frac{1}{\Delta (S) }  \sum_{ (i,j) \in S} f_{ij}  & S& \subseteq {C}, \Delta (S) > 0 \label{eq2_3} \\
&y_C \le \frac{1}{\Delta (S) } \sum_{ (i,j) \in S}  w_{ij} x_{ij}-\frac{1}{\Delta (S) }  \sum_{ (i,j) \in S} f_{ij}    & S& \subseteq {C}, \Delta (S) > 0. \label{eq2_4}
\end{align}
\end{subequations}
Now, we use Fourier-Motzkin elimination on $y_C$ to show necessity and sufficiency of the convex hull description. Note that equation (\ref{eq2_2}) together with (\ref{eq2_0}) and (\ref{eq2_1}) give redundant inequalities dominated by $0 \le x_{ij} \le 1$. Similarly,  (\ref{eq2_0}) with  (\ref{eq2_3}) and (\ref{eq2_4}) exactly give equation {\bred (\ref{powerLog})}.

Next, we look at (\ref{eq2_1}) and (\ref{eq2_4}) together, which give
\begin{align}\begin{split}
& \quad\quad\quad \sum_{(i,j) \in C} x_{ij} - (|C|-1)  \le \frac{1}{\Delta (S) } \sum_{ (i,j) \in S}  w_{ij} x_{ij}  - \frac{1}{\Delta (S) }  \sum_{ (i,j) \in S} f_{ij}  \\
& \iff \Delta (S)\sum_{(i,j) \in C} x_{ij} - \Delta (S)(|C|-1)  \le  \sum_{ (i,j) \in S}  w_{ij} x_{ij}  -   \sum_{ (i,j) \in S} f_{ij} \\
& \iff   \sum_{ (i,j) \in S} f_{ij}  \le \Delta (S)(|C|-1) +   \sum_{ (i,j) \in S}  w_{ij} x_{ij} - \Delta (S)\sum_{(i,j) \in S} x_{ij} - \Delta (S)\sum_{(i,j) \in C \setminus S} x_{ij}    \\
& \iff   \sum_{ (i,j) \in S} f_{ij}  \le \Delta (S)(|C|-1)  -   \sum_{ (i,j) \in S} [ \Delta(S) - w_{ij}] x_{ij}  - \Delta (S)\sum_{(i,j) \in C \setminus S} x_{ij}, 
\end{split}\end{align}
for $ S \subseteq {C}, \Delta (S) > 0$. But, this is the  right inequality in  (\ref{project without y}).
Finally, using a similar argument, if we look at (\ref{eq2_1}) and (\ref{eq2_3}) together, we get 
%\begin{align}\begin{split}
%& \quad\quad\quad \sum_{(i,j) \in C} x_{ij} - (|C|-1)  \le \frac{1}{\Delta (S) } \sum_{ (i,j) \in S}  w_{ij} x_{ij}  + \frac{1}{\Delta (S) }  \sum_{ (i,j) \in S} f_{ij}  \\
%& \iff \Delta (S)\sum_{(i,j) \in C} x_{ij} - \Delta (S)(|C|-1)  \le  \sum_{ (i,j) \in S}  w_{ij} x_{ij}  +   \sum_{ (i,j) \in S} f_{ij}  \\
%& \iff - \Delta (S)(|C|-1)  -\sum_{ (i,j) \in S}  w_{ij} x_{ij} +\Delta (S)\sum_{(i,j) \in S} x_{ij} +  \Delta (S)\sum_{(i,j) \in S^c} x_{ij} \le   \sum_{ (i,j) \in S} f_{ij}   \\
%& \iff - \Delta (S)(|C|-1)  +\sum_{ (i,j) \in S} [ \Delta(S) -  w_{ij}] x_{ij} +  \Delta (S)\sum_{(i,j) \in S^c} x_{ij} \le   \sum_{ (i,j) \in S} f_{ij}  
%\end{split}\end{align}
 exactly  left inequality in  (\ref{project without y}) for $ S \subseteq {C}, \Delta (S) > 0$, which concludes the proof.
%Hence, $\mathcal{R} \subseteq \text{proj}_{f, x} \mathcal{P}_{C}$, which concludes the proof.
\end{proof}

%(Note by Hyemin: I used $a$ to denote arcs in the cycle. Also $B_{ij}$ is omitted - the explanation that we can assume $B_{ij} = 1$ WLOG (and the corresponding change of line capacity to $w_{ij}$ needs to be included earlier.) 

Theorem~\ref{convex hull} shows that the inequalities \eqref{project
  without y} are sufficient to define the convex hull of $\cS_C$.  In
Theorem~\ref{facet_prop} we show that each inequality is also
necessary by showing that they always define facets.  
\begin{thm} \label{facet_prop} 
The inequalities \eqref{project without y} are facet-defining for the set
$\text{conv} (\mathcal{S}_{C})$.
\end{thm}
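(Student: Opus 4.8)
The plan is to use the standard indirect (affine-hull) characterization of facets. I would first verify that $\mathrm{conv}(\mathcal{S}_C)$ is full-dimensional, i.e.\ has dimension $2|C|$: starting from $x=\mathbf 1,\ f=0\in\mathcal{S}_C^1$ together with the points $x=\mathbf 1-e_{kl},\ f=0\in\mathcal{S}_C^0$ (one for each arc $(k,l)\in C$), and adjoining single-arc flow perturbations $f=\pm\bar f_{mn}e_{mn}$ that are permitted in $\mathcal{S}_C^0$ whenever the arc $(m,n)$ is left on, one obtains $2|C|+1$ affinely independent points. Granting this, to prove that the right inequality of \eqref{project without y} for a fixed $S$ with $\Delta(S)>0$ is facet-defining, it suffices to show that whenever a linear equation $\sum_{(i,j)\in C}\alpha_{ij}f_{ij}+\sum_{(i,j)\in C}\beta_{ij}x_{ij}=\gamma$ holds at every point of the face $F_S$ cut out by that inequality, the triple $(\alpha,\beta,\gamma)$ must be a scalar multiple of the inequality's coefficient vector; the left inequality then follows by the symmetry $f\mapsto -f$, which maps $\mathcal{S}_C$ to itself. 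Throughout I continue to assume $B_{ij}=1$, so $w_{ij}=\bar f_{ij}$.

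The engine of the argument is a short catalogue of tight points of $F_S$, all lying in $\mathcal{S}_C^1\cup\mathcal{S}_C^0$. A direct computation shows that at $x=\mathbf 1$ the inequality reads $\sum_{(i,j)\in S}f_{ij}\le w(C\setminus S)$, while all arcs being on forces $\sum_{(i,j)\in C}f_{ij}=0$; hence the ``Group~1'' points with $x=\mathbf 1$, $f_{ij}=-\bar f_{ij}$ on $C\setminus S$, and $\sum_{(i,j)\in S}f_{ij}=w(C\setminus S)$ are tight, and because $\Delta(S)>0$ forces $-w(S)<w(C\setminus S)<w(S)$, this slice of the box is genuinely $(|S|-1)$-dimensional. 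Evaluating the candidate equation over this family, in which only $f_S$ moves subject to $\sum_{(i,j)\in S}f_{ij}$ being fixed, forces $\alpha_{ij}$ to be constant on $S$, say $\alpha_{ij}=a$. Next, turning off a single arc $(k,l)\in S$ gives a point of $\mathcal{S}_C^0$ with $f_{ij}=\bar f_{ij}$ on $S\setminus\{(k,l)\}$, $f_{kl}=0$, and the flows on $C\setminus S$ free in the open box; this point is tight, and since the inequality contains no $f$-variable indexed by $C\setminus S$, perturbing $f_{mn}$ for each $(m,n)\in C\setminus S$ keeps the point in $F_S$, whence $\alpha_{mn}=0$ on $C\setminus S$.

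Finally I would pin down $\beta$ and $\gamma$ by telescoping three tight points: let $E_1$ be the Group~1 center point ($x=\mathbf 1$, $\sum_S f=w(C\setminus S)$); for $(k,l)\in S$ let $E_2^{kl}$ be the deletion point above ($x=\mathbf 1-e_{kl}$, $\sum_S f=w(S)-w_{kl}$); and for $(k,l)\in C\setminus S$ let $E_3^{kl}$ be the analogous point with $f=\bar f$ on all of $S$ ($x=\mathbf 1-e_{kl}$, $\sum_S f=w(S)$). Subtracting the equation at $E_1$ from the equations at $E_2^{kl}$ and $E_3^{kl}$ yields $\beta_{kl}=a(\Delta(S)-w_{kl})$ for $(k,l)\in S$ and $\beta_{kl}=a\Delta(S)$ for $(k,l)\in C\setminus S$; substituting back into the equation at $E_1$ gives $\gamma=a\Delta(S)(|C|-1)$. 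Thus $(\alpha,\beta,\gamma)$ is exactly the scalar multiple $a$ of the coefficient vector of the right inequality in \eqref{project without y}, which proves facetness; the degenerate case $S=C$ is handled identically, with $C\setminus S=\emptyset$ making the third step vacuous. The step I expect to be most delicate is the bookkeeping that certifies each catalogued point is genuinely feasible and tight and that the Group~1 slice attains full dimension $|S|-1$; this is precisely where $\Delta(S)>0$ enters, and it explains why the excluded subsets with $\Delta(S)\le 0$ fail to define facets.
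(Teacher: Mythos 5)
Your proof is correct, but it proceeds by a genuinely different route than the paper. The paper uses the direct method: it exhibits explicit families of tight points ($\chi^1(\hat a)$ and $\chi^2(\hat a)$ for $\hat a\in S$, $\chi^3(\tilde a)$ and $\chi^4(\tilde a,\bar a)$ for $\tilde a,\bar a\in C\setminus S$) and counts affinely independent points --- $2|C|$ of them when $|C\setminus S|\neq 1$, with a separate linear-independence computation (solving $\sum_i\lambda_ip^i=\mathbf 0$ explicitly) for the special case $|C\setminus S|=1$. You instead use the indirect characterization: assuming a generic equation $\alpha^{\top}f+\beta^{\top}x=\gamma$ holds on the face, you force $\alpha$ constant on $S$ via the $(|S|-1)$-dimensional slice at $x=\mathbf 1$ (where feasibility pins $f_{ij}=-\bar f_{ij}$ on $C\setminus S$ and tightness fixes $\sum_S f_{ij}=w(C\setminus S)$), kill $\alpha$ on $C\setminus S$ via the free flows at single-deletion points, and recover $\beta$ and $\gamma$ by telescoping $E_1$, $E_2^{kl}$, $E_3^{kl}$; I checked the tightness computations ($w(C\setminus S)$, $w(S)-w_{kl}$, $w(S)$ respectively) and the resulting coefficients $\beta_{kl}=a(\Delta(S)-w_{kl})$ on $S$, $\beta_{kl}=a\Delta(S)$ off $S$, $\gamma=a\Delta(S)(|C|-1)$, and they all come out right. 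Your tight points are essentially the same cast as the paper's ($E_1$ is a point of the $\chi^1$ slice, $E_2$ and $E_3$ are $\chi^2$ and $\chi^3$, and your free-flow perturbation plays the role of $\chi^4$), but the equation-multiple argument buys uniformity: the case $|C\setminus S|=1$, which the paper must treat by a separate and more delicate independence calculation, requires no special handling in your setup, and the case $S=C$ degenerates gracefully. What the paper's direct construction buys in exchange is explicit certificates of the facet's dimension. Two minor points you should record for completeness: the face is proper (e.g.\ $x=\mathbf 0$, $f=\mathbf 0$ is feasible and slack since $\Delta(S)(|C|-1)>0$ for $|C|\geq 2$), which the indirect criterion needs; and in the Group~1 step the affine hull of the slice is the full hyperplane $\{\sum_{(i,j)\in S}f_{ij}=w(C\setminus S)\}$ precisely because $\rho=w(C\setminus S)/w(S)<1$ gives an interior point of the box --- which, as you say, is exactly where $\Delta(S)>0$ is used.
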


\begin{proof}
We prove the result only for the right inequality of \eqref{project
  without y}, as a symmetric argument can be made for the left inequality. 
First, it is easy to establish that $\text{conv} (\mathcal{S}_{C})$ is
full-dimensional by  producing $2|C|+1$ affinely independent points that lie
in $(\mathcal{S}_{C})$.  
Let $\mathcal{F}$ denote the face of $\text{conv} (\mathcal{S}_{C})$ defined by
\begin{align*}
\mathcal{F} = \Bigl\{(f,x) \in \mathcal{S}_{C} : \sum_{(i,j)\in S} f_{ij} +
\sum_{(i,j) \in S} [\Delta (S) - w_{ij}] x_{ij} + \Delta (S)
\sum_{(i,j) \in C \setminus S} x_{ij} = \Delta (S) (|C|-1) \Bigr\}.
\end{align*} 
We produce $2|C|$ affinely independent points in $\cF$ for the case $|C \setminus S| \neq 1$ 
and $2|C|-1$ linearly independent points in $\cF$ for the case $|C \setminus S| =1$, which
establishes the result.

\exclude{
Jeff removed this.  The notation was all wrong, and we can put it back
if we need to do the algebra to show that the points given later are
in fact in $\cF$

The following conditions are satisfied by points $ (\bar{f}, \bar{x}) \in \mathcal{F}$.
\begin{itemize}
\item If $\bar{x}_{ij} = 1 \ \forall (i,j) \in C$: 
\begin{align*}
 \bar{f}(S) &+ |S| \Delta (S) - w(S) + (|C| - |S|) \Delta (S) = \Delta (S) (|C|-1) \\
 \Leftrightarrow \bar{f}(S) &= w(S) - \Delta (S) = w(S) - (w(S) - w(S^C)) \\
                            &= w(S^C)
\end{align*} 
Since $\bar{f}(C) = 0$ in this case, it also implies that $f_{ij} = -w_{ij} \ \forall (i,j) \in S^C$. 
\item If $\bar{x}_{ij} = 1 \ \forall (i,j) \in C \setminus a, \ \bar{x}_a =0$ for some $a \in S$: 
 \begin{align*}
 \bar{f}(S) &+ (|S|-1) \Delta (S) - w(S) + w_a + (|C| - |S|) \Delta (S) = \Delta (S) (|C|-1) \\
 \Leftrightarrow \bar{f}(S) &= w(S) - w_a 
\end{align*} 
\item If $\bar{x}_{ij} = 1 \ \forall (i,j) \in C \setminus a, \ \bar{x}_a =0$ for some $a \in S^C$: 
 \begin{align*}
 \bar{f}(S) &+ (|S|-1) \Delta (S) - w(S) + (|C| - |S|) \Delta (S) = \Delta (S) (|C|-1) \\
 \Leftrightarrow \bar{f}(S) &= w(S) 
\end{align*}
\end{itemize}
}

Let $\hat{a} \in S$, define $\rho = w(C \setminus S)/w(S) \in [0,1)$,
  let $\epsilon$ be a sufficiently small real number, and define the
  $|S|$ points $\chi^1(\hat{a})$ whose components take values
\begin{align*}
f_a &= \left\{ \begin{array}{ll}  
\rho w_a + \epsilon  & \text{if } a \in S \setminus \hat{a}  \\
\rho w_a - (|S|-1) \epsilon  & \text{if } a = \hat{a} \\
- w_a                        & \text{if } a \in C \setminus S \\
\end{array}  \right. \\
x_a &= 1 \ \forall a \in C.
\end{align*}
Define an additional $|S|$ points $\chi^2(\hat{a})$ whose components
take the values
\begin{align*}
f_a &= \left\{ \begin{array}{l l}  
w_a        & \text{if } a \in C \setminus \hat{a} \\
0          & \text{if } a = \hat{a} \\
\end{array}  \right. \\
x_a &= 1 \ \forall a \in C \setminus \hat{a},\\
x_{\hat{a}} &= 0. 
\end{align*}
One can establish that both $\chi^1(\hat{a}), \chi^2(\hat{a}) \in \cF
\ \forall \hat{a} \in S$. 
If $S = C, ( \text{i.e., }|C \setminus S| = 0)$ these constitute $2|C|$ affinely independent points. 
Otherwise, the proof continues by defining 2 $|C \setminus S|$ points in $\cF$. 

For each $\tilde{a} \in C \setminus S$, define a point $\chi^3(\tilde{a})$ 
whose components take values 
\begin{align*} 
  f_a &= \left\{ \begin{array}{l l}  
                               w_a        & \text{if } a \in S \\
                               0          & \text{if } a \in C \setminus S \\
                               \end{array}  \right. \\
  x_a &= 1 \ \forall a \in C \setminus \tilde{a}, \\
  x_{\tilde{a}} &= 0. 
\end{align*}
If $|C \setminus S| \geq 2$, we can define an additional point 
$\chi^4(\tilde{a},\bar{a})$ for some $\bar{a} \in C \setminus S, \ \bar{a} \neq \tilde{a}$ whose components take values  
\begin{align*}
  f_a &= \left\{ \begin{array}{l l}  
                               w_a        & \text{if } a \in S \cup \bar{a} \\
                               0          & \text{if } a \in C \setminus S \setminus \bar{a} \\
                               \end{array}  \right. \\
  x_a &= 1 \ \forall a \in C \setminus \tilde{a}, \\
  x_{\tilde{a}} &= 0.
\end{align*}
It can be shown that both $\chi^3(\tilde{a}), \chi^4(\tilde{a}, \bar{a}) \in \cF \ \forall \tilde{a}, \bar{a} \in C \setminus S$. 
Then the set of points $\{\chi^1(\hat{a}), \chi^2(\hat{a}): \hat{a} \in S\} \cup 
\{\chi^3(\tilde{a}), \chi^4(\tilde{a}, \bar{a}) : \text{ each } a \in C \setminus S \text{ is chosen as } \tilde{a}, \bar{a} \text{ exactly once} \}$ 
is in $\cF$ and contains $2|C|$ points that are affinely independent.  

In the case $|C \setminus S| = 1$, we construct $2|C|-1$ linearly independent points in $\cF$. 
Consider the set of points 
$\{\chi^1(\hat{a}), \chi^2(\hat{a}): \hat{a} \in S\} \cup \{\chi^3(\tilde{a}) : \tilde{a} \in C \setminus S \} \subseteq \cF$ 
and let $p^1, p^2, \cdots, p^{2|C|-1} \in \mathbb{R}^{2|C|}$ denote its $2|C|-1$ elements. 
We prove their linear independence by showing that $\sum_{i = 1}^{2|C|-1} \lambda_i p^i = {\bf 0}, \ \lambda \in \mathbb{R}^{2|C|-1} $  
implies $\lambda_i = 0 \ \forall i = 1, 2, \cdots, 2|C|-1$. 
For simplicity of notation, we use numerical index for edges contained in cycle. 
Let $C = \{1,2, \cdots, n\}$ and $S = \{1,2, \cdots, n-1\}$. 
Then $\sum_{i = 1}^{2|C|-1} \lambda_i p^i = {\bf 0}$ can be equivalently written as the following system of equations: 
\begin{align}
 \sum_{i=1}^{n-1} (\rho w_k + \epsilon) \lambda_i - (n-1) \epsilon \lambda_k + 
  \sum_{i=n}^{2n-1} \lambda_i w_k - \lambda_{n+k} w_k = 0 \quad \forall k = 1,2, \cdots, n-1, \label{eq:lincom_f} \\
 \sum_{i=1}^{2n-1} \lambda_i - \lambda_k = 0 \quad \forall k = n, n+1, \cdots, 2n-1. \label{eq:lincom_x}
 \end{align}
From the $n$ inequalities in \eqref{eq:lincom_x}, 
we obtain $\lambda_{n} = \lambda_{n+1} = \cdots = \lambda_{2n-1} = \sum_{i=1}^{2n-1} \lambda_i = \bar{\lambda}$ 
for some number $\bar{\lambda}$ which implies $\sum_{i=1}^{n-1} \lambda_i = (1-n)\bar{\lambda} $.  
Plugging the former equations into \eqref{eq:lincom_f} results in  
\begin{align*}
 \lambda_k = (((1-\rho)w_k - \epsilon)/\epsilon)\bar{\lambda} \quad \forall k = 1,2, \cdots, n-1. 
\end{align*}
Aggregating these $n-1$ inequalities, we obtain 
\begin{align*}
 & \sum_{k=1}^{n-1} \lambda_i = \sum_{k=1}^{n-1}((1-\rho)/\epsilon)w_k \bar{\lambda} - \sum_{k=1}^{n-1} \bar{\lambda} = (1-n)\bar{\lambda} \\
 & \Leftrightarrow ((1-w(C \setminus S) / w(S)) w(S) / \epsilon = 0 \\
 & \Leftrightarrow ((2w(S) - w(C))/\epsilon)\bar{\lambda} = 0 \\
 & \Leftrightarrow \bar{\lambda} = 0 \\
 & \Rightarrow \lambda_i = 0 \ \forall i = 1, 2, \cdots, 2|C|-1. 
\end{align*}
\exclude{ 
{\bf Here is Hyemin's old one...}

\begin{align*}
 \chi^1(\hat{a}) &= (f, x) : \\ 
& \begin{matrix*}[l]
  & f_a = \left\{ \begin{array}{l l}  
                               \rho w_a + \epsilon          & \text{if } a \in S \setminus \hat{a}  \\
                               \rho w_a - (|S|-1) \epsilon  & \text{if } a = \hat{a} \\
                               - w_a                        & \text{if } a \in S^C \\
                               \end{array}  \right. \\
  & x_a = 1 \ \forall a \in C  
 \end{matrix*} \\
&\text{ where } \epsilon \text{ is sufficiently small and } \rho := w(S^C)/w(S) \in [0,1).\\ 
\chi^2(\hat{a}) &= (f, x) : \\ 
& \begin{matrix*}[l]
  & f_a = \left\{ \begin{array}{l l}  
                               w_a        & \text{if } a \in S \setminus \hat{a} \\
                               0          & \text{if } a \in S^C \cup \hat{a} \\
                               \end{array}  \right. \\
  & x_a = 1 \ \forall a \in C \setminus \hat{a}, \ x_{\hat{a}} = 0. 
 \end{matrix*} \\
 \chi^3(\hat{a}) &= (f, x) : \\ 
& \begin{matrix*}[l]
  & f_a = \left\{ \begin{array}{l l}  
                               w_a        & \text{if } a \in C \setminus \hat{a} \\
                               0          & \text{if } a = \hat{a} \\
                               \end{array}  \right. \\
  & x_a = 1 \ \forall a \in C \setminus \hat{a}, \ x_{\hat{a}} = 0. 
 \end{matrix*} \\
\chi^4(\hat{a}, \bar{a}) &= (f, x) : \\ 
& \begin{matrix*}[l]
  & f_a = \left\{ \begin{array}{l l}  
                               w_a        & \text{if } a \in (S \setminus \hat{a}) \cup \bar{a} \\
                               0          & \text{if } a = \hat{a} \cup (S^C \setminus \bar{a}) \\
                               \end{array}  \right. \\
  & x_a = 1 \ \forall a \in C \setminus \hat{a}, \ x_{\hat{a}} = 0 
 \end{matrix*} \\
& \text{ for } \bar{a} \neq \hat{a}.
 \end{align*}

Consider the following sets containing points of each of these classes. 
\begin{align*}
P_1 &= \{\chi^1(\hat{a}) : \hat{a} \in S \} \\
P_2 &= \{\chi^2(\hat{a}) : \hat{a} \in S^C \} \\
P_3 &= \{\chi^3(\hat{a}) : \hat{a} \in S \} \\   
P_4 &= \{\chi^4(\hat{a}) : \hat{a}, \bar{a} \in S^C \text{and each } a \in S^C \text{ is chosen as } \bar{a} \text{ and } \hat{a} \text{ exactly once}. \} 
\end{align*} 

Then the set $P^1 \cup P^2 \cup P^3 \cup P^4 $ contains $2|C|$ points in $\mathcal{F}$ that are affinely independent which concludes the proof. 
}
\end{proof}

\subsection{Separation}
\label{sec:separation}

\newcommand{\fa}{f_{ij}}
\newcommand{\xa}{x_{ij}}
\newcommand{\wa}{w_{ij}}
\newcommand{\DS}{\Delta(S)}
\newcommand{\fh}{\hat{f}}
\newcommand{\xh}{\hat{x}}
\newcommand{\Kc}{K_C}
\newcommand{\viol}{\mathrm{viol}}
\newcommand{\wt}{\nu}
\newcommand{\wtm}{\bar{\nu}}

We now investigate the {separation} problem over constraints (\ref{project without y}).
Given a fractional point $(\fh, \xh)$, let us first define $K_C =  1 - \sum_{ (i,j) \in C} (1- \xh_{ij}) $. 
We focus on the right inequality in (\ref{project without y}); the left is analyzed similarly. For the given cycle $C$
and $S \subseteq C$ with $\Delta(S) > 0$ the inequality takes the form:
\begin{equation*}
 \sum_{ (i,j) \in S}  \fa + \sum_{ (i,j) \in S}  [\DS - \wa]\xa + {\bred \Delta(S) } \sum_{ (i,j) \in {\bred C }} \xa \leq  {\bred \Delta(S) }  (|C|-1) .
\end{equation*}
For convenience, we rearrange the inequality as follows:
\begin{equation}
\label{validrear}
 \sum_{ (i,j) \in S} (\fa - \wa \xa) + \Delta(S) \Bigl ( {1 - \sum_{ (i,j) \in C} (1- x_{ij})} \Bigr) \leq 0 .
\end{equation}
Our aim is to determine if there exists $S \subseteq C$ with $\Delta (S) > 0$ such that
\begin{equation}
\viol(S):= \sum_{ (i,j) \in S} (\fh_{ij} - w_{ij} \xh_{ij}) + \Delta (S) K_C  > 0 .
\end{equation}
%whenever $K < 1$.
Recalling that $\Delta(S) = w(S) - w(C \setminus S) = 2 w(S) - w(C)$, this can be determined by solving 
\begin{equation}
\label{sep}
\max_{S \subseteq C} \Bigl\{ \sum_{ (i,j) \in S} (\fh_{ij} -  w_{ij} \xh_{ij}) + \Delta (S) K_C : \Delta (S) > 0 \Bigr\}. 
\end{equation}
A violated inequality exists if and only if the optimal value of \eqref{sep} is positive. Introducing binary variables
$z_{ij}$ for $(i,j) \in C$ to indicate whether or not line $(i,j) \in S$, \eqref{sep} can be reformulated as follows:
\begin{equation} \label{knapsack separate}
\begin{split}
& \max_{z \in \{0,1\}^{|C|}} \Bigl\{ \sum_{ (i,j) \in C} (\fh_{ij} - w_{ij} \xh_{ij}) z_{ij} + 
\sum_{ (i,j) \in C}  w_{ij}K_C z_{ij} - \sum_{ (i,j) \in C}  w_{ij}K_C(1-z_{ij}): 
\\ & \quad\quad\quad\quad\quad\quad\quad\quad\quad\quad\quad\quad\quad\quad \sum_{ (i,j) \in C}  w_{ij}z_{ij} - \sum_{ (i,j) \in
C}  w_{ij} (1-z_{ij}) > 0 \  \Bigr\}  \\
 & = -w(C)K_C + \max_{z \in \{0,1\}^{|C|}} \Bigl\{ \sum_{ (i,j) \in C} \left ( \fh_{ij} -  w_{ij} \xh_{ij} + 2w_{ij}K_C  \right ) z_{ij} : 
 \sum_{ (i,j) \in C}  w_{ij}z_{ij}  >  \frac12 w(C) \Bigr\} .
\end {split}
\end{equation}
Note that here we do need the condition that $ \Delta (S) > 0$, which yields a knapsack problem. A similar minimization problem can be posed to separate left inequalities.

There is a necessary condition for a cycle $C$ to have a violating inequality of form (\ref{project without y}) given in Proposition \ref{necessary K}.
\begin{prop} \label{necessary K}
Given $(\fh, \xh)$ for a cycle $C$, if $K_C =  1- \sum_{ (i,j) \in C} (1- \xh_{ij}) \le 0$, then  inequalities  (\ref{project without y}) are not violated.
\end{prop}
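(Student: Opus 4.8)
The plan is to work directly from the rearranged form (\ref{validrear}) of the right inequality, together with the symmetric rearrangement of the left inequality, and to exploit the fact that the fractional point $(\hat{f},\hat{x})$ being separated lies in the relaxation and hence satisfies the flow bounds (\ref{powerLog}). Recall that after scaling $B_{ij}=1$ we have $w_{ij}=\bar f_{ij}$, so (\ref{powerLog}) reads $-w_{ij}\hat{x}_{ij}\le \hat{f}_{ij}\le w_{ij}\hat{x}_{ij}$ for every $(i,j)\in C$. From this, the termwise bounds $\hat{f}_{ij}-w_{ij}\hat{x}_{ij}\le 0$ and $\hat{f}_{ij}+w_{ij}\hat{x}_{ij}\ge 0$ hold for each edge.

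First I would handle the right inequality. By definition its violation is exactly $\viol(S)=\sum_{(i,j)\in S}(\hat{f}_{ij}-w_{ij}\hat{x}_{ij})+\Delta(S)K_C$. The first sum is a sum of nonpositive terms by the bound above, hence $\le 0$. For the second term, separation only considers $S$ with $\Delta(S)>0$, and by hypothesis $K_C\le 0$, so $\Delta(S)K_C\le 0$. Adding the two nonpositive quantities gives $\viol(S)\le 0$, so no $S$ with $\Delta(S)>0$ can yield a violated right inequality.

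Next I would treat the left inequality symmetrically. Rearranging it exactly as in (\ref{validrear}) and using the identity $\sum_{(i,j)\in C}\hat{x}_{ij}-(|C|-1)=K_C$ shows that its violation equals $-\sum_{(i,j)\in S}(\hat{f}_{ij}+w_{ij}\hat{x}_{ij})+\Delta(S)K_C$. Here the first sum is nonpositive because each $\hat{f}_{ij}+w_{ij}\hat{x}_{ij}\ge 0$, and again $\Delta(S)K_C\le 0$ since $\Delta(S)>0$ and $K_C\le 0$; hence the whole expression is $\le 0$ as well.

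There is essentially no technical obstacle here; the statement reduces to a sign-chasing argument. The only points requiring care are recognizing that the separated point satisfies (\ref{powerLog}), so that the termwise sign bounds are available, and correctly matching the two sign conventions of the left and right families in (\ref{project without y}) to the definition of $K_C$. Once these are lined up, the conclusion that both contributions to $\viol(S)$ are nonpositive whenever $\Delta(S)>0$ and $K_C\le 0$ is immediate.
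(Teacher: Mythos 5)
Your proposal is correct and is essentially the paper's own argument: both rest on the termwise bounds $\hat{f}_{ij} - w_{ij}\hat{x}_{ij} \le 0$ and $\hat{f}_{ij} + w_{ij}\hat{x}_{ij} \ge 0$ implied by (\ref{powerLog}), combined with the restriction $\Delta(S) > 0$ (equivalently $\sum_{(i,j)\in S} w_{ij} > \tfrac{1}{2}w(C)$, the knapsack constraint), to conclude the violation is nonpositive. The only difference is organizational: the paper phrases the bound through an optimal solution of the knapsack reformulation (\ref{knapsack separate}) and splits into the cases $K_C < 0$ and $K_C = 0$, whereas you bound $\mathrm{viol}(S) = \sum_{(i,j)\in S}(\hat{f}_{ij} - w_{ij}\hat{x}_{ij}) + \Delta(S)K_C$ directly as a sum of two nonpositive terms, which dispatches both cases at once and also writes out the left-inequality computation that the paper leaves as ``a similar argument.''
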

\begin{proof}
Consider right inequalities first. 
If $K_C < 0$, then given an optimal solution $z$ to problem (\ref{knapsack separate}), we have
 $\sum_{ (i,j) \in C} \left ( \fh_{ij} -  w_{ij} \xh_{ij} + 2w_{ij}K_C  \right ) z_{ij} < w(C) K_C$ since $\sum_{ (i,j) \in
 C}  w_{ij}z_{ij}  >  \frac12 w(C)$ and ${f_{ij}} \le  w_{ij} x_{ij}$. Therefore, no violating inequality exists.
If $K_C = 0$, then given an optimal solution $z$ to problem (\ref{knapsack separate}), we have $\sum_{ (i,j) \in C} \left
( \fh_{ij} -  w_{ij} \xh_{ij} + 2w_{ij}K_C  \right ) z_{ij} \le  0 = w(C) K_C$ since ${\fh_{ij}} \le  w_{ij} \xh_{ij}$. Therefore, no violating inequality exists.
A similar argument can be used to show that the requirement $K_C > 0$ is necessary for a left inequality to be violating.
\end{proof}

Although \eqref{knapsack separate} formulates the separation problem of inequalities \eqref{project without y} as a
knapsack problem, the special structure of this knapsack problem enables it to be solved efficiently.  In fact, we could
solve a linear program derived from the extended formulation \eqref{Extended Weak} to solve the separation problem over  
$\text{conv} (\mathcal{S}_C)$.  However, we next show how separation of the cycle inequalities can be accomplished
efficiently in closed form.  Define 
\[ S^*_C = \{ (i,j) \in C : \fh_{ij} - \wa \xh_{ij} + 2\wa \Kc > 0 \}. \]
The following proposition shows that $S^*_C$ is the only subset that needs to be considered when solving the separation
problem for cycle $C$.

\begin{prop}
\label{sopt}
Assume $\Kc > 0$. If there is any $S \subseteq C$ with $\Delta(S) > 0$ and $\viol(S) > 0$, then the separation problem \eqref{sep} is
solved by $S^*_C$.
\end{prop}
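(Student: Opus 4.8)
The plan is to reduce the constrained separation problem \eqref{sep} to an unconstrained one by substituting $\Delta(S) = 2w(S) - w(C)$ into $\viol(S)$ and regrouping the sum edge-by-edge, writing
\[
\viol(S) = \sum_{(i,j) \in S} \bigl(\fh_{ij} - \wa\xh_{ij} + 2\wa\Kc\bigr) - w(C)\Kc .
\]
Since the term $-w(C)\Kc$ is a constant independent of $S$, maximizing $\viol(S)$ over all subsets of $C$ (temporarily ignoring the sign constraint on $\Delta(S)$) amounts to collecting exactly those edges whose per-edge coefficient $\fh_{ij} - \wa\xh_{ij} + 2\wa\Kc$ is strictly positive. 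This is precisely the definition of $S^*_C$. Hence $S^*_C$ maximizes $\viol$ over all of $2^C$, and in particular $\viol(S^*_C) \ge \viol(S)$ for every $S \subseteq C$.

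The remaining, and only nontrivial, task is to show that $S^*_C$ actually obeys the constraint $\Delta(S^*_C) > 0$, so that it is feasible for \eqref{sep}. Given a feasible $S$ with $\viol(S) > 0$, the previous paragraph yields $\viol(S^*_C) \ge \viol(S) > 0$, equivalently $\sum_{(i,j)\in S^*_C}(\fh_{ij} - \wa\xh_{ij} + 2\wa\Kc) > w(C)\Kc$. I would then argue by contradiction: suppose $\Delta(S^*_C) \le 0$, i.e.\ $w(S^*_C) \le \tfrac12 w(C)$. The key observation is that each per-edge coefficient is bounded above by $2\wa\Kc$, because the bound constraint \eqref{powerLog} gives $\fh_{ij} \le \wa\xh_{ij}$ (recall $w_{ij}=\bar f_{ij}$ after the scaling $B_{ij}=1$), so $\fh_{ij}-\wa\xh_{ij}\le 0$. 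Summing this bound over $S^*_C$ and using $\Kc > 0$ together with $w(S^*_C) \le \tfrac12 w(C)$ gives $\sum_{(i,j)\in S^*_C}(\fh_{ij}-\wa\xh_{ij}+2\wa\Kc) \le 2\Kc\,w(S^*_C) \le \Kc\,w(C)$, contradicting the strict inequality just obtained. Therefore $\Delta(S^*_C) > 0$.

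Combining the two parts completes the argument: $S^*_C$ is feasible for \eqref{sep}, and since it maximizes $\viol$ even without the constraint $\Delta(S)>0$, it is \emph{a fortiori} optimal for the constrained problem. I expect the feasibility step to be the main obstacle, since the unconstrained maximizer $S^*_C$ is immediate but there is no a priori reason it should respect $\Delta(S) > 0$; the crux is recognizing that the hypotheses $\Kc > 0$ and $\fh_{ij} \le \wa\xh_{ij}$ are exactly what convert "the violation at $S^*_C$ is positive" into the weight inequality $w(S^*_C) > \tfrac12 w(C)$ that feasibility demands.
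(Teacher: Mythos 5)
Your proposal is correct and takes essentially the same approach as the paper: both arguments rest on the observation that $S^*_C$ maximizes $\mathrm{viol}(S)$ unconstrained (it collects exactly the edges with strictly positive per-edge coefficient $\hat f_{ij} - w_{ij}\hat x_{ij} + 2w_{ij}K_C$), and both use $\hat f_{ij} \le w_{ij}\hat x_{ij}$ together with $K_C > 0$ to show that $\Delta(S^*_C) \le 0$ would force $\mathrm{viol}(S^*_C) \le 0$, which is incompatible with the hypothesized violated set. Your contradiction-based packaging of the feasibility step is merely a streamlined rearrangement of the paper's second case (the paper detours through optimal solutions containing $S^*_C$ before reaching the same conclusion), not a genuinely different route.
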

\begin{proof}
First, suppose that $\Delta(S^*_C) > 0$, so that $S^*_C$ is a feasible solution to \eqref{sep}.  Then, since by
construction it contains every element in $C$ that has a positive contribution to the objective $\viol(S)$, this is an
optimal solution to \eqref{sep}.  Now, suppose $\Delta(S^*_C) \leq 0$, i.e., $2w(S^*_C) - w(C) \leq 0$.  Then,
\[ \viol(S^*_C) = \sum_{(i,j) \in S^*_C} (\fh_{ij} - \wa \xh_{ij}) + (2w(S^*_C) - w(C))\Kc \leq 0 \]
because the first term is nonpositive due to $\fh_{ij} \leq \wa \xh_{ij}$ and the second term is nonpositive by assumption.  Now,
because each element $(i,j) \in S^*_C$ contributes a positive amount to the objective in \eqref{sep} and also a positive
amount to the constraint, there must exist an optimal solution of \eqref{sep} that contains $S^*_C$ as a subset. But,
any set $T$ that contains  $S^*_C$ as a subset has $\viol(T) \leq \viol(S^*_C) \leq 0$, since 
elements $(i,j) \notin S^*_C$ contribute a non-positive amount to the objective.  Thus, in the case $\Delta(S^*_C) < 0$,
there is no $S \subseteq C$ with $\Delta(S) > 0$ and $\viol(S) > 0$.
\end{proof}

Recall from Proposition \ref{necessary K}  that $\Kc > 0$ is a necessary condition for a violated inequality to exist
from cycle $C$.  Thus, for a given cycle $C$ with $\Kc > 0$, a violated inequality exists for this cycle if and only if:
\begin{equation}
\label{violcond}
 \sum_{(i,j) \in C}\Bigl( (\fh_{ij} - \wa \xh_{ij} + 2 \wa \Kc)_+ - w_{ij} \Kc \Bigr) > 0.
 \end{equation}
 where $(\cdot)_+ = \max \{\cdot,0\}$.
The separation problem then reduces to a search for a cycle $C$ having $\Kc > 0$ and that satisfies \eqref{violcond}. 
%In the next section, we discuss techniques for searching for a cycle $C$ satisfying these conditions.

%%
\section{Algorithms} \label{section:alg}

In this section, we describe our algorithmic framework for solving the DC switching problem, beginning with section \ref{sec:overview}
where we describe the overall algorithm. 
Section \ref{sec:separationalg} describes algorithms we implemented for separating over cycle inequalities \eqref{project without y}
for a fixed cycle $C$, and section \ref{sec:generatecycles} describes our procedure for generating a set of cycles over
which we will perform the separation. 
%and Section \ref{sec:separationalg} proposes an iterative procedure to strengthen the LP
%relaxation of the Angle Formulation \eqref{eq:Fisher-Form} based on our cycle separation.

% an efficient procedure to generate a cycle basis and more 

\subsection{Overall Algorithm}
\label{sec:overview}
The overall structure of the proposed algorithm is shown in Algorithm~\ref{alg:OverallAlgorithm}. The preprocessing
phase of the algorithm aims to add cycle inequalities to strengthen the LP relaxation of the Angle Formulation
\eqref{eq:Fisher-Form}. In particular, we first generate a set of cycles over the original power network, and solve the
LP relaxation of \eqref{eq:Fisher-Form}. Then, a separation algorithm finds all the cycle inequalities \eqref{project
without y} that are violated by the LP solution over the generated set of cycles. These violated inequalities are added
to the LP relaxation as cuts. This procedure is iterated for a number of times to strengthen the LP relaxation, which is
then fed to the MIP solver (we use CPLEX). We prefer to implement our separation in this ``cut-and-branch'' manner in
order to investigate the utility of the cutting planes when combined with all advanced features of modern MIP software.
An alternative approach would be to the use User Cut callback facility of CPLEX. However, this procedure disables many
advanced features of the solver such as dynamic search.  {\bred We implemented a version of our cutting planes using the CPLEX callback features and found that on average the performance was around 30\% worse than with our cut-and-branch Algorithm~\ref{alg:OverallAlgorithm}.}
\begin{algorithm}
\caption{Overall Algorithm for DC Switching}
\label{alg:OverallAlgorithm}
%\begin{algorithmic}
\begin{enumerate}
\item Preprocessing: %Strengthen LP relaxation of Angle Formulation \eqref{eq:Fisher-Form} with cycle constraints \eqref{project without y}.
	\begin{enumerate}
		\item Generate a set $\Gamma$ of cycles (Cycle basis generation Algorithm \ref{Cycle basis generation.}).
          \item  Strengthen LP relaxation of Angle Formulation \eqref{eq:Fisher-Form} by adding violated cycle inequalities \eqref{project without y} from each cycle $C \in \Gamma$ (Separation Algorithms \ref{alg:separation} or \ref{more cuts heuristic.}).
	\end{enumerate}
\item Solve the Angle Formulation with added cuts using CPLEX.
\end{enumerate}
%\end{algorithmic}
\end{algorithm}

\subsection{Separation Algorithms}\label{sec:separationalg}
%The valid inequalities given in the preceeding section is exponential in the size of the cycles. Hence, it is not advisable to add all of them. In practice, these cuts are added to the LP relaxation of the problem as needed. In general, these cuts are added at the root relaxation to tighten the LP relaxation for a few rounds, say $R$. The steps of the procedure is given in Algorithm \ref{LP relaxation.}.

%Finding violated inequalities needs to be explained in detail. 
For a given LP relaxation solution, the separation algorithm implements the ideas presented in Section~\ref{sec:separation} to identify all violated cycle constraints of the form \eqref{project without y} for a predetermined set of cycles. The procedure is summarized in Algorithm \ref{alg:separation}. 

\begin{algorithm}
\caption{Separation Algorithm}
\label{alg:separation}
\begin{algorithmic}
\STATE Given a set $\Gamma$ of cycles and a LP relaxation solution $(\fh, \xh)$. 
\FOR {every cycle $C\in\Gamma$}
\STATE Compute $\Kc =  1 - \sum_{a \in C} (1 - \xh_{a})$.
%\STATE  $S=\{a\in C :  \fh_a -  w_{a} \xh_{a} + 2w_{a}\Kc  \ge 0\}$.
\IF {$\Kc > 0$}
\STATE For each $a\in C$, compute $z_{a}=\begin{cases} 1 &  \text{if }  \fh_a -  w_{a} \xh_{a} + 2w_{a}\Kc  \ge 0 \\ 0 & \text{otherwise} \end{cases}$
	\IF {$\sum_{a\in C} w_a z_a>\frac{1}{2}w(C)$ \AND $\sum_{a \in C}\Bigl( (\fh_a - w_{a} \xh_a + 2 w_{a} \Kc)_+ - w_a \Kc \Bigr) > 0$}
		\STATE A violated cycle inequality for $C$ is found.
	\ENDIF
\ENDIF 
\ENDFOR
\end{algorithmic}
\end{algorithm}

%As discussed in Section \ref{sec:valid}, separation of cycle inequalities (\ref{project without y}) can be accomplished as follows. For a cycle $C$ and a solution $(f_{ij},x_{ij})_{(i,j)\in C}$, 
%\begin{equation} \label{heuristic separate}
%z_{ij} = \begin{cases} 1 &  \text{if }  \frac{f_{ij}}{  B_{ij}} -  w_{ij} x_{ij} + 2w_{ij}K  \ge 0 \\ 0 & \text{otherwise} \end{cases}
%\end{equation}
%The resulting cut is added only if constraint $\sum_{ (i,j) \in C}  w_{ij}z_{ij}  >  \frac12 w(C) $ is satisfied.

Algorithm \ref{alg:separation} generates a {\it single} violating inequality for each cycle, if such a violated inequality exists.  However, the method can be extended to find all violating inequalities for a cycle. % which satisfy property (\ref{heuristic separate}).
This procedure is summarized in  Algorithm \ref{more cuts heuristic.}, which uses a recursive subroutine described in Algorithm \ref{recursion.}.

\begin{algorithm}
\caption{Finding all valid inequalities.}
\label{more cuts heuristic.}
\begin{algorithmic}
\STATE Given a cycle $C$, define $v_{ij} = \fh_{a}-  w_{a} \xh_{a} + 2w_{a}\Kc $    for $a \in C$
\STATE Set $S = \{a \in C : v_{a} \ge 0\} $ and denote $C \setminus S = \{a_1, \dots, a_n\}$
\STATE Calculate $v(S) = \sum_{a \in S} v_{a}$ and $w(S) = \sum_{a \in S} w_{a}$
\STATE Recursion($S$, $0$)
\end{algorithmic}
\end{algorithm}

\begin{algorithm}
\caption{Recursion($S$, $k$)}
\label{recursion.}
\begin{algorithmic}
\IF{ $v(S) \le w(C)\Kc $ or $k = n$}
\STATE  Stop.
\ENDIF
\IF{ $v(S) >  w(C)\Kc $ and  $w(S) >\frac12w(C) $}
\STATE  A violating inequality is found.
\ENDIF
\FOR{$l=k+1, \dots, n$}
\STATE Recursion($S \cup \{a_l\}$, $l$)
\ENDFOR
\end{algorithmic}
\end{algorithm}

\subsection{Cycle Basis Algorithm}\label{sec:generatecycles}
%Note that in order this idea to work, we need to have a set of predetermined cycles to begin with.
{\bred
This section discusses how to generate a set of cycles for the preprocessing phase of Algorithm~\ref{alg:OverallAlgorithm}.  The number of cycles in
a graph $G=(V,E)$ grows exponentially in $|V|$, so in computations, finding all cycles is not efficient. Instead, we find  a cycle basis for the original power network and use the cycle basis to generate cycles for separation.  There are many algorithms for finding a cycle basis \cite{kavitha.et.al:09}.
We use a simple algorithm based on the LU decomposition of the incidence matrix of the graph $G$.  This choice of algorithm was arbitrary.  We have no reason to believe that this specific choice of algorithm to create a cycle basis has a significant impact on performance of cutting planes.   For completeness, we state the algorithm below.
}

% There are many algorithms for generating a cycle basis of a network \cite{kavitha.et.al:09}. In the following, we introduce Algorithm \ref{Cycle basis generation.} for such a purpose. % The details of the algorithm that we employ are given in Appendix \ref{app:generatecycles}.

Consider a directed graph $G=(V, E)$ with vertex set $V$ and arc set $A$. Let $|V|=n$ and $|E|=m$.
We define edge-node incidence matrix $A$ as 
\begin{equation}  \label{DefineA matrix}
A_{(i,j),k} = \begin{cases}
1 & \text{if } i=k \\
-1 & \text{if } j=k \\
0 &	\text{otherwise}
\end{cases}
\end{equation}
Then, assuming that $G$ is connected, Algorithm \ref{Cycle basis generation.} can be used to find a cycle basis. The correctness of the algorithm is proved in Appendix \ref{app:generatecycles}.
\begin{algorithm} 
	\caption{Cycle basis generation.}
	\label{Cycle basis generation.}
	\begin{algorithmic}
		\STATE Define edge-node incidence matrix $A$ of directed graph $G$ as given in (\ref{DefineA matrix}).
		\STATE Carry out LU decomposition of A with partial pivoting to compute $PA = LU$ with  a unit lower triangular matrix $L$.
		\STATE Last $m - n + 1$ rows of $L^{-1}P$, denoted by $C_b$, gives a cycle basis.
	\end{algorithmic}
\end{algorithm}

{\bred 
Given an initial set of cycles $\Gamma$ coming from the cycle basis, the following
procedure is used to generate additional cycles from which we may apply the separation procedures \ref{alg:separation} and 
\ref{more cuts heuristic.}.  Any pair of cycles in $\Gamma := \mathcal{C}^0$ that share at least one common edge can be combined to form a new cycle by removing the common edges. Denote $\bar{\mathcal{C}^0}$ as the set of all the new cycles thus generated from $\mathcal{C}^0$. Then, the set $\mathcal{C}^1:=\mathcal{C}^0\cup\bar{\mathcal{C}^0}$ has more cycles than the cycle basis $\mathcal{C}_b$. This process can be repeated to generate sets $\mathcal{C}^{k+1}:= \mathcal{C}^k\cup\bar{\mathcal{C}^k}$ for $k\geq 1$.}

Given a set of cycles $\Gamma$, we can use Algorithms \ref{alg:separation} or \ref{more cuts heuristic.} to identify and add all violated cycle
inequalities for that set of cycles  to the LP relaxation of the Angle Formulation. We solve this strengthened LP
relaxation again and add further violated cycle inequalities. This procedure can be carried out in several iterations (five times for our experiments) to
produce a strengthened LP relaxation that is eventually passed to the MIP solver. %Algorithm \ref{LP relaxation.} describes this iterative procedure.% to generate cycle inequalities \eqref{project without y} that are violated by the solutions of the LP relaxation of the Angle Formulation. 

\exclude{

%%% Jim comment: I chose to elave this out, as it seems to be covered well enough in algorithm 1.
\begin{algorithm}
\caption{Strengthening of LP relaxation.}
\label{LP relaxation.}
\begin{algorithmic}
\STATE Given a set $\Gamma$ of cycles.
\STATE $r\gets$ 1  \ \quad //iteration count
\STATE $\mathcal{V} \gets \emptyset$  \quad //list of valid inequalities
 \WHILE{$r \le R$}
\STATE Solve LP relaxation of Angle Formulation \eqref{eq:Fisher-Form}.
%\STATE Find violated inequalities (\ref{project without y}) for a set of predetermined cycles $\mathcal{C}$ and store them in the list $\mathcal{V}_r$.
\STATE Find violated inequalities \eqref{project without y} using Algorithm \ref{alg:separation} or \ref{more cuts heuristic.}.
\STATE Store identified inequalities in the list $\mathcal{V}_r$.
\IF{ $\mathcal{V}_r = \emptyset$}
\STATE  $r\gets R$.
\ELSE
\STATE Add  cuts  in $\mathcal{V}_r$ to LP relaxation.
\STATE Update $\mathcal{V} \gets \mathcal{V} \cup \mathcal{V}_r$.
\ENDIF
\STATE Set $r\gets r+1$.
 \ENDWHILE
\STATE Add cuts in $\mathcal{V}$ to MIP formulation \eqref{eq:Fisher-Form}.
\end{algorithmic}
\end{algorithm}
}

\section{Computational Results} \label{section:comp}
%4. Computational results: we need a couple smaller instances and a 118-bus and 300-bus systems. It seems that for most systems we tested, the number of bb nodes is smaller. We give some detailed analysis of the results in terms of computation time and depth of the bb trees, e.g.
In this section, we present extensive computational studies that demonstrate the effectiveness of our proposed cut-and-branch algorithms on the DC-OTS problem.  Section \ref{Instance Generation} explains how the test instances are generated. %Section \ref{DC OPF comp} compares the performance of the traditional angle based DC OPF formulation with the proposed cycle based DC OPF formulation. The results show that the cycle formulation is consistently faster than the angle formulation, especially for large instances. 
Section \ref{DCswitching comp} compares the default branch-and-cut algorithm of CPLEX with two algorithms that employ
the cycle inequalities \eqref{project without y}.   The first algorithm generates inequalities from cycles in one fixed
cycle basis, and the other generates inequalities from a larger set of cycles.  The results show that the proposed
algorithm with cutting planes separated from more cycles consistently outperforms the default algorithm in terms of the
size of the branch-and-bound tree, the total computation time, and the number of instances solved within the time limit. 

For all experiments, we use a single thread in a 64-bit computer with Intel Core i5 CPU 3.33GHz processor and 4 GB RAM. Codes are written in the  \texttt{C\#} language. 
%Considering the operational characteristic of the problem and limited computational budget, relative optimality gap is set to a moderate $0.1\%$ for all MIPs solved using CPLEX 12.4 \cite{Cplex}.
Considering that the transmission switching problem is usually solved under a limited time budget, the relative
optimality gap is set to a moderate amount of $0.1\%$ for all MIPs solved using CPLEX 12.4 (\cite{Cplex}).  We set a time
limit of one hour in all experiments.

%We test different formulations given in the previous sections using standard test instances from MATPOWER \cite{Matpower}. 

\subsection{Instance Generation}\label{Instance Generation}
%For the DC OPF experiments, we use the standard IEEE test instances obtained from MATPOWER \cite{MATPOWER} and other test instances from the literature \cite{Lesieutre, Blumsack} \footnote{All the instances can be downloaded from \url{http://www.isye.gatech.edu/.....}. }. 
%%The instances range from a 3-bus system to the real-world sized system with 3375 buses (see Table \ref{without switch}). 
%We reduce the transmission line flow limits of the original instances, whenever they are too large, to ensure that the transmission flow constraints are effective. %We also include other test instances from the literature, e.g. \textsf{case3Les} from \cite{Lesieutre} and \textsf{case118B} from \cite{Blumsack}. 

Our computational experiments focus on instances where the solution of
the DC-OTS is significantly different than the solution of the DC-OPF.
In addition to selecting instances where switching made an appreciable
instance, we selected instances whose network size was large enough so
that the instances were not trivial for existing algorithms, but small
enough to not be intractable.  The 118-bus instance \textsf{case118B}
generated in \cite{Blumsack} turns out to be suitable for our
purposes, and we also modified the 300-bus instance \textsf{case300}
so that transmission switching produces meaningfully different
solutions from the OPF problem without switching.  Furthermore, in
order to extensively test the effectiveness of the proposed cuts and
separation algorithms, we generate the following five sets of
instances based on \textsf{case118B} and \textsf{case300}:

\exclude{
For the DC Switching experiments, the 118-bus instance \textsf{case118B} generated in \cite{Blumsack} turns out to be a much harder instance than the standard 118-bus instance \textsf{case118} from MATPOWER. Therefore, we use \textsf{case118B}. We also change the 300-bus instance \textsf{case300}
%the test instances are based on the 118-bus instance (\textsf{case118B}) from \cite{Blumsack} and the 300-bus instance (\textsf{case300}) from MATPOWER. 
so that transmission switching produces meaningfully different solutons from the OPF problem without switching. Furthermore, in order to extensively test the effectiveness of the proposed cuts and separation algorithms, we generate the following five sets of instances based on \textsf{case118B} and \textsf{case300}:
}

%We notice that most of these test instances from MATPOWER have large transmission line flow limits, which either make the transmission flow constraints redundant or prevent transmission switching from being cost effective. Therefore, we considerably reduce the transmission line limits in the instances so that the transmission line constraints are effective and the transmission switching produces meaningfully different solutons from the OPF problem when switching is not considered. Besides the MATPOWER instances, 

%We use standard IEEE instances obtained from MATPOWER in our experiments, especially for the DC OPF experiments. 

%For the DC switching experiments, our test results are based on the 118-bus instance (\textsf{case118B}) from \cite{Blumsack} and the 300-bus instances we generated from modifying the 300-bus instance (\textsf{case300}) in the MATPOWER library. 
%In these experiments, switching transmission lines results in a significant cost reduction (about 20\%) for the 118-bus instances.

\begin{itemize}
\item
\text{Set 118\_15}: We generate 35 instances by modifying \textsf{case118B}, where the load at each bus of the original \textsf{case118B} is increased by a discrete random variable following a uniform distribution on $[0,15]$.
\item
\text{Set 118\_15\_6}: To each of the instances in \text{Set 118\_15}, we randomly add 5 new lines to the power network, each creating a 6-cycle. The transmission limits for the lines in the cycle are set to 30\%  of the smallest capacity $\bar f_{ij}$ in the network, and $B_{ij}$ is chosen randomly from one of the lines which is already in the original network.  
%\jeffcomment{Is $B_{ij}$ the same for all of the added arcs?}
\item
\text{Set 118\_15\_16}: The instances are constructed the same as in
\text{Set 118\_15\_6}, except that a 16-cycle is created by adding 5
new transmission lines.
\item
\text{Set 118\_9G}:   %In this class of instances based on the \textsf{case118B} from \cite{Blumsack}, 
We generate 35 different instances from \textsf{case118B}, where the
original load at each bus is increased by a discrete random variable
following a uniform distribution on $[0,9]$. Furthermore, the
generation topology of the network is changed. In particular, a
generator located at bus $i$ is moved to one of its neighboring buses
or stays at its current location with equal probability. %an element of the set   $\delta(i) \cup \{i\}$ with equal probabilities.
\item
\text{Set 300\_5}: We generate 35 different instances from \textsf{case300}, where the original load at each node is
incremented by a discrete random variable following a uniform distribution on $[-5, 5]$. Also, eight generators are turned
off and {\bred the cost coefficients of remaining generators are updated to be similar to the objective coefficients in  \cite{Blumsack}}.
 Finally, more restrictive transmission line limits are imposed.

\end{itemize}

%\jeffcomment{I think we should say that we make the instances  available (in IEEE format)}

These instances can be downloaded from {\url{https://sites.google.com/site/burakkocuk/research}}.

\subsection{DC Transmission Switching}\label{DCswitching comp}

We now investigate the computational impact of using the proposed valid inequalities within the proposed cut-and-branch
procedure. We compare the following three solution procedures:
\begin{enumerate}
\item
 The angle formulation \eqref{eq:Fisher-Form} solved with CPLEX, abbreviated by Default.
%\item
%Angle Formulation with valid inequalities found by solving Knapsack problem (\ref{knapsack separate}) exactly, abbreviated by Knapsack.
%\item
%Angle Formulation with valid inequalities found by solving Knapsack problem according to  heuristic (\ref{heuristic separate}), abbreviated by Heuristic.
%\item
%Angle Formulation with valid inequalities found by solving Knapsack problem according to the heuristic with more cuts (Algorithm \ref{more cuts heuristic.}) and one cycle basis, abbreviated by Heuristic2.
%\item
%Angle Formulation with valid inequalities found by solving Knapsack problem according to the heuristic with more cuts (Algorithm \ref{more cuts heuristic.}) and significantly more cycles than a cycle basis, abbreviated by Heuristic2-MoreCycles.
\item
The angle formulation \eqref{eq:Fisher-Form} with valid inequalities
\eqref{project without y} found using cycles coming from a single cycle basis, abbreviated by BasicCycles.
\item
{\bred   
  The angle formulation \eqref{eq:Fisher-Form} with valid inequalities \eqref{project without y} coming from more cycles than a cycle basis, abbreviated by MoreCycles.  The procedure for generating additional cycles for separation is discussed in Section~\ref{sec:generatecycles}.  We use the set of cycles $\mathcal{C}^2$ for the 118-bus networks, where $|\mathcal{C}^2| \approx 3500$. For the 300-bus networks, $\mathcal{C}^2$ has more than $37,000$ cycles, which makes the separation procedure quite computationally expensive.  For the 300-bus networks, we select $10\%$ of the cycles in $\mathcal{C}^2$ randomly for separation.}
\end{enumerate}
{\bred We conducted preliminary experiments comparing Algorithms \ref{alg:separation} and \ref{more cuts heuristic.} 
when using the valid inequalities \eqref{project without y}, and found Algorithm \ref{more cuts heuristic.} yielded
consistently better performance. Therefore, we use Algorithm \ref{more cuts heuristic.} as the separation algorithm for inequalities
\eqref{project without y} in both BasicCycles and
MoreCycles.}

Tables \ref{tb:118-15}-\ref{tb:300-5} show the computational results
for the five sets of test instances described in Section \ref{Instance
  Generation}.  {\bred To measure the impact of the cuts on closing the integrality gap, we use the following objective values: 
  \begin{itemize}
    \item $z_{LP}$: the objective value of the
      LP relaxation at the root node without inequalities \eqref{project without y} and without CPLEX cuts;
    \item $z_{LP}^{cuts}$: the objective value of the LP relaxation at the root node with inequalities \eqref{project without y} and without CPLEX cuts; 
    \item $z_{LP}^{root}$: the objective value of the LP relaxation at the root node with inequalities \eqref{project without y} and with CPLEX cuts; 
    \item $z_{IP}$: the objective value of the final integer solution of
      the switching problem.
  \end{itemize}
The integrality gap measures reported in the tables are defined as
\begin{itemize}  
    \item ``Initial LP Gap (\%)'' := $100\%\times (z_{IP}-z_{LP})/z_{IP}$;
    \item ``Gap Closed by Cuts (\%)'' := $100\%\times (z_{LP}^{cuts}-z_{LP})/(z_{IP}-z_{LP})$;
    \item ``Root Gap Closed (\%)'' := $100\%\times (z_{LP}^{root}-z_{LP})/(z_{IP}-z_{LP})$.
\end{itemize}
Other performance metrics reported in the tables are the average number of valid inequalities generated 
by the proposed algorithm (row ``\# Cuts''); the average preprocessing time
for generating valid inequalities (``Preprocessing Time''), which
includes the time for solving five rounds of the LP relaxation of the
switching problem and the associated separation problems; 
the average total computation time including the
preprocessing time (``Total Time''); the number of Branch-and-Bound
nodes (``B\&B Nodes''); the number of unsolved instances within a time
limit of one hour (``\# Unsolved''); and the average final optimality gap for unsolved instances (``Unsolved Opt Gap
(\%)''). For each metric, we report both the arithmetic mean (the first number) and the geometric mean (the second number).

}

From these tables, we can see that the proposed algorithm MoreCycles consistently outperforms the default algorithm in terms of the percentage of optimality gap closed at the root note, the size of the Branch-and-Bound tree, the total computation time, and the number of instances solved. 

Figures \ref{fig:118-15}-\ref{fig:300-5} show the performance profiles of the three algorithms on the five sets of test instances. In particular, each curve in a performance profile is the cumulative distribution function for the ratio of one algorithm's runtime to the best runtime among the three (\cite{Dolan}). %The profile curve closer to the northwest corner of the plot corresponds to the faster and more robust algorithm. These figures show that the 
Set 118\_15 is a relatively easy test set. Figure \ref{fig:118-15}
shows that for $42.9\%$ of the instances, the Default algorithm is the
fastest algorithm, the BasicCycles algorithm is fastest on $37.1\%$,
and the MoreCycles algorithm is fastest on $20\%$. However, if we
choose being within a factor of two of the fastest algorithm as the
comparison criterion, both BasicCycles and MoreCycles surpass
Default.  BasicCycles solves all the instances and has the dominating performance for this set of instances. 

Figure \ref{fig:118-9G} shows the results for Set 118\_9G. BasicCycles is the fastest
algorithm in $40\%$ of the instances; MoreCycles and Default have the success rate of $20\%$ of being the fastest.  If
we choose being within a factor of four of the fastest algorithm as the interest of comparison, then MoreCycles starts to outperform BasicCycles. Also, MoreCycles solves $74.3\%$ of the instances, which is the highest among the three. 

For instance sets 118\_15\_6 and 118\_15\_16, Figures \ref{fig:118-15-6}-\ref{fig:118-15-16} show that BasicCycles is the fastest algorithm in the highest percentage of instances. For the ratio factor higher than one, BasicCycles and MoreCycles clearly dominate Default, and both solve  significantly more instances than Default. MoreCycles is the most robust algorithm in the sense that it  solves the most instances. 

On the 300\_5 instance set, Figure \ref{fig:300-5} shows that MoreCycles is the fastest in the largest fraction of
instances and it also solves the most instances.   BasicCycles is dominated by the other two methods for this set of instances. 

Figure \ref{fig:all} shows the performance profiles of the three algorithms over all the five test sets. It shows that,
BasicCycles is the fastest algorithm in $38.3\%$ of the instances, whereas Default is the fastest in $29.7\%$  of the instances and MoreCycles is the fastest in $25.7\%$ of the instances. If we look at the algorithm that can solve $75\%$ of all the instances with the highest efficiency, then BasicCycles and MoreCycles have almost identical performance, and both significantly dominate Default. MoreCycles solves slightly more instances than BasicCycles within the time limit. In summary, the performance profiles show that BasicCycles has the highest probability of being the fastest algorithm and MoreCycles solves the most instances.  These experiments demonstrate that the cycle inequalities
\eqref{project without y} can be quite useful in improving the performance of state-of-the-art MIP software for solving the DC-OTS.

\newpage
\vspace{-1cm}
\begin{table}[H] 
\centering
	% Table generated by Excel2LaTeX from sheet 'Set 118_15 summary'
	\begin{subtable}[h]{4.5in}
		\begin{center}
%					\small
		\small
		\begin{tabular}{rccc}
			\hline
			&    Default &      BasicCycles &    MoreCycles \\
			\hline
			\# Cuts &          - & 32.91/31.64 & 218.66/190.10 \\
			%\hline
			Preprocessing Time (s) &          - &  0.05/0.04 &  0.80/0.44 \\
			%\hline
			%\hline
			Gap Closed by Cuts (\%) &          - &     1.50/0 &     2.90/0 \\
			%\hline
			Root Gap Closed (\%) &     4.41/0 &  7.84/7.32 & 18.43/17.33 \\
			%\hline
			Total Time (s) & 437.75/35.39 & 26.12/15.71 & 234.43/30.51 \\
			%\hline
			B\&B Nodes & 6.1E+5/3.6E+4 & 3.6E+4/1.5E+4 & 2.7E+5/2.2E+4 \\
			%\hline
%			{\bred \# Switched } &        /  &     / &     /  \\
			%\hline
			\# Unsolved &          2 &          0 &          1 \\
			%\hline
			Unsolved Opt Gap (\%) &  0.30/0.29 &   0/0 &  0.11/0.11 \\
			\hline
		\end{tabular}  
		\end{center}
		\caption{Summary of results for Set 118\_15.  {\bred Initial LP Gap (\%): 26.87/26.57}}
		\label{tb:118-15}
	\end{subtable}
	
	\vspace{2mm}
	% Table generated by Excel2LaTeX from sheet 'Set 118_9G summary'
	%\small
	\begin{subtable}[h]{4.5in}
		%			\small
		\small
		\begin{tabular}{rccc}
			\hline
			&    Default &     BasicCycles &       MoreCycles \\
			\hline
			\# Cuts &          - & 28.37/27.63 & 150.31/139.02 \\
			%\hline
			Preprocessing Time (s) &          - &  0.05/0.04 &  1.14/0.32 \\
			%\hline
			Gap Closed by Cuts (\%) &          - &     5.43/0 &    10.83/0 \\
			%\hline
			Root Gap Closed (\%) &    13.44/0 &    22.26/0 &    27.24/0 \\
			%\hline
			Total Time (s) & 1126.54/148.65 & 1170.22/129.52 & 951.47/121.90 \\
			%\hline
			B\&B Nodes & 1.9E+6/2.1E+5 & 1.9E+6/1.8E+5 & 1.3E+6/1.5E+5 \\
			%\hline
%			{\bred \# Switched } &        /  &     / &     /  \\
			%\hline
			\# Unsolved &         10 &         10 &          7 \\
			%\hline
			Unsolved Opt Gap (\%) &  0.71/0.45 &  0.79/0.36 &  0.50/0.33 \\
			\hline	
		\end{tabular} 
		\caption{Summary of results for Set 118\_9G.  {\bred Initial LP Gap (\%): 19.12/13.37}}
		\label{tb:118-9G}
	\end{subtable}
	
	\vspace{2mm}
	
	\begin{subtable}[h]{4.5in} 
		\begin{center}
			% Table generated by Excel2LaTeX from sheet 'Set 118_9G summary'
			%				\small
			\small
			\begin{tabular}{rccc}
				\hline
				&    Default &      BasicCycles &       MoreCycles \\
				\hline
				\# Cuts &          - & 29.34/28.91 & 145.20/141.54 \\
				%\hline
				Preprocessing Time (s) &          - &  0.11/0.03 &  1.30/0.44 \\
			%\hline
				Gap Closed by Cuts (\%) &          - &     1.50/0 &     2.92/0 \\
				%\hline
				Root Gap Closed (\%) &     5.42/0 &  7.80/7.32 & 18.19/17.00 \\
				%\hline
				Total Time (s) & 901.31/124.72 & 506.40/55.70 & 515.05/72.37 \\
				%\hline
				B\&B Nodes & 1.3E+6/1.4E+5 & 4.5E+5/4.9E+4 & 6.1E+5/6.6E+4 \\
			%\hline
%			{\bred \# Switched } &        /  &     / &     /  \\
				%\hline
				\# Unsolved &          5 &          3 &          1 \\
				%\hline
				Unsolved Opt Gap (\%) &  1.31/0.76 &  1.87/1.68 &  0.13/0.13 \\
				\hline
			\end{tabular}  
		\end{center}
		\caption{Summary of results for Set 118\_15\_6.  {\bred Initial LP Gap (\%): 26.76/26.48}}
		\label{tb:118-15-6}
	\end{subtable}
\end{table}

\begin{table}
\centering
	\begin{subtable}[h]{4.5in} 
		\begin{center}
			% Table generated by Excel2LaTeX from sheet 'Set 118_15 16 0.66 summary'
			%				\small
			\small
			\begin{tabular}{rccc}
				\hline
				&    Default &     BasicCycles &       MoreCycles \\
				\hline
				\# Cuts &          - & 26.54/25.85 & 86.23/84.23 \\
				%\hline
				Preprocessing Time (s) &          - &  0.11/0.05 &  0.54/0.31 \\
			%\hline
				Gap Closed by Cuts (\%) &          - &     0.05/0 &     0.31/0 \\
				%\hline
				Root Gap Closed (\%) &     0.47/0 &     4.38/0 &    11.65/0 \\
				%\hline
				Total Time (s) & 2243.71/1750.82 & 1473.52/924.64 & 1581.60/1170.37 \\
				%\hline
				B\&B Nodes & 2.0E+6/1.5E+6 & 1.2E+6/8.1E+5 & 1.2E+6/9.0E+5 \\
			%\hline
%			{\bred \# Switched } &        /  &     / &     /  \\
				%\hline
				\# Unsolved &         13 &          4 &          3 \\
				%\hline
				Unsolved Opt Gap (\%) &  0.54/0.41 &  0.93/0.74 &  1.22/0.73 \\
				\hline
			\end{tabular}  
		\end{center}
		\caption{Summary of results for Set 118\_15\_16.  {\bred Initial LP Gap (\%): 3.25/2.81}}
		\label{tb:118-15-16}
	\end{subtable}
	
	\vspace{2mm}
	
	\begin{subtable}[h]{4.5in}
		\begin{center}
			% Table generated by Excel2LaTeX from sheet 'Set 118_9G summary'
			%				\small
			\small
			\begin{tabular}{rccc}
				\hline
				&    Default &      BasicCycles &       MoreCycles \\
				\hline
				\# Cuts &          - & 15.66/15.26 & 34.83/33.56 \\
				%\hline
				Preprocessing Time (s) &          - &  0.09/0.06 &  0.48/0.43 \\
			%\hline
				Gap Closed by Cuts (\%) &          - &  7.26/7.25 &  7.26/7.25 \\
				%\hline
				Root Gap Closed (\%) &  7.11/4.17 & 48.37/48.28 & 48.39/48.30 \\
				%\hline
				Total Time (s) & 1685.39/634.75 & 1940.16/841.88 & 1524.14/514.76 \\
				%\hline
				B\&B Nodes & 6.6E+5/2.3E+5 & 7.8E+5/3.1E+5 & 6.2E+5/1.9E+5 \\
			%\hline
%			{\bred \# Switched } &        /  &     / &     /  \\
				%\hline
				\# Unsolved &         13 &         16 &        10 \\
				%\hline
				Unsolved Opt Gap (\%) &  0.21/0.19 &  0.22/0.21 &  0.40/0.23 \\
				\hline
			\end{tabular}  
		\end{center}
		\caption{Summary of results for Set 300\_5.  {\bred Initial LP Gap (\%): 4.88/4.88}}
		\label{tb:300-5}
	\end{subtable}
	
	\vspace{2mm}
	
	\begin{subtable}[h]{4.5in} 
		\begin{center}
			% Table generated by Excel2LaTeX from sheet 'Set 118_9G summary'
			\small
			\begin{tabular}{rccc}
				\hline
				&    Default &      BasicCycles &       MoreCycles \\
				\hline
				\# Cuts &          - &26.57/25.10 &127.05/101.12 \\
				%\hline
				Preprocessing Time (s) &          - &  0.08/0.05&  0.85/0.38 \\
			%\hline
				Gap Closed by Cuts (\%) &          - &  3.15/0 & 4.84/0 \\
				%\hline
				Root Gap Closed (\%) & 6.17/0 &18.13/0 & 24.78/0 \\
				%\hline
				Total Time (s) & 1278.94/235.82 & 1023.29/154.57 & 961.34/174.58 \\
				%\hline
				B\&B Nodes & 1.3E+6/2.1E+5 & 8.8E+5/1.3E+5 & 7.9E+5/1.3E+5 \\
			%\hline
%			{\bred \# Switched } &        /  &     / &     /  \\
				%\hline
				\# Unsolved &         43 &         33 &         22 \\
				%\hline
				Unsolved Opt Gap (\%) &  0.56/0.35 &  0.63/0.34 &  0.52/0.28 \\
				\hline
			\end{tabular}  
		\end{center}
		\caption{Summary of all the instances. {\bred Initial LP Gap (\%): 16.17/10.53}}
		\label{tb:all}
	\end{subtable}
	\caption{\normalsize Summary of computational results. For each metric, we report the arithmetic  and  geometric mean, respectively.}  
\end{table}

\begin{figure}
	%	\begin{tabular}{cc}
	\begin{center}
	\begin{subfigure}[h]{3in}
		\centering
		\includegraphics[width=3in]{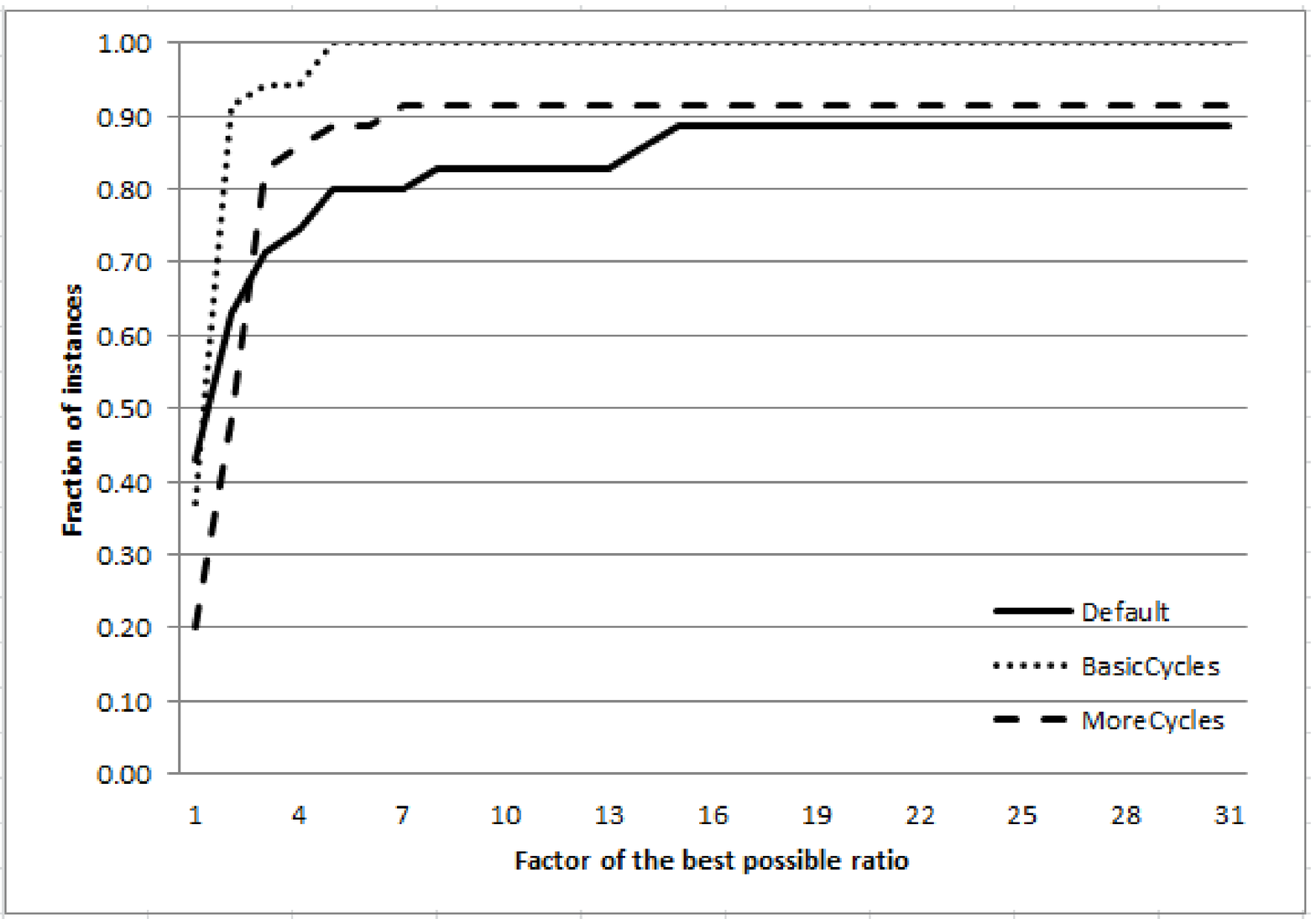}
		\caption{Performance profile for Set 118\_15.} 
		\label{fig:118-15}
	\end{subfigure}
	\;
	\begin{subfigure}[h]{3in}
		\centering
		\includegraphics[width=3in]{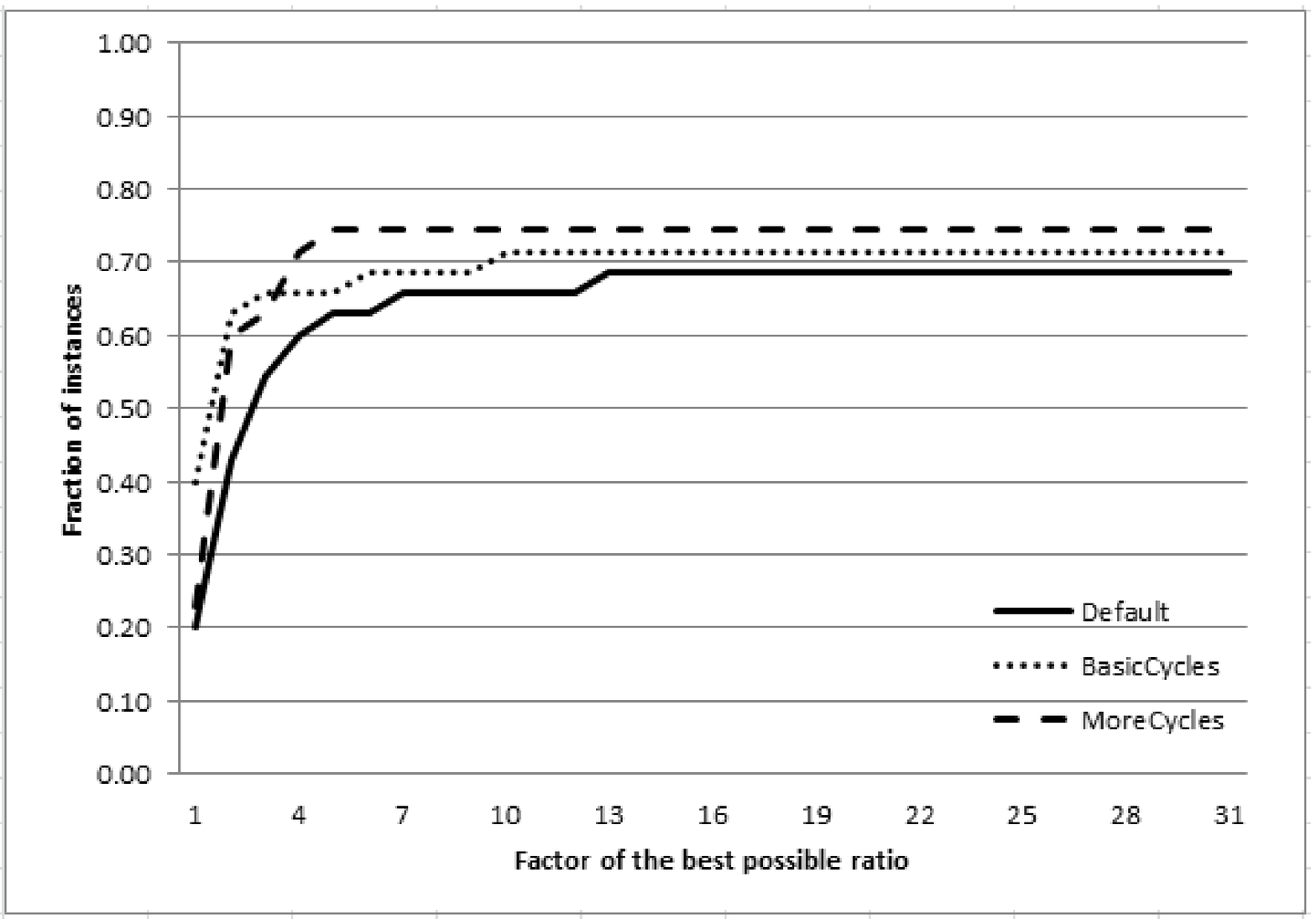}
		\caption{Performance profile for Set 118\_9G.} 
		\label{fig:118-9G}
	\end{subfigure}
	\vspace{3mm}
	
	\begin{subfigure}[b]{3in}
		%	\centering
		\includegraphics[width=3in]{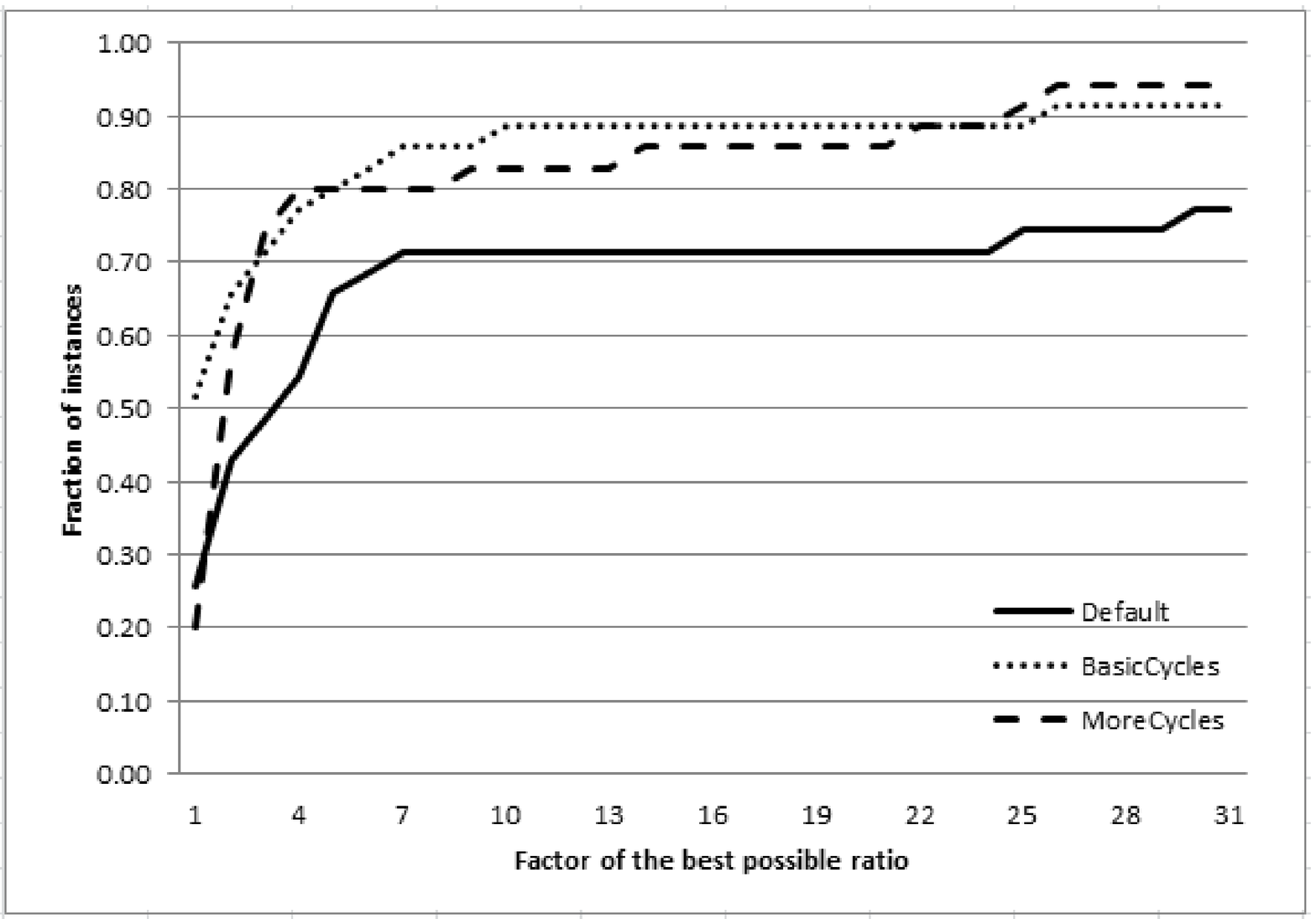}
		\caption{Performance profile for Set 118\_15\_6.} 
		\label{fig:118-15-6}
	\end{subfigure}
	\;
	\begin{subfigure}[b]{3in}
		%	\centering
		\includegraphics[width=3in]{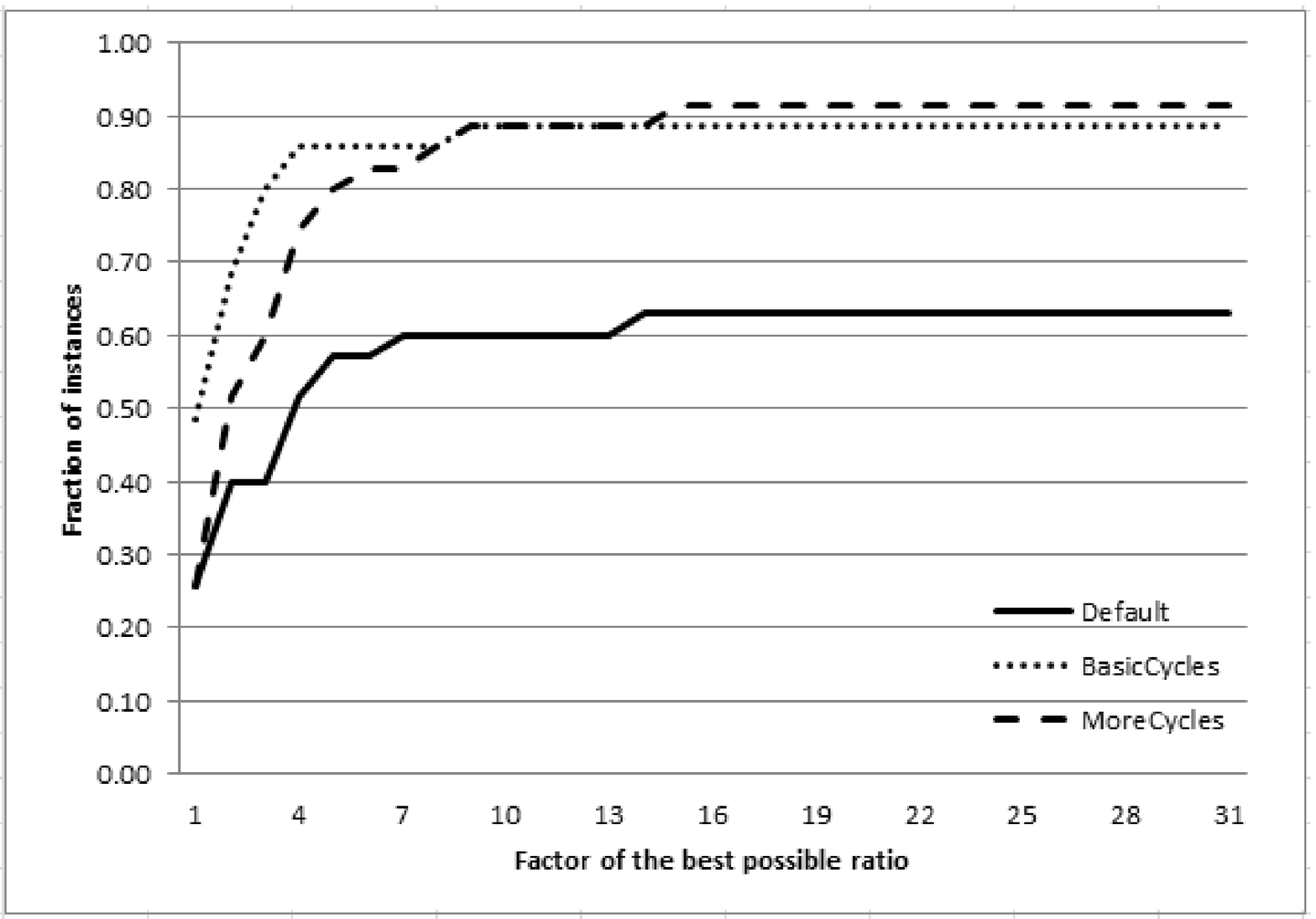}
		\caption{Performance profile for Set 118\_15\_16.} 
		\label{fig:118-15-16}
	\end{subfigure}
	\vspace{3mm}
	
	\begin{subfigure}[b]{3in}
		%	\centering
		\includegraphics[width=3in]{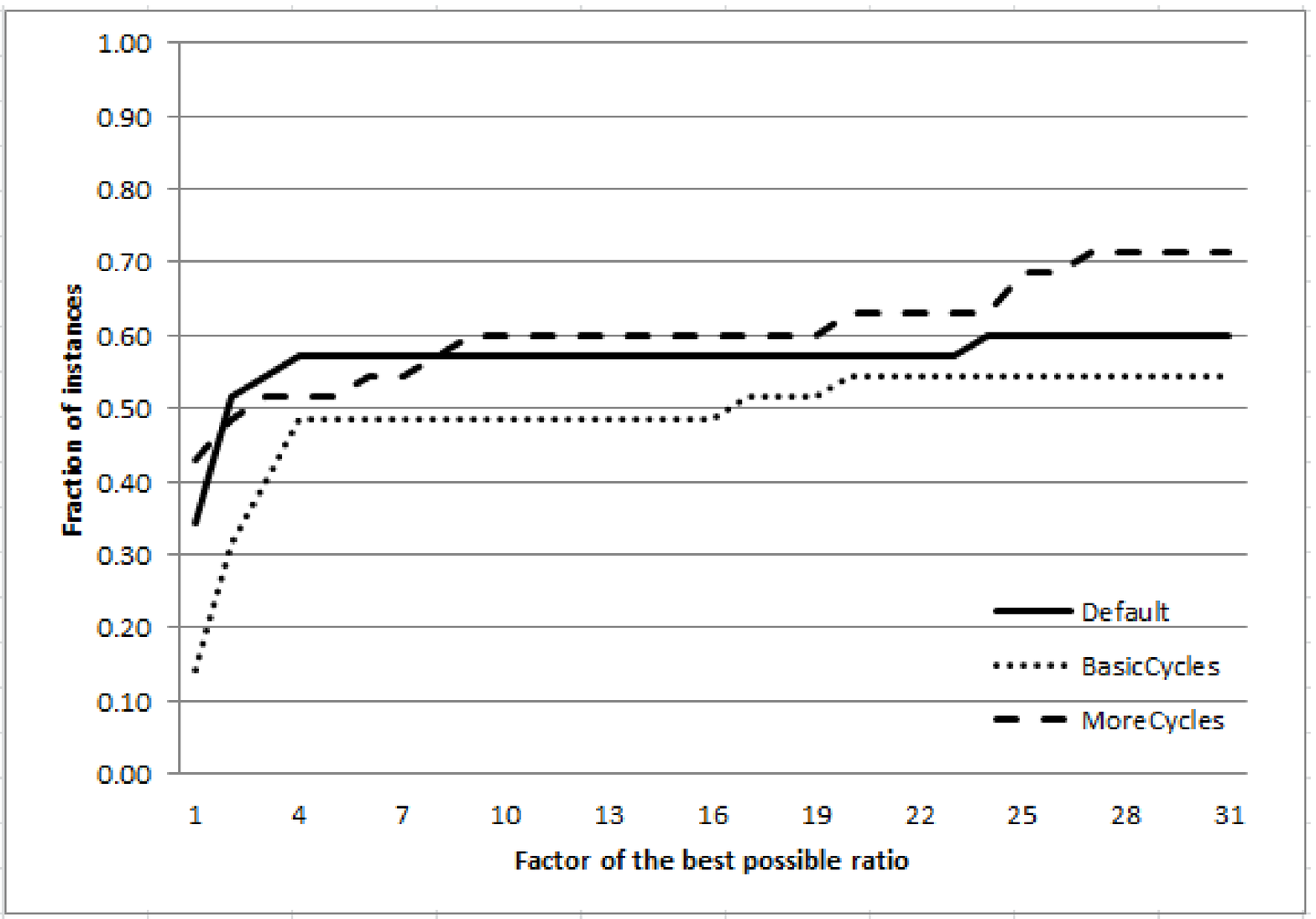}
		\caption{Performance profile for Set 300\_5.} 
		\label{fig:300-5}
	\end{subfigure}
	\;
	\begin{subfigure}[b]{3in}
		%	\centering
		\includegraphics[width=3in]{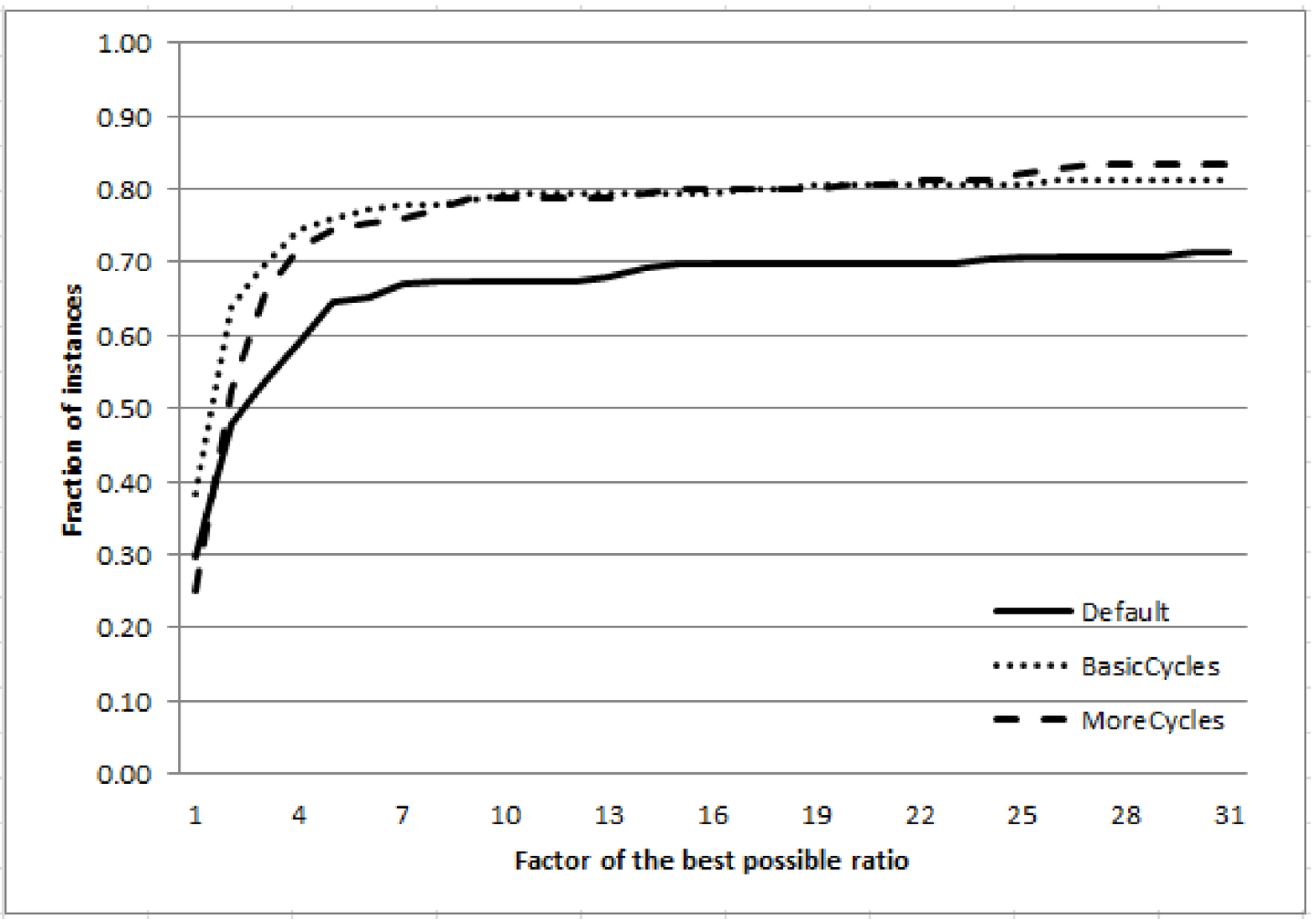}
		\caption{Performance profile for all the instances.} 
		\label{fig:all}
	\end{subfigure}
	%\end{tabular}
	\end{center}
	\caption{\normalsize Performance profiles for test instances.}
\end{figure}

% % % % % old format % % % %
\exclude{
\begin{table}[H] 
\begin{center}
% Table generated by Excel2LaTeX from sheet 'Set 118_15 summary'
\begin{tabular}{rccc}
\hline
           &    Default &      BasicCycles &    MoreCycles \\
\hline
   \# Cuts &          - & 32.91/31.64 & 218.66/190.10 \\
%\hline
Preprocessing Time (s) &          - &  0.05/0.04 &  0.80/0.44 \\
%\hline
Gap Closed by Cuts (\%) &          - &     1.50/0 &     2.90/0 \\
%\hline
Root Gap Closed (\%) &     4.41/0 &  7.84/7.32 & 18.43/17.33 \\
%\hline
Total Time (s) & 437.75/35.39 & 26.12/15.71 & 234.43/30.51 \\
%\hline
B\&B Nodes & 6.1E+5/3.6E+4 & 3.6E+4/1.5E+4 & 2.7E+5/2.2E+4 \\
%\hline
\# Unsolved &          2 &          0 &          1 \\
%\hline
Unsolved Opt Gap (\%) &  0.30/0.29 &   0/0 &  0.11/0.11 \\
\hline
\end{tabular}  
\end{center}
\caption{Summary of results for Set 118\_15.}
\label{tb:118-15}
\end{table}

\begin{figure}[H]
\begin{center}
            \includegraphics[width=0.6\columnwidth]{118_15.eps}
\caption{Performance profile for Set 118\_15.} 
\label{fig:118-15}
\end{center}
\end{figure}

\begin{table}[H] 
\begin{center}
% Table generated by Excel2LaTeX from sheet 'Set 118_9G summary'
%\small
\begin{tabular}{rccc}
\hline
           &    Default &     BasicCycles &       MoreCycles \\
\hline
   \# Cuts &          - & 28.37/27.63 & 150.31/139.02 \\
%\hline
Preprocessing Time (s) &          - &  0.05/0.04 &  1.14/0.32 \\
%\hline
Gap Closed by Cuts (\%) &          - &     5.43/0 &    10.83/0 \\
%\hline
Root Gap Closed (\%) &    13.44/0 &    22.26/0 &    27.24/0 \\
%\hline
Total Time (s) & 1126.54/148.65 & 1170.22/129.52 & 951.47/121.90 \\
%\hline
B\&B Nodes & 1.9E+6/2.1E+5 & 1.9E+6/1.8E+5 & 1.3E+6/1.5E+5 \\
%\hline
\# Unsolved &         10 &         10 &          7 \\
%\hline
Unsolved Opt Gap (\%) &  0.71/0.45 &  0.79/0.36 &  0.50/0.33 \\
\hline
\end{tabular}  
\end{center}
\caption{Summary of results for Set 118\_9G.}
\label{tb:118-9G}
\end{table}

\begin{figure}[H]
\begin{center}
            \includegraphics[width=0.6\columnwidth]{118_9G.eps}
\caption{Performance profile for Set 118\_9G.} 
\label{fig:118-9G}
\end{center}
\end{figure}

\begin{table}[H] 
\begin{center}
% Table generated by Excel2LaTeX from sheet 'Set 118_9G summary'
%\small
\begin{tabular}{rccc}
\hline
           &    Default &      BasicCycles &       MoreCycles \\
\hline
   \# Cuts &          - & 29.34/28.91 & 145.20/141.54 \\
%\hline
Preprocessing Time (s) &          - &  0.11/0.03 &  1.30/0.44 \\
%\hline
Gap Closed by Cuts (\%) &          - &     1.50/0 &     2.92/0 \\
%\hline
Root Gap Closed (\%) &     5.42/0 &  7.80/7.32 & 18.19/17.00 \\
%\hline
Total Time (s) & 901.31/124.72 & 506.40/55.70 & 515.05/72.37 \\
%\hline
B\&B Nodes & 1.3E+6/1.4E+5 & 4.5E+5/4.9E+4 & 6.1E+5/6.6E+4 \\
%\hline
\# Unsolved &          5 &          3 &          1 \\
%\hline
Unsolved Opt Gap (\%) &  1.31/0.76 &  1.87/1.68 &  0.13/0.13 \\
\hline
\end{tabular}  
\end{center}
\caption{Summary of results for Set 118\_15\_6.}
\label{tb:118-15-6}
\end{table}

\begin{figure}[H]
\begin{center}
            \includegraphics[width=0.6\columnwidth]{118_15_6.eps}
\caption{Performance profile for Set 118\_15\_6.} 
\label{fig:118-15-6}
\end{center}
\end{figure}
\begin{table}[H] 
\begin{center}
% Table generated by Excel2LaTeX from sheet 'Set 118_15 16 0.66 summary'
%\small
\begin{tabular}{rccc}
\hline
           &    Default &     BasicCycles &       MoreCycles \\
\hline
   \# Cuts &          - & 26.54/25.85 & 86.23/84.23 \\
%\hline
Preprocessing Time (s) &          - &  0.11/0.05 &  0.54/0.31 \\
%\hline
Gap Closed by Cuts (\%) &          - &     0.05/0 &     0.31/0 \\
%\hline
Root Gap Closed (\%) &     0.47/0 &     4.38/0 &    11.65/0 \\
%\hline
Total Time (s) & 2243.71/1750.82 & 1473.52/924.64 & 1581.60/1170.37 \\
%\hline
B\&B Nodes & 2.0E+6/1.5E+6 & 1.2E+6/8.1E+5 & 1.2E+6/9.0E+5 \\
%\hline
\# Unsolved &         13 &          4 &          3 \\
%\hline
Unsolved Opt Gap (\%) &  0.54/0.41 &  0.93/0.74 &  1.22/0.73 \\
\hline
\end{tabular}  
\end{center}
\caption{Summary of results for Set 118\_15\_16.}
\label{tb:118-15-16}
\end{table}

\begin{figure}[H]
\begin{center}
            \includegraphics[width=0.6\columnwidth]{118_15_16.eps}
\caption{Performance profile for Set 118\_15\_16.} 
\label{fig:118-15-16}
\end{center}
\end{figure}

\begin{table}[H] 
\begin{center}
% Table generated by Excel2LaTeX from sheet 'Set 118_9G summary'
%\small
\begin{tabular}{rccc}
\hline
           &    Default &      BasicCycles &       MoreCycles \\
\hline
   \# Cuts &          - & 15.66/15.26 & 34.83/33.56 \\
%\hline
Preprocessing Time (s) &          - &  0.09/0.06 &  0.48/0.43 \\
%\hline
Gap Closed by Cuts (\%) &          - &  7.26/7.25 &  7.26/7.25 \\
%\hline
Root Gap Closed (\%) &  7.11/4.17 & 48.37/48.28 & 48.39/48.30 \\
%\hline
Total Time (s) & 1685.39/634.75 & 1940.16/841.88 & 1524.14/514.76 \\
%\hline
B\&B Nodes & 6.6E+5/2.3E+5 & 7.8E+5/3.1E+5 & 6.2E+5/1.9E+5 \\
%\hline
\# Unsolved &         13 &         16 &        10 \\
%\hline
Unsolved Opt Gap (\%) &  0.21/0.19 &  0.22/0.21 &  0.40/0.23 \\
\hline
\end{tabular}  
\end{center}
\caption{Summary of results for Set 300\_5.}
\label{tb:300-5}
\end{table}

\begin{figure}[H]
\begin{center}
            \includegraphics[width=0.6\columnwidth]{300_5_3721.eps}
\caption{Performance profile for Set 300\_5.} 
\label{fig:300-5}
\end{center}
\end{figure}

\begin{table}[H] 
\begin{center}
% Table generated by Excel2LaTeX from sheet 'Set 118_9G summary'
%\small
\begin{tabular}{rccc}
\hline
           &    Default &      BasicCycles &       MoreCycles \\
\hline
   \# Cuts &          - &26.57/25.10 &127.05/101.12 \\
%\hline
Preprocessing Time (s) &          - &  0.08/0.05&  0.85/0.38 \\
%\hline
Gap Closed by Cuts (\%) &          - &  3.15/0 & 4.84/0 \\
%\hline
Root Gap Closed (\%) & 6.17/0 &18.13/0 & 24.78/0 \\
%\hline
Total Time (s) & 1278.94/235.82 & 1023.29/154.57 & 961.34/174.58 \\
%\hline
B\&B Nodes & 1.3E+6/2.1E+5 & 8.8E+5/1.3E+5 & 7.9E+5/1.3E+5 \\
%\hline
\# Unsolved &         43 &         33 &         22 \\
%\hline
Unsolved Opt Gap (\%) &  0.56/0.35 &  0.63/0.34 &  0.52/0.28 \\
\hline
\end{tabular}  
\end{center}
\caption{Summary of all the instances.}
\label{tb:all}
\end{table}

\begin{figure}[H]
\begin{center}
\includegraphics[width=0.6\columnwidth]{overall_3721.eps}
\caption{Performance profile for all the instances.} 
\label{fig:all}
\end{center}
\end{figure}

}

\newpage
{\bred
  \subsection{Sensitivity Analysis}
  Previous literature on the DC-OTS demonstrates that significant cost
  savings can be achieved by switching off only a few lines
  \citep{Fisher,Wu13}.  However, the optimal solutions obtained by the
  integer program turn off a significantly larger number of lines than
  suggested by previous studies.  Specifically, the average number of
  lines turned off in the optimal solutions to the 118-bus instances
  is 42, and the maximum number turned off is 57.  For the 300-bus
  instances, an average of 85 lines are turned off in the optimal
  solutions, with a maximum of 107.  This surprising result is a
  consequence of our observation that there are {\it many} optimal or
  near-optimal topologies for the DC-OTS.  To demonstrate the impact
  of switching off fewer lines than suggested by the optimal solution
  to the integer program, we performed a sensitivity analysis of the
  cost versus the number of lines $N$ that are allowed to be switched
  off.
  \exclude{
  
  Our computations indicate that there are many topologies that 
An important practical issue in OTS is to determine the number of lines to be switched off in order to obtain most of the cost saving. Previous literature suggests this can be achieved by switching off only a few lines. 
It turns out that there are several different network topologies, which give practically the same objective function value and without any cardinality constraints, Angle Formulation tends to pick a topology with several lines switched off. 
%In particular, we observe that up to 57 and 107 lines may be switched off in the 118-bus and 300-bus instances with arithmetic averages around 42 and 85, respectively. 
Since these numbers are relatively large compared to the size of the
networks, one may be interested in a cost-effective topology with
fewer lines switched off. }

In this analysis, we chose one instance from each of the five sets whose
optimal solution had a large number of lines switched off (41, 38, 41,
48, 91, respectively - refered to as instances (a), (b), (c), (d), (e)
henceforth) and solved a number of DC-OTS instances with the
cardinality constraint: 
\begin{equation}
\sum_{(i,j) \in \mathcal{L}} x_{ij} \ge |\mathcal{L}| - N
\end{equation}
added to the formulation.
Here, $N$ is the \textit{switching budget}, that is, the number of lines allowed to be switched off (note that $N=0$ corresponds to DC-OPF). We experimented with different $N$ values and the results are given in Figures \ref{fig:card118-15}-\ref{fig:card300-5}. We make the following observations:
\begin{itemize}
\item
DC-OPF versions of instances (a), (c), (d) and (e) are infeasible.
\item
Once a particular instance becomes feasible, increasing the switching budget has a significant effect on the objective value for the first few lines (especially, for instances (b), (d) and (e)).
\item
Nevertheless, the full cost benefit can only be realized by switching off several lines.
\item
Switching off 11 lines is enough for 118-bus instances (a), (b), (c), (d) to achieve the maximum cost benefit (just
seven lines are needed for instance (b)). 
\item For the 300-bus instance, switching off 15 lines yields nearly the maximum cost benefit. 
\item The LP relaxation value is not affected by the switching budget.
\end{itemize}
Our results support the observation that most, although not all, of the
cost benefits in transmission switching can be realized by switching
off only a few lines.  This has a positive impact on the robustness of
the network.  In our experience, the MIP instances with cardinality
constraints were more time consuming to solve that without the
cardinality constraint.  For example the two instances which had the largest number of switched off lines in the optimal
solutions we found (the 118-bus instance with 57 lines switched off and the
300-bus instance with 107 lines switched off) could not be solved in one hour for most switching budgets. Our 
observation that the LP relaxation value is not affected by the switching budget may help explain this. Since the
optimal IP value is larger for smaller switching budgets, the LP relaxation gap is larger for these instances.
\begin{figure}
	%	\begin{tabular}{cc}
	\begin{center}
	\begin{subfigure}[h]{3in}
		\centering
		\includegraphics[width=3in]{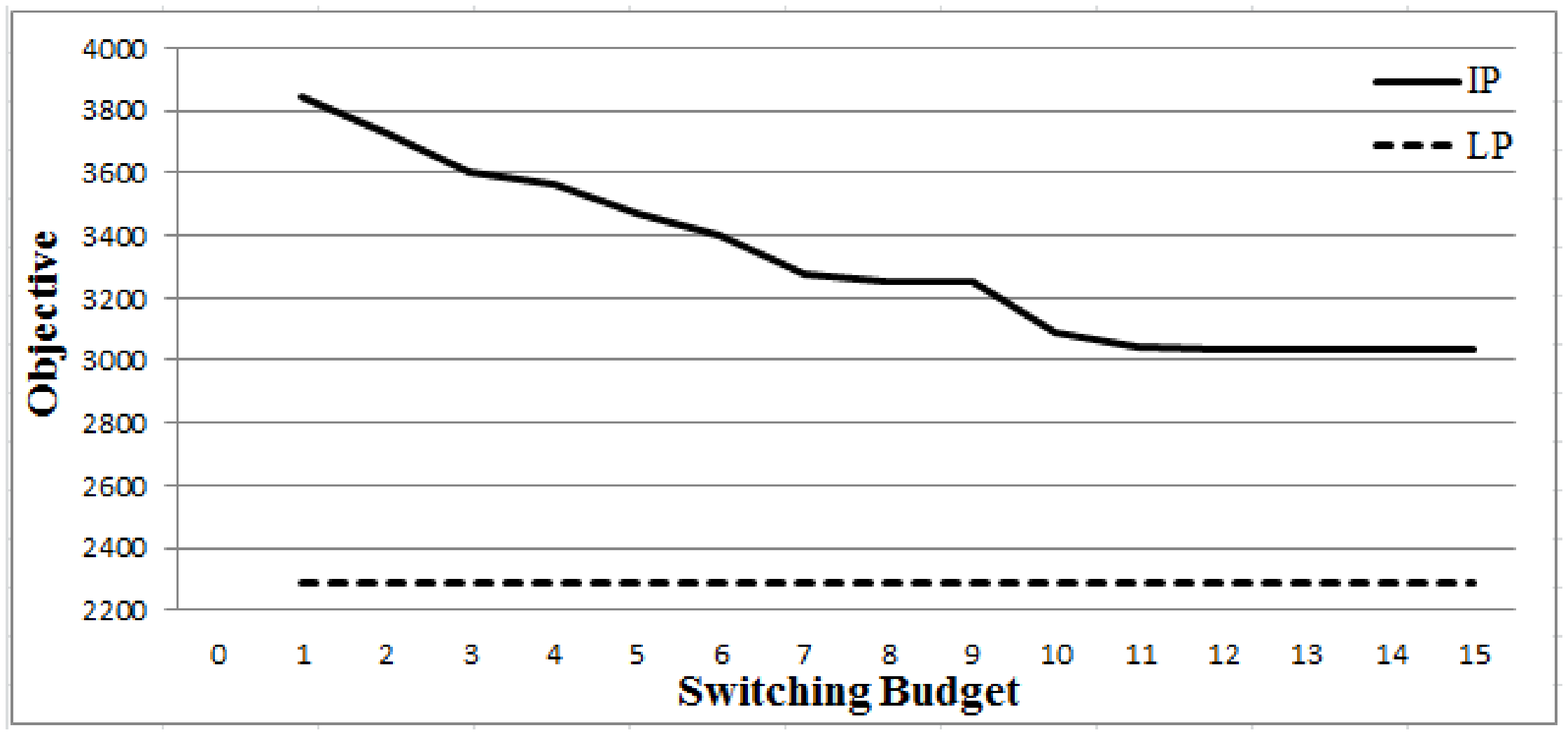}
		\caption{An instance from Set 118\_15.} %index=1, \#off=41
		\label{fig:card118-15}
	\end{subfigure}
	\;
	\begin{subfigure}[h]{3in}
		\centering
		\includegraphics[width=3in]{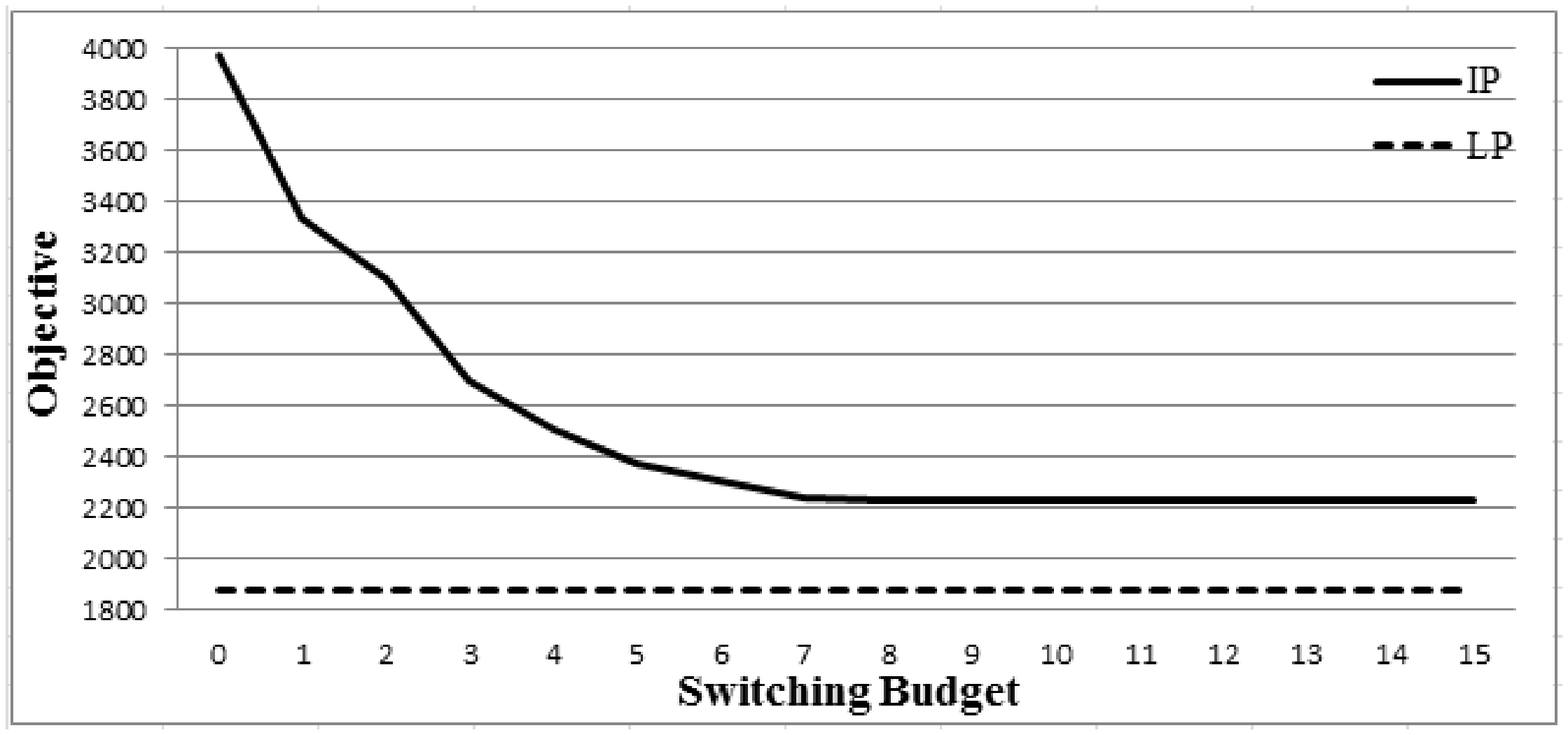}
		\caption{An instance from Set 118\_9G.} %index=5, \#off=38
		\label{fig:card118-9G}
	\end{subfigure}
	\vspace{3mm}
	
	\begin{subfigure}[b]{3in}
		%	\centering
		\includegraphics[width=3in]{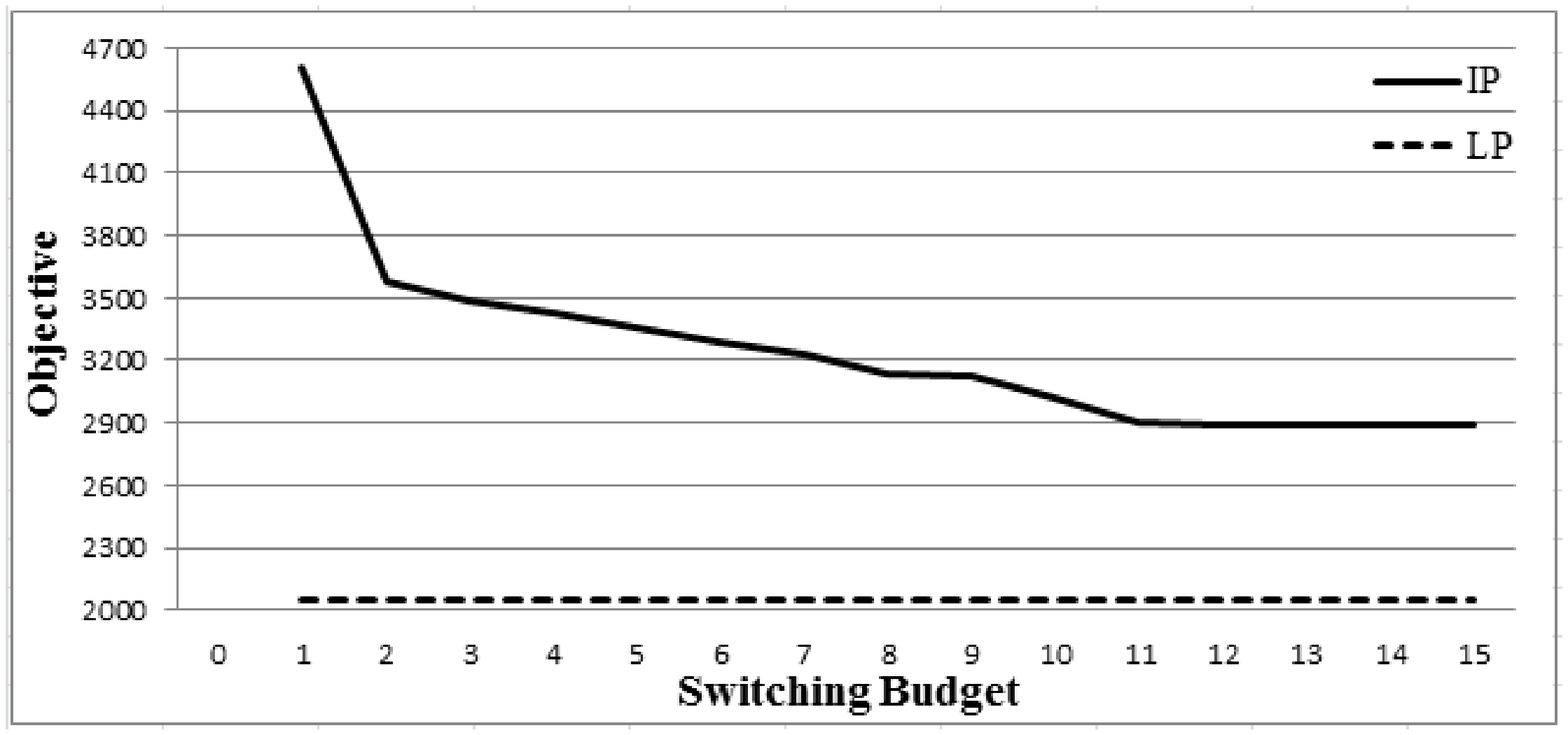}
		\caption{An instance from Set 118\_15\_6.}  %index=22, \#off=41
		\label{fig:card118-15-6}
	\end{subfigure}
	\;
	\begin{subfigure}[b]{3in}
		%	\centering
		\includegraphics[width=3in]{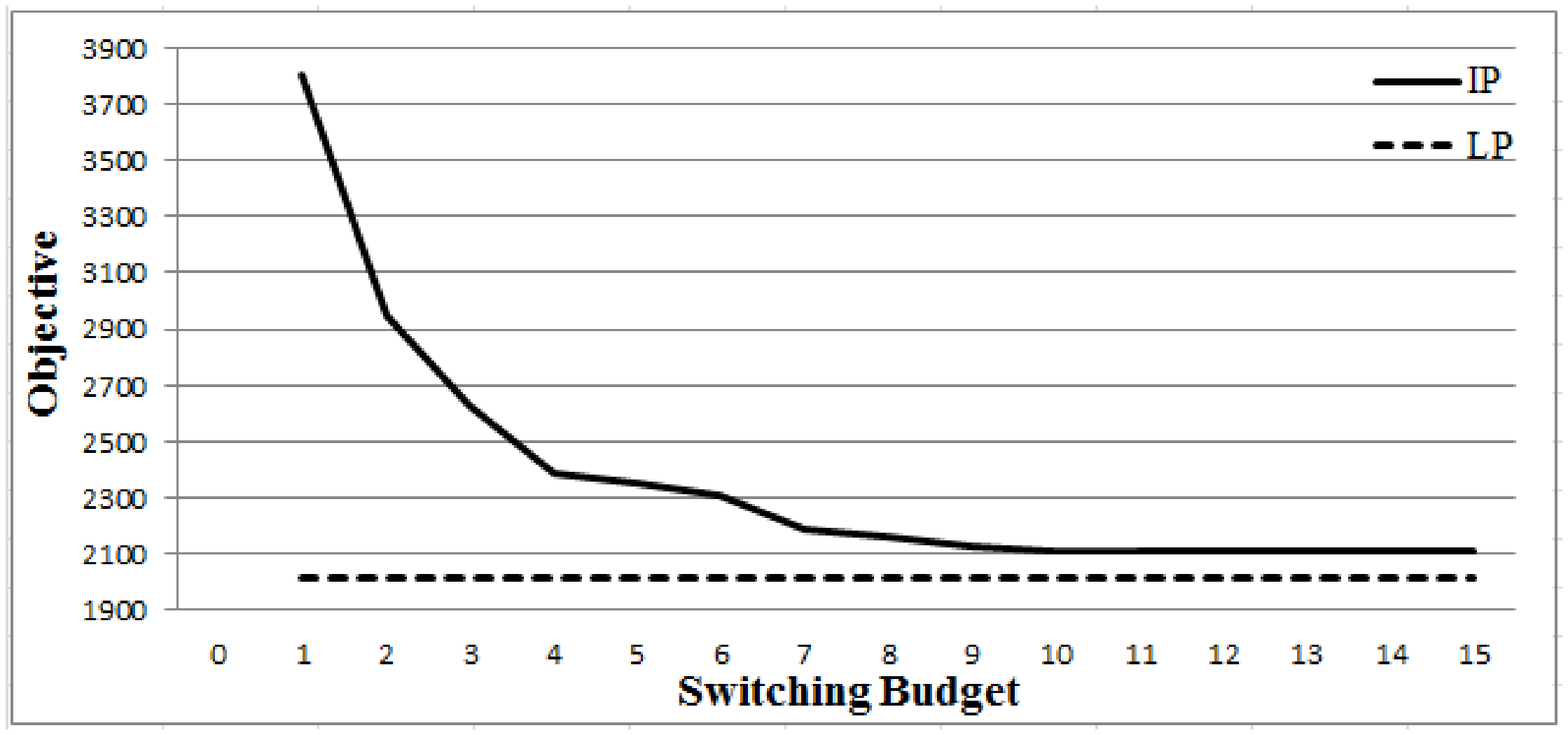}
		\caption{An instance from Set 118\_15\_16.}  %index=17, \#off=48
		\label{fig:card118-15-16}
	\end{subfigure}
	\vspace{3mm}
	
	\begin{subfigure}[b]{4.5in}
		%	\centering
		\includegraphics[width=4.5in]{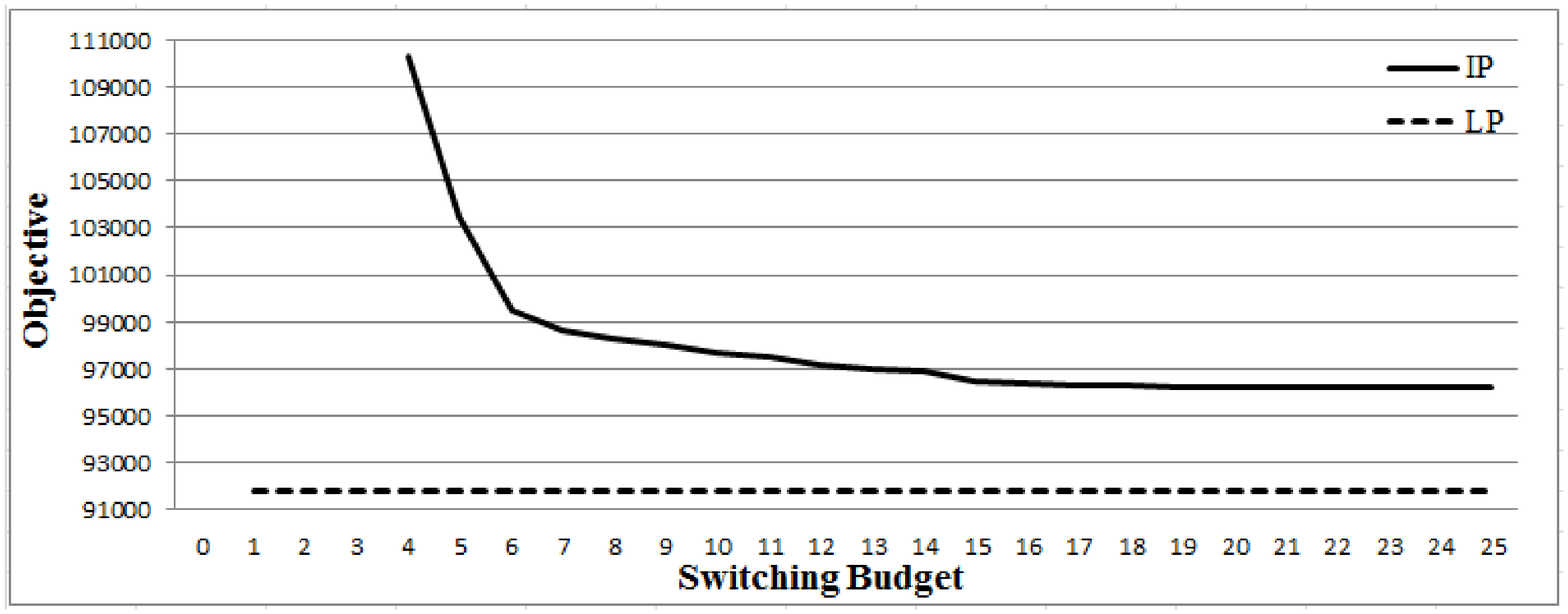}
		\caption{An instance from Set 300\_5.} %index=2, \#off=91
		\label{fig:card300-5}
	\end{subfigure}
	\;
	\end{center}
	\caption{\normalsize {\bred Evolution of objective function (IP) and linear programming relaxation (LP) with respect to different switching budgets for five instances.}}
\end{figure}

}

\section{Conclusions}\label{section:conc}
In this paper, we propose new cycle-based formulations for the optimal power flow problem and the optimal transmission switching problems that use the DC approximation to the power flow equations.  We characterize the convex hull of a cycle-induced substructure in the new formulation, which provides strong valid inequalities that we may add to improve the new formulation.  We demonstrate that separating the new inequalities may be done in linear time for a fixed cycle. 
We conduct extensive experiments to show that the valid inequalities are very useful in reducing the size of the search tree and the computation time for the DC optimal transmission switching problem.

The inequalities we derive may be gainfully employed for {\em any}
power systems problem that involves the addition or removal of
transmission lines and for which the DC approximation to power flow is
sufficient for engineering purposes. We will pursue the application of
these inequalities to other important power systems planning and
operations problems as a future line of research.  Other future lines
of research include the investigation of more complicated substructure
of the new formulation, and engineering the cutting plane procedure to
effectively solve larger-scale networks.  {\bred As an example of
  studying more complicated substructures, one reviewer observed that
  the substructure we study does not involve flow balance constraints.
  Stronger relaxations could be obtained by separating cutting-planes
  using an extended formulation similar to \eqref{disjunctive} that
  includes flow balance constraints. 
}
% % % % % % % % % % % % % % % % % % % % % % % % % % %
% % table of figures

%%%%%%%%%%%%%%%%%%%%%%%%%%%%%%%%%%%%%%%%%%%%%
\exclude{

300_5 with 773 cycles

\begin{table}[H] 
\begin{center}
% Table generated by Excel2LaTeX from sheet 'Set 118_9G summary'
\small
\begin{tabular}{rccc}
\hline
           &    Default &      BasicCycles &       MoreCycles \\
\hline
   \# Cuts &          - & 15.66/15.26 & 33.71/32.53 \\
%\hline
Preprocessing Time (s) &          - &  0.09/0.06 &  0.38/0.13 \\
%\hline
Gap Closed by Cuts (\%) &          - &  7.26/7.25 &  7.22/7.21 \\
%\hline
Root Gap Closed (\%) &  7.11/4.17 & 48.37/48.28 & 48.39/48.30 \\
%\hline
Total Time (s) & 1685.39/634.75 & 1940.16/841.88 & 1653.74/642.40 \\
%\hline
B\&B Nodes & 6.6E+5/2.3E+5 & 7.8E+5/3.1E+5 & 6.3E+5/2.3E+5 \\
%\hline
\# Unsolved &         13 &         16 &        11 \\
%\hline
Unsolved Opt Gap (\%) &  0.21/0.19 &  0.22/0.21 &  0.47/0.28 \\
\hline
\end{tabular}  

\end{center}
\caption{Summary of results for Set 300\_5.}
\label{tb:300-5}
\end{table}

\begin{figure}[H]
\begin{center}
            \includegraphics[width=0.5\columnwidth]{300_5_773.eps}
\caption{Performance profile for Set 300\_5.} 
\label{fig:300-5}
\end{center}
\end{figure}

\begin{table}[H] 
\begin{center}
% Table generated by Excel2LaTeX from sheet 'Set 118_9G summary'
\small
\begin{tabular}{rccc}
\hline
           &    Default &      BasicCycles &       MoreCycles \\
\hline
   \# Cuts &          - &26.57/25.10 &126.55/99.86 \\
%\hline
Preprocessing Time (s) &          - &  0.08/0.05&  0.83/0.30 \\
%\hline
Gap Closed by Cuts (\%) &          - &  3.15/0 & 4.83/0 \\
%\hline
Root Gap Closed (\%) & 6.17/0 &18.13/0 & 24.78/0 \\
%\hline
Total Time (s) & 1278.94/235.82 & 1023.29/154.57 & 987.26/182.49 \\
%\hline
B\&B Nodes & 1.3E+6/2.1E+5 & 8.8E+5/1.3E+5 & 8.0E+5/1.3E+5 \\
%\hline
\# Unsolved &         43 &         33 &         23 \\
%\hline
Unsolved Opt Gap (\%) &  0.56/0.35 &  0.63/0.34 &  0.54/0.31 \\
\hline
\end{tabular}  

\end{center}
\caption{Summary of all the instances.}
\label{tb:all}
\end{table}

\begin{figure}[H]
\begin{center}
            \includegraphics[width=0.5\columnwidth]{overall_773.eps}
\caption{Performance profile for all the instances.} 
\label{fig:all}
\end{center}
\end{figure}

}
%%%%%%%%%%%%%%%%%%%%%%%%%%%%%%%%%%%%%%%%%%%%%

%%%%%%%%%%%%%%%%%%%%%%%%%%%%%%%%%%%%%%%%%%%%%
\exclude{

300_5 with 1980 cycles

\begin{table}[H] 
\begin{center}
% Table generated by Excel2LaTeX from sheet 'Set 118_9G summary'
\small
\begin{tabular}{rccc}
\hline
           &    Default &      BasicCycles &       MoreCycles \\
\hline
   \# Cuts &          - & 15.66/15.26 & 33.71/32.53 \\
%\hline
Preprocessing Time (s) &          - &  0.09/0.06 &  0.30/0.27 \\
%\hline
Gap Closed by Cuts (\%) &          - &  7.26/7.25 &  7.25/7.24 \\
%\hline
Root Gap Closed (\%) &  7.11/4.17 & 48.37/48.28 & 48.44/48.35 \\
%\hline
Total Time (s) & 1685.39/634.75 & 1940.16/841.88 & 1541.35/547.29 \\
%\hline
B\&B Nodes & 6.6E+5/2.3E+5 & 7.8E+5/3.1E+5 & 6.3E+5/2.1E+5 \\
%\hline
\# Unsolved &         13 &         16 &         9 \\
%\hline
Unsolved Opt Gap (\%) &  0.21/0.19 &  0.22/0.21 &  0.21/0.19 \\
\hline
\end{tabular}  

\end{center}
\caption{Summary of results for Set 300\_5.}
\label{tb:300-5}
\end{table}

\begin{figure}[H]
\begin{center}
            \includegraphics[width=0.5\columnwidth]{300_5.eps}
\caption{Performance profile for Set 300\_5.} 
\label{fig:300-5}
\end{center}
\end{figure}

\begin{table}[H] 
\begin{center}
% Table generated by Excel2LaTeX from sheet 'Set 118_9G summary'
\small
\begin{tabular}{rccc}
\hline
           &    Default &      BasicCycles &       MoreCycles \\
\hline
   \# Cuts &          - &26.57/25.10 &126.82/100.49 \\
%\hline
Preprocessing Time (s) &          - &  0.08/0.05&  0.82/0.35 \\
%\hline
Gap Closed by Cuts (\%) &          - &  3.15/0 & 4.84/0 \\
%\hline
Root Gap Closed (\%) & 6.17/0 &18.13/0 & 24.79/0 \\
%\hline
Total Time (s) & 1278.94/235.82 & 1023.29/154.57 & 964.78/176.73 \\
%\hline
B\&B Nodes & 1.3E+6/2.1E+5 & 8.8E+5/1.3E+5 & 7.9E+5/1.3E+5 \\
%\hline
\# Unsolved &         43 &         33 &         21 \\
%\hline
Unsolved Opt Gap (\%) &  0.56/0.35 &  0.63/0.34 &  0.44/0.27 \\
\hline
\end{tabular}  

\end{center}
\caption{Summary of all the instances.}
\label{tb:all}
\end{table}

\begin{figure}[H]
\begin{center}
            \includegraphics[width=0.5\columnwidth]{overall.eps}
\caption{Performance profile for all the instances.} 
\label{fig:all}
\end{center}
\end{figure}

}
%%%%%%%%%%%%%%%%%%%%%%%%%%%%%%%%%%%%%%%%%%%%%

%\section{Conclusions}\label{section:conc}
%In this paper, we propose new cycle-based formulations for the optimal power flow problem and the optimal transmission switching problems that use the DC approximation to the power flow equations.  We characterize the convex hull of a cycle-induced substructure in the new formulation, which provides strong valid inequalities that we may add to improve the new formulation.  We demonstrate that separating the new inequalities may be done in linear time for a fixed cycle. 
%We conduct extensive experiments to show that the valid inequalities are very useful in reducing the size of the search tree and the computation time for the DC optimal transmission switching problem.

%The inequalities we derive may be gainfully employed for {\em any} power systems problem that involves the addition or removal of transmission lines and for which the DC approximation to power flow is sufficient for engineering purposes. We will pursue the application of these inequalities to other important power systems planning and operations problems as a future line of research.  Other future lines of research include the investigation of more complicated substructure of the new formulation, and engineering the cutting plane procedure to effectively solve larger-scale networks.  

\exclude{
As a future research direction, different substructures of the new formulation can be explored to further improve solution speed. Application of these techniques to real-size networks is another challenge. 

Engineering the cutting plane procedure to dynamically find cycles over which to separate is another challenge.
}

%Also, AC transmission switching is the main problem in this area although this seems a more challenging research problem.

 \section*{Acknowledgment}
{\bred 
  We would like to thank the reviewers for their constructive comments and in particular suggesting the experiments in Section 6.3. These have helped in significantly improving the paper.  
The work of authors Jeon, Linderoth, and Luedtke was supported in part by the U.S. Department of Energy, Office of Science, Office of Advanced Scientific Computing Research, Applied Mathematics program under contract number DE-AC02-06CH11357.
}

\appendix

\section{Cycle Basis Algorithm}\label{app:generatecycles}
%Note that in order this idea to work, we need to have a set of predetermined cycles to begin with. 

\exclude{
Consider a directed graph $G=(V, E)$ with vertex set $V$ and arc set $A$. Let $|V|=n$ and $|E|=m$.
We define edge-node incidence matrix $A$ as 
\begin{equation}  \label{DefineA}
A_{(i,j),k} = \begin{cases}
1 & \text{if } i=k \\
-1 & \text{if } j=k \\
0 &	\text{otherwise}
\end{cases}
\end{equation}
Then, assuming that $G$ is connected, Algorithm \ref{Cycle basis generation.} can be used to find a cycle basis.
\begin{algorithm}
\caption{Cycle basis generation.}
\label{Cycle basis generation.}
\begin{algorithmic}
\STATE Define edge-node incidence matrix $A$ of directed graph $G$ as given in (\ref{DefineA}).
\STATE Carry out LU decomposition of A with partial pivoting to compute $PA = LU$ with  a unit lower triangular matrix $L$.
\STATE Last $m - n + 1$ rows of $L^{-1}P$, denoted by $C_b$, gives a cycle basis.
\end{algorithmic}
\end{algorithm}
}

\begin{prop} 
	Algorithm \ref{Cycle basis generation.} works correctly.
\end{prop}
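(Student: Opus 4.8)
The plan is to recognize the algorithm as Gaussian elimination on the incidence matrix and to read off a fundamental cycle basis from the echelon form. First I would record the two standard linear-algebraic facts about $A\in\{0,\pm1\}^{m\times n}$ for a connected graph $G=(V,E)$: its rank is exactly $n-1$, and the cycle space of $G$ coincides with the left null space $\{v\in\mathbb{R}^m : v^\top A = 0\}$, which therefore has dimension $m-n+1$. The first holds because every row of $A$ sums to zero (so $\operatorname{rank}(A)\le n-1$), while the rows of any spanning tree supply $n-1$ independent rows; the second is the classical identification of the circuit space with the kernel of the incidence map, of the same flavor as the spanning-tree/fundamental-cycle argument already used in the equivalence proof of the two DC-OPF formulations.

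Next I would use the factorization directly. From $PA=LU$ we get $L^{-1}PA=U$. Since $U$ is upper triangular of the same rank $n-1$ as $A$, its nonzero rows are the first $n-1$ and its last $m-n+1$ rows vanish identically. Denoting by $v^{(n)},\dots,v^{(m)}$ the corresponding last $m-n+1$ rows of $M:=L^{-1}P$, the identity $MA=U$ gives $v^{(r)}A=0$ for each $r\ge n$, so every such row lies in the cycle space. Because $M$ is a product of invertible matrices it is nonsingular, so these $m-n+1$ rows are linearly independent; being independent vectors of the cycle space and equal in number to its dimension, they form a basis of it.

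The remaining and most delicate point is to argue that the output $C_b$ is a genuine cycle basis in the sense of the Definition, i.e. that each $v^{(r)}$ is the signed incidence vector of an actual cycle rather than merely some cycle-space vector. Here I would exploit the pivoting structure together with total unimodularity of $A$. Partial pivoting selects $n-1$ pivot rows whose edges are linearly independent and hence form a spanning tree $T$; the rows that reduce to zero in $U$ are exactly the $m-n+1$ non-tree edges. For such a non-tree edge $e_r$, the relation $v^{(r)}A=0$ together with $(L^{-1})_{rr}=1$ expresses the row of $e_r$ as a combination of strictly earlier rows, and because a row that has already been zeroed is never used as a subsequent pivot, only tree rows enter this combination. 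Thus the support of $v^{(r)}$ is $e_r$ together with edges of $T$, which is precisely the fundamental cycle of $e_r$ with respect to $T$; total unimodularity of $A$ forces the coefficients to lie in $\{0,\pm1\}$, so $v^{(r)}$ is indeed $\pm$ the incidence vector of that cycle. Since the fundamental cycles of a spanning tree are known to form a cycle basis, this simultaneously re-proves the basis property and certifies that the output consists of cycles.

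I expect the main obstacle to be exactly this last step: the linear-algebra core only yields a basis of the cycle \emph{space}, and promoting it to a basis of genuine cycles requires the combinatorial interpretation of the pivot rows as a spanning tree and the integrality supplied by total unimodularity. The one place demanding care is the bookkeeping of which rows may appear with nonzero coefficient in $(L^{-1})_r$ — in particular ruling out contributions from other already-zeroed (non-tree) rows — so that the support is confined to $T\cup\{e_r\}$ and the claimed fundamental-cycle structure is made precise.
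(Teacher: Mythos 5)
Your proof is correct, but it follows a genuinely different route from the paper's. The paper argues constructively through the elimination itself: it first shows that each Gaussian elimination step maps an incidence matrix to the incidence matrix of a new connected digraph (so no total unimodularity is invoked, and all intermediate entries stay in $\{0,\pm1\}$ by induction), then performs the factorization in two stages ($A_1=L_1U_1$ for the tree rows, $A_2=L_2U_1$ for the rest), interprets the rows of $L_1$ and $L_2$ as paths in an auxiliary tree $T'$ of a transformed graph $G'$, proves a separate claim that post-multiplication by $L_1$ maps paths in $T$ to paths in $T'$, exhibits $[-L_2 \;\; I]$ as a cycle basis of $G'$, and finally pulls everything back through $L_1^{-1}$ to obtain cycles in $G$. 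You instead dispense with the auxiliary graph entirely: the identification of the cycle space with the left kernel of $A$, the rank count, and the invertibility of $L^{-1}P$ already give you $m-n+1$ independent cycle-space vectors, hence a basis of the space; the only combinatorial work is then localized in your last step, where the pivot rows form a spanning tree, the block structure of $L^{-1}$ (identity in the trailing block, zeros above it) confines the support of each output row to $T\cup\{e_r\}$, and total unimodularity (or, even more simply, the fact that the cycle space of $T\cup\{e_r\}$ is one-dimensional and spanned by the fundamental cycle) forces $\{0,\pm1\}$ entries. Your version is more modular, avoids the paper's path-transformation claim, and yields the sharper conclusion that the output is precisely the fundamental cycle basis of the pivot spanning tree; the paper's version, in exchange, makes the $\pm1$ structure of every intermediate matrix explicit rather than appealing to TU. One small point worth making explicit in your write-up: pivots do exist in each of the first $n-1$ columns (any $n-1$ columns of $A$ are independent since the right kernel is spanned by the all-ones vector), which is what guarantees the echelon shape of $U$ that your argument starts from; the paper sidesteps this by simply assuming a convenient row order.
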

\begin{proof}
Without loss of generality, assume that the first $n-1$ rows of $A$ are selected such that no row permutation is necessary during LU decomposition. In the remaining of the proof, we will replace $PA$ with $A$ for brevity.

The LU decomposition of $A$ can be obtained by a sequence of Gaussian eliminations on $A$ as 
\begin{align*}
\tilde A_1 =\tilde L_1 \cdot A, \;\; \tilde A_2 = \tilde L_2 \cdot \tilde A_1, \dots, \;\; U  = \tilde A_{n-1} =\tilde L_{n-1} \cdot \tilde A_{n-1},
\end{align*}
where each matrix $\tilde L_i$ is an elementary row operation that adds or subtracts multiples of the $i$-th row of $\tilde A_{i-1}$ to other rows to make the $i$-th column of $\tilde A_{i-1}$ the $i$-th unit vector.  Consider a nonzero entry $a_{i1}$ of $A$. Since $a_{11}, a_{i1}\in \{+1, -1\}$, the row operation only adds $+1$ or $-1$ copy of row $1$ to row $i$, that is, the first column of $\tilde L_1$ only contains $0, \pm 1$. Also, after eliminating $a_{i1}$, row $i$ of $A_1$ will either be all zero, or contain exactly one $1$ and one $-1$. In other words, $A_{1}$ is an arc-node incidence matrix for a new digraph $G_1=(V,E_1)$.  Since rank$(A)=n-1$, we have rank$(\tilde A_1)=n-1$, which implies the new digraph $G_1$ is connected. Repeating this argument for each subsequent round of Gaussian elimination, we have that $U$ is an incidence matrix of the connected digraph $G_{n-1}$ with $n-1$ arcs, which implies $G_{n-1}$ is a spanning tree of the node set $V$. Denote the first $n-1$ rows of $U$ as $U_1$. The last $m-n+1$ rows of $U$ are zeros. 
%Also, the $i$-th column of $\tilde L_i$ only contains $0, \pm 1$. 

%The $L$ matrix is obtained as
%\begin{align*}
%L = (L_k \cdot L_{k-1} \cdot \dots \cdot L_1)^{-1} = L_1^{-1}\cdot L_2^{-1} \cdot \dots \cdot L_k^{-1},
%\end{align*}
%where $L_i^{-1}$ is obtained from $L_i$ by flipping the sign of each off-diagonal entry of $L_i$. This implies that $L_i^{-1}$ only contains $0,\pm 1$. The sequence of multiplications of $L_i^{-1}$'s ensures that $L$ has the property that the $i$-th column of $L$ is equal to the $i$-th column of $L_i^{-1}$, which means $L$ only contains $0, \pm 1$. 

Denote $A = \begin{bmatrix} A_1 \\ A_2 \end{bmatrix}$ where $A_1$ is the first $n - 1$ rows of  $A$ and represents a spanning tree ${T}$ in the original graph ${G}$. Note that the rows of $A_1$ are linearly independent.
Let us first carry out the LU decomposition of  $A_1$ to get $A_1 = L_1U_1$. In fact, $U_1$  represents a spanning tree, say ${T'}$, on a new graph ${G'}=(V, E')$. Note that the entries of $L_1$ are precisely the negative of the pivots in Gaussian elimination and hence, they are $\pm 1$. Moreover, we can interpret the rows of $L_1$ indexed by the edges in ${T}$ and columns indexed by the edges in ${T'}$. In particular, the elements of row $(i,j) \in {T} $ represent  the unique path in ${T'} $ going from $i$ to $j$.

\begin{clm} \label{path transform}
	A path in ${T}$ can be mapped to a path in  ${T'}$ by post-multiplication of $L_1$ and this transformation is unique.
\end{clm}
\begin{proof}
Let us consider a path $p$ in ${T}$ as a row vector where +1 (-1) means an arc is traversed in forward (backward) direction and 0 means that arc is not part of the path. Define  $p'=p L_1$.  We claim that the row vector $p'$ is a path in ${T'}$. Let us traverse the path $p$ in terms of the edges in ${T'}$. In particular, we weight the rows of $\mathcal{T}$ corresponding to $(i,j)$ with the value of that edge in the path $p$. In other words, for each arc $(i,j)$ in the path, we traverse the path from $i$ to $j$ in $\mathcal{T'}$. But, this gives a path in $\mathcal{T'}$. Finally, this transformation is unique since the path joining two nodes in a tree is unique.
\end{proof}

Now, consider  $A' = \begin{bmatrix} U_1 \\ A_2 \end{bmatrix}$. 
We continue LU decomposition on $A'$ to obtain  $A'=\begin{bmatrix} U_1 \\ A_2 \end{bmatrix}=\begin{bmatrix} I & 0 \\ L_2 & I \end{bmatrix}\begin{bmatrix} U_1 \\ 0 \end{bmatrix}$. In particular, we have $A_2 = L_2U_1$. Since the rows of $U_1$ are linearly independent and $U_1$ defines a tree, the elements of $A_2$ can be traced via a unique path in ${T'}$. In fact, the paths are exactly $L_2$ in the new network. If the paths in $L_2$ are traced backwards, we obtain cycles in ${G'}$.
Hence, $\begin{bmatrix} -L_2 & I \end{bmatrix}$ is a cycle basis in ${G'}$. 

At this point, we can write $A=LU$  
where
$$
L =\begin{bmatrix} L_1 & 0 \\ 0 & I \end{bmatrix}\begin{bmatrix} I & 0 \\ L_2 & I \end{bmatrix} \text{ and }
L^{-1} =\begin{bmatrix} L_1^{-1} & 0  \\ -L_2 L_1^{-1} & I \end{bmatrix} 
$$

Finally, we claim that $C_b = \begin{bmatrix} -L_2L_1^{-1} & I \end{bmatrix}$ is a cycle basis in ${G}$. Let us first focus on the system $L_2 = ML_1$. Recall that the rows of $L_2$ are paths in ${G'}$. We claim that the rows of $M$ are the corresponding paths in $G$.  Using Claim \ref{path transform}, we know that post-multiplication of a path in $G$ by $L_1$ gives a path in $G'$. But, since $L_1$ is invertible,  $M=L_2L_1^{-1}$ is the unique solution and therefore, the rows of $M$ should represent paths in ${G}$. Then, by tracing the  paths in $M$  backwards, we obtain cycles in ${G}$.
Therefore, $\begin{bmatrix} -L_2L_1^{-1} & I \end{bmatrix}$ is a cycle basis in ${G}$. 
%
%Given $A = LU$ (for simplicity, let us assume that row permutation is not necessary), we can write $L^{-1}A = U$. Therefore, rank$(U)$ =  Due to structure of LU decomposition, first $|\mathcal{B}| - 1$ rows of matrix $U$ are linearly independent and rest of the rows, that is last $|\mathcal{L}| - |\mathcal{B}| + 1$ rows,  are 0. In fact, we have $C_b A = 0$. Therefore, rows of $C_b$ are cycles. Also, they are linearly independent, since $C_b$ is a submatrix of invertible matrix $L^{-1}$. Therefore, $C_b$ gives a cycle basis.
\end{proof}

Note that we do not need to explicitly invert $L$ to obtain $L^{-1}$. In fact, LU decomposition produces $L^{-1}$. Hence, it is computationally efficient to find cycle basis using Algorithm \ref{Cycle basis generation.}.

\bibliographystyle{ormsv080}
\bibliography{references}

%\ECSwitch
%\ECHead{Electronic Companion}
%\input{basis}

\end{document}